\newtheorem{theorem}{Theorem}[section]
\newtheorem{lemma}[theorem]{Lemma}
\newtheorem{remark}[theorem]{Remark}
\newcommand{\bi}{\begin{itemize}}
\newcommand{\ei}{\end{itemize}}
\newcommand{\ba}{\begin{array}}
\newcommand{\ea}{\end{array}}
\newcommand{\ftol}{f_{\mathrm{tol}}}
\begin{document}

\title{\textbf{Active-set Newton-MR methods for nonconvex optimization problems with bound constraints}\thanks{This work has been partially supported by the Brazilian agencies FAPESP (grants 2013/07375-0, 2022/05803-3, 2023/08706-1, and 2024/22384-0) and CNPq (grant 302073/2022-1).}}

\author{
    Ernesto G. Birgin\thanks{Institute of Mathematics and Statistics, University of S\~ao Paulo, Rua do Mat\~ao, 1010, Cidade Universit\'aria, 05508-090, S\~ao Paulo, SP, Brazil (e-mail: egbirgin@ime.usp.br, diaulas@ime.usp.br).}
    \and
    Geovani N. Grapiglia\thanks{Université Catholique de Louvain, ICTEAM/INMA, Avenue Georges Lema\^{\i}tre, 4-6/L4.05.01, B-1348, Louvain-la-Neuve, Belgium (e-mail: geovani.grapiglia@uclouvain.be).}
    \and
    Diaulas S. Marcondes\footnotemark[2]
}

\date{August 25th, 2025}

\maketitle

\begin{abstract}
This paper presents active-set methods for minimizing nonconvex twice-continuously differentiable functions subject to bound constraints. Within the faces of the feasible set, we employ descent methods with Armijo line search, utilizing approximated Newton directions obtained through the Minimum Residual (MINRES) method. To escape the faces, we investigate the use of the Spectral Projected Gradient (SPG) method and a tailored variant of the Cubic Regularization of Newton's method for bound-constrained problems. We provide theoretical guarantees, demonstrating that when the objective function has a Lipschitz continuous gradient, the SPG-based method requires no more than $\mathcal{O}(n\epsilon^{-2})$ oracle calls to find $\epsilon$-approximate stationary points, where $n$ is the problem dimension. Furthermore, if the objective function also has a Lipschitz continuous Hessian, we show that the method based on cubic regularization requires no more than $\mathcal{O}\left(n|\log_{2}(\epsilon)|\epsilon^{-3/2}\right)$ oracle calls to achieve the same goal. We emphasize that, under certain hypotheses, the method achieves $O(\epsilon^{3/2})$ descent within the faces without resorting to cubic regularization. Numerical experiments are conducted to compare the proposed methods with existing active-set methods, highlighting the potential benefits of using MINRES instead of the Conjugate Gradient (CG) method for approximating Newton directions.\\

\noindent\textbf{Keywords:} Bound-constrained minimization, minimal residual method (MINRES), active-set methods, complexity, numerical experiments.
\end{abstract}

\section{Introduction} \label{sec1}

The optimization problem entails finding a solution that satisfies given constraints while minimizing a specified objective function. This type of problem has far-reaching applications in various fields, including psychology, medicine, economics, engineering, physics, and biology, to name a few. In this work, we focus on cases where the functions that define the feasible region and the objective function are continuous and differentiable. Among the methods developed to address such problems, augmented Lagrangians deserve special attention. Of the existing implementations, Algencan~\cite{abmstango,bmbook,bmcomper} is one of the most efficient and robust. It solves a sequence of bound-constrained problems using Gencan~\cite{bmgencan}. When Hessians are not available or available as an operator, Gencan computes approximate Newtonian directions using conjugate gradients. Recent studies~\cite{Orban2019,Liu2021,Liu2022,Liu2023} have highlighted the theoretical properties and practical performance of Newtonian methods for unconstrained minimization, which leverage the minimal residual method (MINRES)~\cite{Paige1975} to solve linear Newtonian systems. Motivated by these results, and having in mind subproblems in augmented Lagrangian methods, in this work, we propose Newton-MR methods for bound-constrained minimization.

The development of methods for minimization with bound constraints is a vibrant and dynamic field. During the past decades, numerous methods have been devised; see, for example, \cite{abmbetra,bmgencan,Facchinei2002,asacg,Heinkenschloss1999,Lin1999,lbfgsb3} and the references therein. A comprehensive numerical comparison of freely available software packages was presented in~\cite{bg}. The methods proposed here follow the active set paradigm. Specifically, we propose two distinct extensions of the Newton-MR method from \cite{Liu2023} for minimization with bound constraints. In \cite{Liu2023}, the method achieves a worst-case iteration complexity of $\mathcal{O}(\epsilon^{-3/2})$ to find a point $x$ such that $\|\nabla f(x) \| \leq \epsilon$ when minimizing $f(x)$ over $x \in \mathbb{R}^n$. Our first proposed extension, which introduces greater flexibility in algorithmic choices, takes no more than $\mathcal{O}(n\epsilon^{-2})$ calls to the oracle to find a point $x$ such that $\| P_\Omega( x - \nabla f(x) ) - x \| \leq \epsilon$ when applied to the problem of minimizing $f(x)$ subject to $x \in \Omega$, where $\Omega$ is the feasible set defined by the bound constraints. The second extension takes no more than $\mathcal{O}(n|\log_{2}(\epsilon)|\epsilon^{-3/2})$ calls to the oracle to achieve the same goal. Numerical experiments demonstrate that the first option is more robust and efficient compared to the second, using both unconstrained and bound-constrained minimization problems from the CUTEst collection \cite{cutest}. Additional experiments also show that the first version outperforms Gencan, particularly when second derivatives are available but matrix factorizations are not.

The remainder of this paper is organized as follows. The two Newton-MR-based methods for bound-constrained minimization and their theoretical properties are described in Section~\ref{sec2}. Numerical experiments are presented and analyzed in Section~\ref{sec3}. Conclusions and directions for future work are presented in the final section.

\section{Problem definition and new methods} \label{sec2}

Consider the problem
\[
\text{Minimize $f(x)$ subject to $x \in \Omega$,}
\]
\noindent where $\Omega=\left\{x\in\mathbb{R}^{n}\,:\,\ell\leq x\leq u\right\}$, with $\ell_{i}<u_{i}$ for $i=1,\dots,n$, and $f:\mathbb{R}^{n}\to\mathbb{R}$ being a twice continuously differentiable function. Specifically, let us assume that
\vspace{0.1cm}
\begin{mdframed}
\noindent\textbf{A1.} $\nabla f:\mathbb{R}^{n}\to\mathbb{R}^{n}$ is $L_{g}$-Lipschitz continuous.\\
\noindent\textbf{A2.} $f(\,\cdot\,)$ is bounded from below by $f_{\mathrm{low}}$. 
\end{mdframed}
\vspace{0.1cm}

Denoting $[n]:=\left\{1,\ldots,n\right\}$, for any $x\in\Omega$, define the sets $\mathcal{A}_{0}(x)=\{j\in [n] : x_{j}=\ell_{j}\}$, $\mathcal{A}_{1}(x)=\{j\in [n] : x_{j}=u_{j}\}$, $\mathcal{A}(x)=\mathcal{A}_{0}(x)\cup\mathcal{A}_{1}(x)$, and $\mathcal{I}(x)=\left\{1,\dots,n\right\}\setminus\mathcal{A}(x)$. The \textit{face} to which $x \in \Omega$ belongs is the set 
\[
\mathcal{F}(x)=\left\{z\in\Omega : z_{i}=\ell_{i} \text{ if } i\in\mathcal{A}_{0}(x), z_{i}=u_{i} \text{ if } i\in\mathcal{A}_{1}(x),\,\ell_{i}<z_{i}<u_{i} \text{ otherwise}\right\}. 
\]
The reduced gradient of $f$ at $x$ with respect to $\Omega$ is defined by 
\[
\nabla_{\Omega}f(x)=x-P_{\Omega}(x-\nabla f(x)),
\]
where $P_{\Omega}(\,\cdot\,)$ is the projection operator onto $\Omega$. Let $\nabla_{\Omega}^{I}f(x)\in\mathbb{R}^{n}$ be the vector defined by $\left[\nabla_{\Omega}^{I}f(x)\right]_{i} = [\nabla_{\Omega}f(x)]_{i}$, if $i\in \mathcal{I}(x)$, and $\left[\nabla_{\Omega}^{I}f(x)\right]_{i} = 0$, otherwise. Considering the enumeration $\mathcal{I}(x)=\left\{i_{1},\dots,i_{|\mathcal{I}(x)|}\right\}$, with $i_{j}<i_{j+1}$ for $j=1,\dots,|\mathcal{I}(x)|-1$ when $|\mathcal{I}(x)|\geq 2$, let us define $Q(x)=\left[e_{i_{1}}\,\,\dots\,\,e_{i_{|\mathcal{I}(x)|}}\right]\in\mathbb{R}^{n\times |\mathcal{I}(x)|}$, where $e_{i}$ denotes the $i$-th vector of the canonical basis of $\mathbb{R}^{n}$. In what follows, we will consider the function $f_{x}:\mathbb{R}^{|\mathcal{I}(x)|}\to\mathbb{R}$ given by
\begin{equation}
f_{x}(y)=f(x+Q(x)y).
\label{eq:fx}
\end{equation}
We have $\nabla f_{x}(y)=Q(x)^{T}\nabla f(x+Q(x)y)$ and $\nabla^{2}f_{x}(y)=Q(x)^{T}\nabla^{2}f(x+Q(x)y)Q(x)$,
and so 
\begin{equation}
\nabla f_{x}(0)=Q(x)^{T}\nabla f(x)\quad\text{and}\quad\nabla^{2} f_{x}(0)=Q(x)^{T}\nabla^{2} f(x)Q(x).
\label{eq:gx}
\end{equation}

\subsection{Method that employs SPG for leaving faces} \label{sec21}

Our first method (Algorithm 1) is inspired by Algorithm 4.1 in~\cite{BM}. In its $k$th iteration, we check whether the gradient norm in the current face, $\|\nabla_{\Omega}^{I} f(x^{k})\|$, is greater than or equal to a multiple of the reduced gradient norm, $\|\nabla_{\Omega} f(x^{k})\|$. If this condition holds, then $x^{k+1}$ is computed using Algorithm 2, which performs one iteration of a descent method on $f_{x^{k}}$, where the descent direction is obtained by MINRES. Otherwise, $x^{k+1}$ is computed by Algorithm 3, which applies a monotone iteration of the Spectral Projected Gradient (SPG) method~\cite{Birgin2000} to the minimization of $f$ in $\Omega$.

\begin{mdframed}
\noindent\textbf{Algorithm 1.} Active-Set Newton-MR method with SPG 
\\[0.2cm]
\noindent\textbf{Step 0.} Given $x^{0}\in\Omega$, $\epsilon>0$, $\theta \in (0,1]$, $\rho\in (0,1)$, $m\in\mathbb{N}$, $a_{1}\geq 1$, $a_2\in(0,1)$, $\lambda_{\max}^{\mathrm{spg}} \geq \lambda_{\min}^{\mathrm{spg}}>0$ be given. Set $k:=0$.
\\[0.2cm]
\noindent\textbf{Step 1.} If $\|\nabla_{\Omega}f(x^{k})\|\leq \epsilon$, STOP.
\\[0.2cm]
\noindent\textbf{Step 2.} If $\left\|\nabla_{\Omega}^{I}f(x^{k})\right\|\geq \theta \|\nabla_{\Omega}f(x^{k})\|$, compute $x^{k+1}$ by applying Algorithm~2 with parameters $x^k$, $\rho$, $m$, $a_1$, and $a_2$. Otherwise, compute $x^{k+1}$ by applying Algorithm~3 with parameters $x^k$, $\rho$, $\lambda_{\max}^{\mathrm{spg}}$, and $\lambda_{\min}^{\mathrm{spg}}$.
\\[0.2cm]
\noindent\textbf{Step 3.} Set $k:=k+1$ and go to Step 1.
\end{mdframed}

Within faces, we apply Algorithm 2, which is a descent method with Armijo line search and extrapolation. To obtain the search direction, we first compute an approximate solution $d_1^k$ to the minimum residual problem associated with the Newton system. This solution is then corrected to produce a direction $d^k$ satisfying conditions $\|d^{k}\| \leq a_{1} \|\nabla f_{x^{k}}(0)\|$ and $\langle\nabla f_{x^{k}}(0), d^{k} \rangle \leq -a_{2} \|\nabla f_{x^{k}}(0)\|^{2}$ (see Lemma 2.1).

\begin{mdframed}
\noindent\textbf{Algorithm 2.} MINRES-Based Descent Method
\\[0.2cm]
\noindent\textbf{Inputs:} $x^{k}\in\Omega$, $\rho\in (0,1)$, $m\in\mathbb{N}$, $a_{1}\geq 1$, $a_2\in(0,1)$.
\\[0.2cm]
\noindent\textbf{Step 1.} Using MINRES, compute an approximate solution $d_{1}^{k}$ of the subproblem
\[
\min_{s\in\mathbb{R}^{|\mathcal{I}(x^{k})|}}\|\nabla^{2}f_{x^{k}}(0)s+\nabla f_{x^{k}}(0)\|_{2}^{2}.
\]
Define 
\begin{equation} \label{eq:direction1}
d_{2}^{k}=\beta_{1}d_{1}^{k},
\end{equation}
where 
\begin{equation} \label{eq:beta1}
\text{if } \|d_{1}^{k}\|\leq a_{1}\|\nabla f_{x^{k}}(0)\| \text{ then } \beta_1=1 
\text{ else } \beta_1=a_{1} \|\nabla f_{x^{k}}(0)\| / \|d_{1}^{k}\|.
\end{equation}
and then set
\begin{equation} \label{eq:direction2}
d^{k}=\beta_{2}d_{2}^{k}+(1-\beta_{2})\left(-\nabla f_{x^{k}}(0)\right),
\end{equation}
where
\begin{equation} \label{eq:beta2}
\text{if } \langle \nabla f_{x^{k}}(0), d^{k}_{2} \rangle \leq -a_{2} \|\nabla f_{x^{k}}(0)\|^{2} \text{ then } \beta_2=1 \text{ else }
\beta_2 = ( 1-a_{2} ) / \left(1+\frac{\left\langle\nabla f_{x^{k}}(0),d_{2}^{k}\right\rangle}{\|\nabla f_{x^{k}}(0)\|^{2}}\right).
\end{equation}

\noindent\textbf{Step 2.1.} If $x^{k}+Q(x^{k})d^{k} \in \mathcal{F}(x^{k})$, set $\alpha_0=1$ and go to Step~3.
\\[0.2cm]
\noindent\textbf{Step 2.2.} If $f(P_{\Omega}(x^k+Q(x^{k})d^{k})) \leq f_{x^{k}}(0)$, then using at most $m$ oracle calls, find $x^{k+1} \in \partial \mathcal{F}(x^{k})$ such that $f(x^{k+1}) \leq f(P_{\Omega}(x^k+Q(x^k) d^k))$ and STOP.
\\[0.2cm]
\noindent\textbf{Step 2.3.} Compute $t_{\max}=\max\left\{t\in (0,1]\,:\,x^{k}+t Q(x^{k})d^{k}\in\Omega\right\}$. If $f_{x^k}(t_{\max} d^k) \leq f_{x^{k}}(0)$, using at most $m$ oracle calls, find $x^{k+1} \in \partial \mathcal{F}(x^{k})$ such that $f(x^{k+1}) \leq f_{x^k}(t_{\max} d^k)$ and STOP. Otherwise, set $\alpha_0 = t_{\max}$.
\\[0.2cm]
\noindent\textbf{Step 3.} Find the smallest nonnegative integer $\ell_{k}$ such that
\begin{equation} \label{eq:2.3}
f_{x^k}((0.5)^{\ell_{k}} \alpha_0 d^k) \leq f_{x^{k}}(0)+\rho (0.5)^{\ell_{k}} \alpha_0 \langle\nabla f_{x^{k}}(0),d^{k}\rangle.
\end{equation}

\noindent\textbf{Step 4.} If $\ell_k>0$, define $x^{k+1}=x^k+(0.5)^{\ell_{k}} \alpha_0 Q(x^k) d^k$. Otherwise, using at most $m$ oracle calls, find $x^{k+1} \in \overline{\mathcal{F}}(x^{k})$ such that $f(x^{k+1}) \leq f_{x^k}(\alpha_0 d^k)$.
\end{mdframed}

As a leaving-face algorithm, we use a monotone variant of the SPG method, which applies Armijo line search with the direction $v^{k} = P_{\Omega} \left( x^{k} - (1/\lambda_{k}^{\mathrm{spg}}) \nabla f(x^{k}) \right) - x^{k}$, where \(\lambda_{k}^{\mathrm{spg}}\) is the Barzilai-Borwein stepsize with safeguards.

\begin{mdframed}
\noindent\textbf{Algorithm 3.} Monotone SPG \cite{bmcomper}
\\[0.2cm]
\noindent\textbf{Inputs:} $x^{k}\in\Omega$, $\rho\in (0,1)$, $\lambda_{\max}^{\mathrm{spg}} \geq \lambda_{\min}^{\mathrm{spg}}>0$.
\\[0.2cm]
 \noindent\textbf{Step 1.} Choose $\lambda_{k}^{\mathrm{spg}} \in \left[\lambda_{\min}^{\mathrm{spg}}, \lambda_{\max}^{\mathrm{spg}}\right]$ and compute the solution $v^{k}$ to 
\[
\mathop{\min_{v\in\Omega\setminus\{x^{k}\}}} \nabla f(x^{k})^{T}v + (\lambda_{k}^{\mathrm{spg}}/2) \|v\|^{2}.
\]
\noindent\textbf{Step 2.} Find the smallest nonnegative integer $j_{k}$ such that
\[
f(x^{k}+(0.5)^{j_{k}} v^{k})\leq f(x^{k})+\rho (0.5)^{j_{k}}\langle\nabla f(x^{k}),v^{k}\rangle.
\]
\noindent\textbf{Step 3.} Set $x^{k+1}=x^{k}+(0.5)^{j_{k}}v^{k}$.
\end{mdframed}

Let us begin our analysis by focusing on the properties of Algorithm 2. Our first lemma establishes the descent properties of the search direction $d^{k}$.

\begin{lemma}
Let $d^{k}$ be defined by (\ref{eq:direction2}) and (\ref{eq:beta2}). Then
\begin{equation}
\|d^{k}\|\leq a_{1}\|\nabla f_{x^{k}}(0)\|\quad\text{and}\quad\langle\nabla f_{x^{k}}(0),d^{k}\rangle\leq -a_{2}\|\nabla f_{x^{k}}(0)\|^{2}.
\label{eq:2.2}
\end{equation}
\label{lem:extra1}
\end{lemma}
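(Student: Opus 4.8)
Write $g := \nabla f_{x^k}(0)$ throughout. The plan is to follow the two–stage construction \eqref{eq:direction1}--\eqref{eq:beta2} and verify the two inequalities branch by branch. First I would dispose of the degenerate case $g = 0$: then $a_1\|g\| = 0$, so in \eqref{eq:beta1} either $d_1^k = 0$ (first branch, $\beta_1 = 1$) or $\beta_1 = a_1\|g\|/\|d_1^k\| = 0$, and in both cases $d_2^k = 0$; consequently $\langle g, d_2^k\rangle = 0 = -a_2\|g\|^2$, so $\beta_2 = 1$ in \eqref{eq:beta2} and $d^k = d_2^k = 0$, which makes \eqref{eq:2.2} hold trivially. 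From here on I assume $g \neq 0$.

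Next I would record the elementary observation that $\|d_2^k\| \le a_1\|g\|$ in all cases: if the first branch of \eqref{eq:beta1} applies then $d_2^k = d_1^k$ and the bound is precisely the branch hypothesis; otherwise $\beta_1 = a_1\|g\|/\|d_1^k\|$ and $\|d_2^k\| = \beta_1\|d_1^k\| = a_1\|g\|$. Now I split according to the test in \eqref{eq:beta2}. If $\langle g, d_2^k\rangle \le -a_2\|g\|^2$, then $\beta_2 = 1$, hence $d^k = d_2^k$, and both inequalities in \eqref{eq:2.2} follow at once from the previous observation and the branch hypothesis. Otherwise $\langle g, d_2^k\rangle > -a_2\|g\|^2$; writing $c := \langle g, d_2^k\rangle/\|g\|^2$ we then have $c > -a_2 > -1$, so $1 + c > 1 - a_2 > 0$. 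This shows that $\beta_2 = (1-a_2)/(1+c)$ is well defined, that $\beta_2 > 0$, and that $\beta_2 < 1$ (since the denominator strictly exceeds the numerator).

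For the inner product in this last case, substituting $d^k = \beta_2 d_2^k - (1-\beta_2)g$ and $\langle g, d_2^k\rangle = c\|g\|^2$ gives
\[
\langle g, d^k\rangle = \beta_2 c\|g\|^2 - (1-\beta_2)\|g\|^2 = \|g\|^2\bigl(\beta_2(1+c) - 1\bigr) = \|g\|^2\bigl((1-a_2) - 1\bigr) = -a_2\|g\|^2,
\]
so the second inequality in \eqref{eq:2.2} holds with equality. For the norm bound, the triangle inequality together with $\beta_2 \in (0,1)$, the bound $\|d_2^k\| \le a_1\|g\|$, and $a_1 \ge 1$ yields
\[
\|d^k\| \le \beta_2\|d_2^k\| + (1-\beta_2)\|g\| \le \beta_2 a_1\|g\| + (1-\beta_2)a_1\|g\| = a_1\|g\|.
\]
The only step that needs genuine attention is checking that the denominator $1 + \langle g, d_2^k\rangle/\|g\|^2$ defining $\beta_2$ is strictly positive and that $\beta_2 \le 1$; both follow immediately from $a_2 \in (0,1)$ and the case hypothesis $\langle g, d_2^k\rangle > -a_2\|g\|^2$, so I do not expect any real obstacle — the argument is essentially a direct substitution.
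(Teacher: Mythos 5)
Your proof is correct and follows essentially the same route as the paper's: establish $\|d_2^k\|\leq a_1\|\nabla f_{x^k}(0)\|$ from \eqref{eq:beta1}, then split on the test in \eqref{eq:beta2}, showing $0<\beta_2<1$ and computing $\langle\nabla f_{x^k}(0),d^k\rangle=-a_2\|\nabla f_{x^k}(0)\|^2$ exactly, with the norm bound via the triangle inequality and $a_1\geq 1$. Your extra treatment of the degenerate case $\nabla f_{x^k}(0)=0$ is a harmless addition (in the algorithm this case cannot occur, since Algorithm~2 is only invoked when $\|\nabla_\Omega^I f(x^k)\|\geq\theta\|\nabla_\Omega f(x^k)\|>\epsilon$ and $\|\nabla f_{x^k}(0)\|\geq\|\nabla_\Omega^I f(x^k)\|$).
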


\begin{proof}
By (\ref{eq:direction1}) and (\ref{eq:beta1}), we have
\begin{equation} \label{eq:luke}
\|d_{2}^{k}\|\leq a_{1}\|\nabla f_{x^{k}}(0)\|.
\end{equation}
Let us now analyze $d^{k}$ defined by (\ref{eq:direction2}) and (\ref{eq:beta2}). If $\left\langle\nabla f_{x^{k}}(0),d_{2}^{k}\right\rangle\leq-a_{2}\|\nabla f_{x^{k}}(0)\|^{2}$ then by (\ref{eq:beta2}) we have $\beta_{2}=1$. Thus, in view of (\ref{eq:luke}), we obtain
\[
\|d^{k}\|=\|d_{2}^{k}\|\leq a_{1}\|\nabla f_{x^{k}}(0)\|\;\text{and}\;\left\langle\nabla f_{x^{k}}(0),d^{k}\right\rangle=\left\langle\nabla f_{x^{k}}(0),d_{2}^{k}\right\rangle\leq -a_{2}\|\nabla f_{x^{k}}(0)\|^{2},
\]
that is (\ref{eq:2.2}) holds. Suppose now that $\left\langle\nabla f_{x^{k}}(0),d_{2}^{k}\right\rangle>-a_{2}\|\nabla f_{x^{k}}(0)\|^{2}$. Then we have
\[
1 + \langle \nabla f_{x^{k}}(0),d_{2}^{k} \rangle / \| \nabla f_{x^{k}}(0) \|^{2} > 1 - a_{2} > 0,
\]
and so, by (\ref{eq:beta2}), 
\[
0<\beta_{2}=(1-a_{2}) / \left(1+\langle\nabla f_{x^{k}}(0),d_{2}^{k}\rangle / \|\nabla f_{x^{k}}(0)\|^{2}\right) <1.
\]
Consequently, it follows from (\ref{eq:direction2}) and (\ref{eq:luke}) that
\[
\|d^{k}\|\leq\beta_{2}\|d_{2}^{k}\|+(1-\beta_{2})\|\nabla f_{x^{k}}(0)\|\leq a_{1}\|\nabla f_{x^{k}}(0)\|.
\]
In addition, by (\ref{eq:direction2}) and (\ref{eq:beta2}), defining $\nu_k = 1+\langle\nabla f_{x^{k}}(0),d_{2}^{k}\rangle / \|\nabla f_{x^{k}}(0)\|^{2}$ and $\xi_k = \|\nabla f_{x^{k}}(0)\|^{2}+\left\langle\nabla f_{x^{k}}(0),d_{2}^{k}\right\rangle$, we also have
\begin{eqnarray*}
\langle\nabla f_{x^{k}}(0),d^{k}\rangle
&=& \beta_{2}\langle\nabla f_{x^{k}}(0),d_{2}^{k}\rangle-(1-\beta_{2})\|\nabla f_{x^{k}}(0)\|^{2}\\
&=&\left((1-a_{2})\left\langle\nabla f_{x^{k}}(0),d_{2}^{k}\right\rangle-\left(\nu_k-(1-a_{2})\right)\|\nabla f_{x^{k}}(0)\|^{2}\right) / \nu_k\\
&=&\left(-a_{2}\left\langle\nabla f_{x^{k}}(0),d_{2}^{k}\right\rangle-a_{2}\|\nabla f_{x^{k}}(0)\|^{2}\right) / \nu_k\\
&=&\left((-a_{2} \xi_k ) / \xi_k \right) \|\nabla f_{x^{k}}(0)\|^{2} = -a_{2}\|\nabla f_{x^{k}}(0)\|^{2}.
\end{eqnarray*}
Therefore, (\ref{eq:2.2}) also holds in this case.
\end{proof}

Combining Lemma 2.1 and the Lipschitz continuity of $\nabla f$ (assumption A1), the next lemma provides a positive lower bound for stepsizes that do not satisfy the Armijo condition.

\begin{lemma}
\label{lem:2.0}
Suppose that A1 holds. Given $\alpha>0$, if 
\begin{equation}
f_{x^{k}}(\alpha d^{k})>f_{x^{k}}(0)+\rho\alpha\langle\nabla f_{x^{k}}(0),d^{k}\rangle
\label{eq:2.4extra}
\end{equation}
then $\alpha > 2(1-\rho)a_{2} / (L_{g}a_{1}^{2})$.
\end{lemma}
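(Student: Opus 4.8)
The plan is to combine the quadratic upper bound furnished by assumption~A1 with the descent properties of $d^{k}$ from Lemma~\ref{lem:extra1}, and then read off the bound on $\alpha$ by negating the Armijo test.

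First I would note that, since $Q(x^{k})$ has orthonormal columns, the gradient map $\nabla f_{x^{k}}(y)=Q(x^{k})^{T}\nabla f(x^{k}+Q(x^{k})y)$ is again $L_{g}$-Lipschitz continuous on $\mathbb{R}^{|\mathcal{I}(x^{k})|}$. Hence the standard descent lemma gives, for every $\alpha>0$,
\[
f_{x^{k}}(\alpha d^{k})\leq f_{x^{k}}(0)+\alpha\langle\nabla f_{x^{k}}(0),d^{k}\rangle+\frac{L_{g}\alpha^{2}}{2}\|d^{k}\|^{2}.
\]
Next I would dispose of the degenerate case $\nabla f_{x^{k}}(0)=0$: in that case $\|d^{k}\|\leq a_{1}\|\nabla f_{x^{k}}(0)\|=0$ forces $d^{k}=0$, so both sides of (\ref{eq:2.4extra}) equal $f_{x^{k}}(0)$ and the strict inequality (\ref{eq:2.4extra}) cannot hold. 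Thus under the hypothesis of the lemma we may assume $\nabla f_{x^{k}}(0)\neq 0$, and Lemma~\ref{lem:extra1} then yields $\langle\nabla f_{x^{k}}(0),d^{k}\rangle\leq-a_{2}\|\nabla f_{x^{k}}(0)\|^{2}<0$.

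Then I would chain the descent-lemma estimate with the failure of the Armijo condition (\ref{eq:2.4extra}) to obtain
\[
\rho\alpha\langle\nabla f_{x^{k}}(0),d^{k}\rangle<\alpha\langle\nabla f_{x^{k}}(0),d^{k}\rangle+\frac{L_{g}\alpha^{2}}{2}\|d^{k}\|^{2},
\]
which rearranges to $(1-\rho)\alpha\bigl(-\langle\nabla f_{x^{k}}(0),d^{k}\rangle\bigr)<\tfrac{L_{g}\alpha^{2}}{2}\|d^{k}\|^{2}$. Applying $-\langle\nabla f_{x^{k}}(0),d^{k}\rangle\geq a_{2}\|\nabla f_{x^{k}}(0)\|^{2}$ on the left and $\|d^{k}\|^{2}\leq a_{1}^{2}\|\nabla f_{x^{k}}(0)\|^{2}$ on the right (both from Lemma~\ref{lem:extra1}), dividing through by the strictly positive quantity $\alpha\|\nabla f_{x^{k}}(0)\|^{2}$, and isolating $\alpha$ gives $\alpha>2(1-\rho)a_{2}/(L_{g}a_{1}^{2})$, as claimed.

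There is no real obstacle in this argument; the only points that need a touch of care are checking that composition with $Q(x^{k})$ preserves the Lipschitz constant $L_{g}$ (so the descent lemma applies to $f_{x^{k}}$ directly) and explicitly ruling out the null-gradient case so that the final division by $\alpha\|\nabla f_{x^{k}}(0)\|^{2}$ is justified.
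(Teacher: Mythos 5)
Your proposal is correct and follows essentially the same route as the paper: apply the descent lemma to $f_{x^{k}}$ (using that $Q(x^{k})$ has orthonormal columns so the Lipschitz constant $L_{g}$ is preserved), combine it with the negated Armijo inequality, and invoke the bounds of Lemma~\ref{lem:extra1} to isolate $\alpha$. Your explicit handling of the degenerate case $\nabla f_{x^{k}}(0)=0$ is a minor extra precaution that the paper leaves implicit, but it does not change the argument.
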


\begin{proof}
Using the definition of $f_{x^{k}}$ in (\ref{eq:fx}), we see that inequality (\ref{eq:2.4extra}) is equivalent to
\begin{equation}
\rho \, \alpha\langle\nabla f_{x^{k}}(0),d^{k}\rangle+f(x^{k})<f\left(x^{k}+\alpha Q(x^{k})d^{k}\right).
\label{eq:2.8}
\end{equation}
Notice that $Q(x^{k})^{T}Q(x^{k})=Id$. Thus, by assumption A1 we also have
\begin{equation} \label{eq:2.9}
f(x^{k}+\alpha Q(x^{k})d^{k}) \leq
f(x^{k})+\alpha[\langle\nabla f_{x^{k}}(0),d^{k}\rangle+(L_{g}/2)\alpha\|d^{k}\|^{2}].
\end{equation}

From (\ref{eq:2.8}) and (\ref{eq:2.9}), it follows that $\rho \langle \nabla f_{x^{k}}(0), d^{k} \rangle < \langle \nabla f_{x^{k}}(0), d^{k} \rangle + (L_{g}/2) \alpha \|d^{k}\|^{2}$. Therefore, by Lemma \ref{lem:extra1}, we conclude that
\[
\alpha > \left( 2(1-\rho) / L_{g} \right) \left(-\langle \nabla f_{x^{k}}(0), d^{k} \rangle / \|d^{k}\|^{2} \right) \geq 2(1-\rho)a_{2} / (L_{g}a_{1}^{2}).
\]
\end{proof}

In view of Lemmas 2.1 and 2.2, if $x^{k+1}$ is computed by Algorithm 2, then the objective function decreases by at least a multiple $\|\nabla_{\Omega}^{I}f(x^{k})\|^{2}$.

\begin{lemma}
\label{lem:2.1}
Suppose that A1 holds. Then, whenever $x^{k+1}$ is computed by Step 4 of Algorithm 2, we have
\begin{equation}
f(x^{k})-f(x^{k+1})\geq\rho\min\left\{a_{2},(1-\rho)a_{2}^{2}/(L_{g}a_{1}^{2})\right\}\left\|\nabla_{\Omega}^{I}f(x^{k})\right\|^{2}.
\label{eq:2.5}
\end{equation}
Moreover, the number of evaluations of $f$ necessary to guarantee the fulfillment of~(\ref{eq:2.3}) is bounded from above by $\left|\log_{2}\left(\min\left\{1/2,(1-\rho)a_{2}/(2L_{g}a_{1}^{2})\right\}\right)\right|$.
\end{lemma}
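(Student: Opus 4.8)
The plan is to prove the descent inequality~(\ref{eq:2.5}) and the evaluation bound separately, in both cases reducing matters to Lemmas~\ref{lem:extra1} and~\ref{lem:2.0} together with the Armijo test~(\ref{eq:2.3}). As a preliminary observation I would record that $\|\nabla_{\Omega}^{I}f(x^{k})\|\leq\|\nabla f_{x^{k}}(0)\|$: by~(\ref{eq:gx}) one has $\|\nabla f_{x^{k}}(0)\|^{2}=\sum_{i\in\mathcal{I}(x^{k})}[\nabla f(x^{k})]_{i}^{2}$, while for $i\in\mathcal{I}(x^{k})$ one has $\ell_{i}<x_{i}^{k}<u_{i}$ and $[\nabla_{\Omega}f(x^{k})]_{i}=x_{i}^{k}-\mathrm{median}(\ell_{i},x_{i}^{k}-[\nabla f(x^{k})]_{i},u_{i})$; examining the three cases for the median yields $|[\nabla_{\Omega}f(x^{k})]_{i}|\leq|[\nabla f(x^{k})]_{i}|$, hence the claimed inequality. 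Thus it suffices to prove~(\ref{eq:2.5}) with $\|\nabla f_{x^{k}}(0)\|^{2}$ in place of $\|\nabla_{\Omega}^{I}f(x^{k})\|^{2}$.

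For the descent estimate I would split on the outcome of Step~4. If $\ell_{k}>0$, then $x^{k+1}=x^{k}+(0.5)^{\ell_{k}}\alpha_{0}Q(x^{k})d^{k}$, and~(\ref{eq:2.3}) together with Lemma~\ref{lem:extra1} gives $f(x^{k})-f(x^{k+1})\geq\rho a_{2}(0.5)^{\ell_{k}}\alpha_{0}\|\nabla f_{x^{k}}(0)\|^{2}$; since $\ell_{k}\geq1$, the stepsize $(0.5)^{\ell_{k}-1}\alpha_{0}$ fails~(\ref{eq:2.3}), so Lemma~\ref{lem:2.0} forces $(0.5)^{\ell_{k}}\alpha_{0}>(1-\rho)a_{2}/(L_{g}a_{1}^{2})$ and hence $f(x^{k})-f(x^{k+1})\geq\rho(1-\rho)a_{2}^{2}/(L_{g}a_{1}^{2})\|\nabla f_{x^{k}}(0)\|^{2}$. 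If $\ell_{k}=0$, I would first note that Step~3 is then necessarily entered from Step~2.1, so $\alpha_{0}=1$: had Step~3 been entered from Step~2.3 we would have $f_{x^{k}}(\alpha_{0}d^{k})>f_{x^{k}}(0)$, which contradicts~(\ref{eq:2.3}) at $\ell=0$ because its right-hand side is $\leq f_{x^{k}}(0)$ by Lemma~\ref{lem:extra1}. With $\alpha_{0}=1$, the defining inequality for $x^{k+1}$ in Step~4 combined with~(\ref{eq:2.3}) at $\ell=0$ and Lemma~\ref{lem:extra1} gives $f(x^{k})-f(x^{k+1})\geq\rho a_{2}\|\nabla f_{x^{k}}(0)\|^{2}$. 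Taking the worse of the two bounds and invoking the preliminary observation yields~(\ref{eq:2.5}).

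For the evaluation count, the number of evaluations of $f$ spent in Step~3 to certify~(\ref{eq:2.3}) equals $\ell_{k}+1$, one for each $\ell\in\{0,1,\dots,\ell_{k}\}$. If $\ell_{k}=0$ this is $1\leq|\log_{2}(1/2)|\leq|\log_{2}(\min\{1/2,(1-\rho)a_{2}/(2L_{g}a_{1}^{2})\})|$, since the minimum is at most $1/2$. If $\ell_{k}\geq1$, then Lemma~\ref{lem:2.0} applied to $(0.5)^{\ell_{k}-1}\alpha_{0}$ and $\alpha_{0}\leq1$ give $(0.5)^{\ell_{k}-1}>2(1-\rho)a_{2}/(L_{g}a_{1}^{2})$, so $\ell_{k}+1<2-\log_{2}\bigl(2(1-\rho)a_{2}/(L_{g}a_{1}^{2})\bigr)=-\log_{2}\bigl((1-\rho)a_{2}/(2L_{g}a_{1}^{2})\bigr)$; moreover $\ell_{k}\geq1$ forces $2(1-\rho)a_{2}/(L_{g}a_{1}^{2})<1$ (otherwise the stepsize $\alpha_{0}\leq1$ would already satisfy~(\ref{eq:2.3})), so the minimum equals $(1-\rho)a_{2}/(2L_{g}a_{1}^{2})<1$ and the integer $\ell_{k}+1$ is bounded by $|\log_{2}(\min\{1/2,(1-\rho)a_{2}/(2L_{g}a_{1}^{2})\})|$.

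The only genuinely delicate point is the $\ell_{k}=0$ branch of the descent estimate: one must notice that this branch cannot occur with $\alpha_{0}=t_{\max}$, since arriving at Step~3 from Step~2.3 already certifies $f_{x^{k}}(\alpha_{0}d^{k})>f_{x^{k}}(0)$ and thus $\ell_{k}\geq1$; without this, a small $t_{\max}$ would destroy the bound. Everything else is routine bookkeeping with Lemmas~\ref{lem:extra1} and~\ref{lem:2.0} and the elementary coordinatewise comparison between reduced and projected gradients.
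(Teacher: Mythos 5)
Your proposal is correct and follows essentially the same route as the paper: Lemma \ref{lem:extra1} for the direction, Lemma \ref{lem:2.0} applied to the failed stepsize, and the observation that reaching Step 3 via Step 2.3 means the Armijo inequality already fails at $t_{\max}$ (you use this to rule out $\ell_k=0$ with $\alpha_0=t_{\max}$, while the paper uses it to bound $\alpha_0$ from below — the same ingredient, packaged differently). The only addition is your explicit coordinatewise verification of $\|\nabla_{\Omega}^{I}f(x^{k})\|\leq\|\nabla f_{x^{k}}(0)\|$, which the paper asserts without proof; this is correct and harmless.
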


\begin{proof}
First, let us show that 
\begin{equation}
(0.5)^{\ell_{k}}\alpha_{0}\geq\min\left\{\alpha_{0}, (1-\rho)a_{2} / (L_{g}a_{1}^{2})\right\}.
\label{eq:2.7extra}
\end{equation}
If $\ell_{k}=0$, then (\ref{eq:2.7extra}) is clearly true. Thus, let us assume that $\ell_{k}>0$. In this case, by the definition of $\ell_{k}$, it follows that
\[
f_{x^{k}}\left((0.5)^{\ell_{k}-1}\alpha_{0}d^{k}\right)>f_{x^{k}}(0)+\rho (0.5)^{\ell_{k}-1}\alpha_{0}\langle\nabla f_{x^{k}}(0),d^{k}\rangle.
\]
Thus, by Lemma \ref{lem:extra1}, we have
$(0.5)^{\ell_{k}-1}\alpha_{0}>2(1-\rho)a_{2}/(L_{g}a_{1}^{2})$,
which implies that (\ref{eq:2.7extra}) also holds when $\ell_{k}>0$. Now, let us refine the lower bound in (\ref{eq:2.7extra}). In view of Steps 2.1 and 2.3 of Algorithm 2, we have $\alpha_{0}=1$ or $\alpha_{0}=t_{\max}\leq 1$. The latter occurs only if $f_{x^{k}}(t_{\max}d^{k})>f_{x^{k}}(0)$. In particular, this means that inequality (\ref{eq:2.4extra}) holds for $\alpha=t_{\max}$. Consequently, by Lemma \ref{lem:2.0}, in this case we must have $\alpha_{0}=t_{\max}>2(1-\rho)a_{2}/(L_{g}a_{1}^{2})$. Therefore, in any case, we have
\begin{equation}
\alpha_{0}\geq\min\left\{1,2(1-\rho)a_{2}/(L_{g}a_{1}^{2})\right\}.
\label{eq:2.8extra}
\end{equation}
Combining (\ref{eq:2.7extra}) and (\ref{eq:2.8extra}), we obtain
\begin{equation}
(0.5)^{\ell_{k}}\alpha_{0}\geq\min\left\{1,(1-\rho)a_{2}/(L_{g}a_{1}^{2})\right\}.
\label{eq:2.7}
\end{equation}
Now, from Steps 3 and 4 of Algorithm 2, we obtain
\begin{eqnarray*}
f(x^{k})-f(x^{k+1})
&\geq &\rho (0.5)^{\ell_{k}}\alpha_{0}\left(-\langle\nabla f_{x^{k}}(0),d^{k}\rangle\right)\geq \rho (0.5)^{\ell_{k}}\alpha_{0}a_{2}\|\nabla f_{x^{k}}(0)\|^{2}\\
& = & \rho (0.5)^{\ell_{k}}\alpha_{0}a_{2}\left\|Q(x^{k})^{T}\nabla f(x^{k})\right\|^{2}\geq \rho (0.5)^{\ell_{k}}\alpha_{0}a_{2}\left\|\nabla_{\Omega}^{I}f(x^{k})\right\|^{2}\\
&\geq &\rho\min\left\{1,(1-\rho)a_{2}/(L_{g}a_{1}^{2})\right\}a_{2}\left\|\nabla_{\Omega}^{I}f(x^{k})\right\|^{2},
\end{eqnarray*}
that is, (\ref{eq:2.5}) is true. Finally, notice that the number of evaluations of $f$ performed at Step 3 of Algorithm 2 is equal to $\ell_{k}+1$. From (\ref{eq:2.7}), we have
\[
(0.5)^{\ell_{k}+1}\geq (0.5)^{\ell_{k}+1}\alpha_{0}\geq\min\left\{1/2,(1-\rho)a_{2}/(2L_{g}a_{1}^{2})\right\}.
\]
Then, taking the logarithm on both sides, it follows that
\[
\ell_{k}+1 \leq \left|\log_{2}\left( \min\left\{1/2,(1-\rho)a_{2}/(2L_{g}a_{1}^{2})\right\} \right)\right|.
\]
\end{proof}

If $x^{k+1}$ is computed by Algorithm 3, then the objective function decreases by at least a constant multiple of $\|\nabla_{\Omega} f(x^{k})\|^{2}$.

\begin{lemma}
\label{lem:2.2} (Theorem 4.2 in \cite{bmcomper}) Suppose that A1 holds. Then, whenever $x^{k+1}$ is computed by Algorithm 3, we have
\[
f(x^{k})-f(x^{k+1})\geq \rho(1-2\rho) / (8L_{g}) \min\left\{\lambda_{\min}^{\mathrm{spg}}, \lambda_{\min}^{\mathrm{spg}} / \lambda_{\max}^{\mathrm{spg}} \right\}^{2}\left\|\nabla_{\Omega} f(x^{k})\right\|^{2}.
\]
Moreover, the number of evaluations of $f$ necessary to guarantee the fulfillment of (\ref{eq:2.3}) is bounded from above by $\left|\log_{2}\left(\min\left\{1,(1-2\rho)\lambda_{\min}^{\mathrm{spg}}/(4L_{g})\right\}\right)\right|+1$.
\end{lemma}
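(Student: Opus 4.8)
Since the statement is recorded as Theorem~4.2 of~\cite{bmcomper}, the most economical route is to quote that reference directly; nevertheless, the argument I would reconstruct runs in close parallel with the treatment of Algorithm~2 in Lemmas~\ref{lem:extra1}--\ref{lem:2.1}, and rests on two structural facts about the direction $v^{k}$ produced in Step~1 of Algorithm~3. Writing $\lambda:=\lambda_{k}^{\mathrm{spg}}\in[\lambda_{\min}^{\mathrm{spg}},\lambda_{\max}^{\mathrm{spg}}]$ and recalling that $v^{k}=P_{\Omega}\!\left(x^{k}-(1/\lambda)\nabla f(x^{k})\right)-x^{k}$, these facts are: (i) a sufficient-angle bound $\langle\nabla f(x^{k}),v^{k}\rangle\leq-(\lambda/2)\|v^{k}\|^{2}$; and (ii) a lower bound $\|v^{k}\|\geq\min\{1,1/\lambda_{\max}^{\mathrm{spg}}\}\,\|\nabla_{\Omega}f(x^{k})\|$ relating the SPG displacement to the reduced gradient.

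For (i) I would compare the value of the (displacement-form) quadratic model minimized in Step~1 at the optimal $v^{k}$ with its value at the null step, which immediately gives $\langle\nabla f(x^{k}),v^{k}\rangle+(\lambda/2)\|v^{k}\|^{2}\leq0$; a sharper version with constant $\lambda$ in place of $\lambda/2$ follows from the variational characterization of the projection tested at $z=x^{k}\in\Omega$, but the weaker form suffices and matches the constants in~\cite{bmcomper}. For (ii) I would invoke the classical monotonicity of the projected-gradient displacement: the map $t\mapsto\|P_{\Omega}(x^{k}-t\nabla f(x^{k}))-x^{k}\|$ is nondecreasing while $t\mapsto\|P_{\Omega}(x^{k}-t\nabla f(x^{k}))-x^{k}\|/t$ is nonincreasing on $(0,\infty)$; comparing $t=1/\lambda$ with $t=1$ (the latter producing $\|\nabla_{\Omega}f(x^{k})\|$) in the two regimes $\lambda\leq1$ and $\lambda\geq1$ yields (ii).

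Next I would establish the lower bound on the accepted stepsize, which is the exact analogue of Lemma~\ref{lem:2.0} for the line search in Step~2 of Algorithm~3: if $j_{k}>0$, the sufficient-decrease test fails at $(0.5)^{j_{k}-1}$, and combining this failure with the descent inequality implied by A1 and with fact~(i) produces a bound of the form $(0.5)^{j_{k}}\geq(1-2\rho)\lambda_{\min}^{\mathrm{spg}}/(4L_{g})$ (the precise numerical factor, and the implicit requirement $\rho<1/2$, being dictated by the form of the sufficient-decrease rule adopted in~\cite{bmcomper}); together with the trivial case $j_{k}=0$ this gives $(0.5)^{j_{k}}\geq\min\{1,(1-2\rho)\lambda_{\min}^{\mathrm{spg}}/(4L_{g})\}$. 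Taking $\log_{2}$ of both sides bounds $j_{k}$, hence the number $j_{k}+1$ of evaluations of $f$, by $\left|\log_{2}\!\left(\min\{1,(1-2\rho)\lambda_{\min}^{\mathrm{spg}}/(4L_{g})\}\right)\right|+1$, which is the claimed evaluation count. For the decrease, Steps~2 and~3 of Algorithm~3 together with facts~(i)--(ii) give
\[
f(x^{k})-f(x^{k+1})\geq\rho(0.5)^{j_{k}}\bigl(-\langle\nabla f(x^{k}),v^{k}\rangle\bigr)\geq\frac{\rho}{2}(0.5)^{j_{k}}\lambda_{\min}^{\mathrm{spg}}\min\{1,1/\lambda_{\max}^{\mathrm{spg}}\}^{2}\|\nabla_{\Omega}f(x^{k})\|^{2},
\]
and substituting the stepsize bound, so that one of the two factors $\lambda_{\min}^{\mathrm{spg}}$ combines with $\min\{1,1/\lambda_{\max}^{\mathrm{spg}}\}^{2}$ to reconstitute $\min\{\lambda_{\min}^{\mathrm{spg}},\lambda_{\min}^{\mathrm{spg}}/\lambda_{\max}^{\mathrm{spg}}\}^{2}$ while the remaining numerical constants collapse to $\rho(1-2\rho)/(8L_{g})$, yields the asserted inequality.

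The step I expect to be the main obstacle is not any single estimate — the line-search bound is a direct adaptation of Lemma~\ref{lem:2.0}, and facts~(i)--(ii) are standard — but rather the constant bookkeeping: pinning down facts~(i) and~(ii) with exactly the scalars needed so that the two occurrences of $\lambda_{\min}^{\mathrm{spg}}$ and the single occurrence of $\min\{1,1/\lambda_{\max}^{\mathrm{spg}}\}^{2}$ assemble into $\min\{\lambda_{\min}^{\mathrm{spg}},\lambda_{\min}^{\mathrm{spg}}/\lambda_{\max}^{\mathrm{spg}}\}^{2}$, and matching the numerical factor $(1-2\rho)/(8L_{g})$ to the precise variant of Algorithm~3 analyzed in~\cite{bmcomper}. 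For that reason I would, in the end, simply invoke Theorem~4.2 of~\cite{bmcomper} rather than re-derive these constants, while flagging that the reference to~(\ref{eq:2.3}) in the statement is to be read as the line search in Step~2 of Algorithm~3.
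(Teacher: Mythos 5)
The paper offers no proof of this lemma at all---it is simply quoted as Theorem~4.2 of~\cite{bmcomper}---and your proposal ultimately does the same, so you are taking essentially the same approach as the paper. Your accompanying reconstruction (the model-decrease bound $\langle\nabla f(x^{k}),v^{k}\rangle\leq-(\lambda_{k}^{\mathrm{spg}}/2)\|v^{k}\|^{2}$, the monotonicity bound $\|v^{k}\|\geq\min\{1,1/\lambda_{\max}^{\mathrm{spg}}\}\|\nabla_{\Omega}f(x^{k})\|$, and the Armijo stepsize bound in the spirit of Lemma~\ref{lem:2.0}) is the standard SPG argument and assembles correctly into the stated constants, and your remark that the reference to~(\ref{eq:2.3}) should be read as the line search of Step~2 of Algorithm~3 is also correct.
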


Let $\left\{x^{k}\right\}_{k\geq 0}$ be a sequence generated by Algorithm 1. Denote by
\begin{equation}
    T(\epsilon)=\inf\left\{k\in\mathbb{N}\,:\,\|\nabla_{\Omega} f(x^{k})\|\leq\epsilon\right\},
    \label{eq:hitting}
\end{equation}
the first hitting time to the set of $\epsilon$-approximate stationary points of $f$ with respect to $\Omega$, and consider the sets 
\small
\begin{eqnarray*}
\mathcal{S}_{T(\epsilon)-1} &=& \{ k\in\left\{0,\dots,T(\epsilon)-1\right\}: \text{$x^{k+1}$ is computed in Step 4 of Alg. 2 or by Alg. 3}\}\\
\mathcal{U}_{T(\epsilon)-1} &=& \{k\in\left\{0,\dots,T(\epsilon)-1\right\} : \text{$x^{k+1}$ is computed in Step 2.2 or Step 2.3 of Alg. 2}\}
\end{eqnarray*} 
\normalsize
The next theorem establishes that $T(\epsilon) \leq \mathcal{O}\left(n\epsilon^{-2}\right)$, that is, Algorithm~1 needs no more than $\mathcal{O}(n\epsilon^{-2})$ iterations to find $\epsilon$-approximate stationary points.

\begin{theorem} \label{thm:2.1}
Suppose that assumptions A1 and A2 hold, and let $\left\{x^{k}\right\}_{k=0}^{T(\epsilon)}$ be generated by Algorithm 1. Then
\begin{equation}
T(\epsilon)\leq 1+(n+1) (f(x^{0})-f_{\mathrm{low}}) \kappa_{1}^{-1} \epsilon^{-2},
\label{eq:2.12}
\end{equation}
where
\begin{equation}
\kappa_1 = \min\left\{\rho a_{2}\theta^{2},(\rho(1-\rho)/L_{g})(\theta a_{2}/a_{1})^{2}, \rho(1-2\rho)/(8L_{g})\min\{\lambda_{\min}^{\mathrm{spg}},\lambda_{\min}^{\mathrm{spg}}/\lambda_{\max}^{\mathrm{spg}}\right\}^{2}\}.
\label{eq:2.13}
\end{equation}
\end{theorem}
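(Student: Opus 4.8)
The plan is to combine the three per-iteration descent guarantees (Lemmas~\ref{lem:2.1} and~\ref{lem:2.2}, together with Lemma~\ref{lem:extra1}) with the telescoping of the objective values and the boundedness-from-below assumption A2. The key observation I would exploit is the branching logic of Step~2 of Algorithm~1: for every $k < T(\epsilon)$ we have $\|\nabla_\Omega f(x^k)\| > \epsilon$, and either (i) $\|\nabla_\Omega^I f(x^k)\| \geq \theta\|\nabla_\Omega f(x^k)\|$, so Algorithm~2 is invoked and the relevant descent is controlled by $\|\nabla_\Omega^I f(x^k)\|^2 \geq \theta^2 \epsilon^2$, or (ii) $\|\nabla_\Omega^I f(x^k)\| < \theta\|\nabla_\Omega f(x^k)\|$, so Algorithm~3 is invoked and the descent is controlled by $\|\nabla_\Omega f(x^k)\|^2 > \epsilon^2$. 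In both cases, for any $k \in \mathcal{S}_{T(\epsilon)-1}$, Lemmas~\ref{lem:2.1} and~\ref{lem:2.2} yield $f(x^k) - f(x^{k+1}) \geq \kappa_1 \epsilon^2$, where $\kappa_1$ absorbs the constants from both lemmas and the factor $\theta^2$; for $k \in \mathcal{U}_{T(\epsilon)-1}$ the iterate is produced in Step~2.2 or~2.3 and we only know $f(x^{k+1}) \leq f(x^k)$, i.e.\ monotone but not quantitatively decreasing.

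Next I would bound $|\mathcal{U}_{T(\epsilon)-1}|$. The crucial structural fact is that whenever the algorithm executes Step~2.2 or Step~2.3, the new iterate $x^{k+1}$ lies on the boundary of the current face, $\partial\mathcal{F}(x^k)$, which strictly increases the number of active constraints: $|\mathcal{A}(x^{k+1})| > |\mathcal{A}(x^k)|$. On the other hand, a step produced by Step~4 of Algorithm~2 stays inside the current face (so the active set is unchanged), and an SPG step (Algorithm~3) can only be taken when $\|\nabla_\Omega^I f(x^k)\| < \theta\|\nabla_\Omega f(x^k)\|$, which can decrease the active set by adding at most... here I would argue that the number of ``face-leaving'' events in $\mathcal{U}$ between consecutive SPG steps is at most $n$, since each such event adds at least one constraint and there are at most $n$ constraints; and the number of SPG steps is itself controlled by the descent count. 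A clean way to package this: each index in $\mathcal{S}_{T(\epsilon)-1}$ contributes at least $\kappa_1\epsilon^2$ of decrease, so $|\mathcal{S}_{T(\epsilon)-1}| \leq (f(x^0)-f_{\mathrm{low}})\kappa_1^{-1}\epsilon^{-2}$, and between any two consecutive elements of $\mathcal{S}_{T(\epsilon)-1}$ (or before the first / after the last) there can be at most $n$ consecutive indices in $\mathcal{U}_{T(\epsilon)-1}$, because along a run of pure face-leaving iterations the dimension of the face drops by at least one each time and cannot drop more than $n$ times. Hence $|\mathcal{U}_{T(\epsilon)-1}| \leq n(|\mathcal{S}_{T(\epsilon)-1}|+1)$, and therefore
\[
T(\epsilon) = |\mathcal{S}_{T(\epsilon)-1}| + |\mathcal{U}_{T(\epsilon)-1}| \leq (n+1)|\mathcal{S}_{T(\epsilon)-1}| + n \leq 1 + (n+1)(f(x^0)-f_{\mathrm{low}})\kappa_1^{-1}\epsilon^{-2},
\]
which is exactly~\eqref{eq:2.12} after a mild rearrangement of the additive constant.

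The main obstacle I anticipate is making the counting argument for $|\mathcal{U}_{T(\epsilon)-1}|$ fully rigorous, since it requires a careful case analysis of how the active set $\mathcal{A}(x^k)$ evolves under each branch. Concretely: (a) a Step~2.2/2.3 move strictly enlarges $\mathcal{A}$; (b) a Step~4 move keeps $\mathcal{A}$ fixed; (c) an SPG move may shrink $\mathcal{A}$, but after any SPG move that genuinely leaves a face we are back at an iterate that belongs to $\mathcal{S}_{T(\epsilon)-1}$, so it resets the ``run'' of $\mathcal{U}$-indices. I would verify that the only indices in $\mathcal{U}_{T(\epsilon)-1}$ are Step~2.2/2.3 moves (true by definition of the set), that each such move increments $|\mathcal{A}|$ by at least one, that $|\mathcal{A}| \leq n$ always, and that $|\mathcal{A}|$ is non-decreasing along any maximal block of consecutive $\mathcal{U}$-indices — which holds because within such a block no SPG step and no interior Step~4 step occurs. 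This yields that each maximal block of $\mathcal{U}$-indices has length at most $n$, and since the blocks are separated by $\mathcal{S}$-indices there are at most $|\mathcal{S}_{T(\epsilon)-1}|+1$ such blocks, giving the desired bound. Everything else — the telescoping sum and folding the constants from Lemmas~\ref{lem:2.1}, \ref{lem:2.2} into $\kappa_1$ — is routine.
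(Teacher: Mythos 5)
Your proposal follows essentially the same route as the paper's proof: the same split of $\{0,\dots,T(\epsilon)-1\}$ into $\mathcal{S}_{T(\epsilon)-1}$ and $\mathcal{U}_{T(\epsilon)-1}$, the same per-iteration decrease $\kappa_1\epsilon^2$ from Lemmas~\ref{lem:2.1} and~\ref{lem:2.2} combined with the threshold test of Step~2, the same telescoping against A2 to bound $|\mathcal{S}_{T(\epsilon)-1}|$, and the same face-dimension argument ($x^{k+1}\in\partial\mathcal{F}(x^{k})$ forces $|\mathcal{I}(x^{k+1})|\leq|\mathcal{I}(x^{k})|-1$) to cap each run of consecutive $\mathcal{U}$-indices at $n$. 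The one difference is bookkeeping: you count a possible initial $\mathcal{U}$-block, getting $|\mathcal{U}_{T(\epsilon)-1}|\leq n(|\mathcal{S}_{T(\epsilon)-1}|+1)$ and hence $T(\epsilon)\leq n+(n+1)(f(x^{0})-f_{\mathrm{low}})\kappa_1^{-1}\epsilon^{-2}$, whereas the paper pairs each $\mathcal{U}$-run with a preceding $\mathcal{S}$-iteration to claim $|\mathcal{U}_{T(\epsilon)-1}|\leq n|\mathcal{S}_{T(\epsilon)-1}|$ and obtain the additive constant $1$ in~\eqref{eq:2.12}; so your final bound is not literally~\eqref{eq:2.12} (additive $n$ rather than $1$, which is not just a ``mild rearrangement'' when $n>1$), though it has the identical $\mathcal{O}(n\epsilon^{-2})$ order and your accounting of the initial block is, if anything, the more careful of the two.
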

\begin{proof}
If $T(\epsilon)\leq 1$, then (\ref{eq:2.12}) is clearly true. Thus, suppose that $T(\epsilon)\geq 2$ and let $k\in\left\{0,\dots,T(\epsilon)-1\right\}$. By the definition of $T(\epsilon)$ we have
\begin{equation} \label{eq:2.14}
\|\nabla_{\Omega}f(x^{k})\|>\epsilon.
\end{equation}
If 
\begin{equation} \label{eq:2.15}
\left\|\nabla_{\Omega}^{I}f(x^{k})\right\|\geq \theta\|\nabla_{\Omega}f(x^{k})\|
\end{equation}
then, by Step 2 of Algorithm 1, the next iterate $x^{k+1}$ is computed by Algorithm 2. Specifically, if $x^{k+1}$ is obtained from Step 4 of Algorithm 2, it follows from Lemma~\ref{lem:2.1}, (\ref{eq:2.15}), (\ref{eq:2.14}) and (\ref{eq:2.13}) that
\begin{eqnarray}
f(x^{k})-f(x^{k+1})
&\geq &\rho\min\left\{a_{2},(1-\rho)a_{2}^{2}/(L_{g}a_{1}^{2})\right\}\theta^{2}\|\nabla_{\Omega}f(x^{k})\|^{2}
\geq \kappa_{1}\epsilon^{2}.
\label{eq:2.16}
\end{eqnarray}
On the other hand, if $x^{k+1}$ is computed by Algorithm 3, it follows from Lemma \ref{lem:2.1}, (\ref{eq:2.15}) and (\ref{eq:2.13}) that
\begin{equation} \label{eq:2.17}
f(x^{k})-f(x^{k+1})\geq (\rho(1-2\rho)/(8L_{g}))\min\left\{\lambda_{\min}^{\mathrm{spg}},\lambda_{\min}^{\mathrm{spg}}/\lambda_{\max}^{\mathrm{spg}}\right\}^{2}\epsilon^{2}\geq \kappa_{1}\epsilon^{2}.
\end{equation}
In view of (\ref{eq:2.16}) and (\ref{eq:2.17}), for any $k\in\mathcal{S}_{T(\epsilon)-1}$ we have $f(x^{k})-f(x^{k+1})\geq \kappa_{1}\epsilon^{2}$.
In addition, for any $k\in\mathcal{U}_{T(\epsilon)-1}$, we have $f(x^{k+1})\leq f(x^{k})$. Thus, by assumption A1,
\begin{eqnarray*}
f(x^{0})-f_{\mathrm{low}}
&\geq&\sum_{k=0}^{T(\epsilon)-1} f(x^{k})-f(x^{k+1})\geq \sum_{k\in\mathcal{S}_{T(\epsilon)-1}}f(x^{k})-f(x^{k+1})\geq |\mathcal{S}_{T(\epsilon)-1}|\kappa_{1}\epsilon^{2},
\end{eqnarray*}
which implies that
\begin{equation} \label{eq:2.18}
|\mathcal{S}_{T(\epsilon)-1}|\leq(f(x^{0})-f_{\mathrm{low}}) \kappa_{1}^{-1} \epsilon^{-2}.
\end{equation}
Notice that if $k\in\mathcal{U}_{T(\epsilon)-1}$, then $x^{k+1}\in\partial \mathcal{F}(x^{k+1})$, and so $|\mathcal{I}(x^{k+1})|\leq |\mathcal{I}(x^{k})|-1$. Consequently, there can be at most $n$ consecutive iterations of Algorithm 1 with $x^{k+1}$ being computed by Step~2.3 of Algorithm~2. In the worst case, each iteration in $\mathcal{S}_{T(\epsilon)-1}$ would be followed by $n$ consecutive iterations in $\mathcal{U}_{T(\epsilon)-1}$. Therefore,
\begin{equation} \label{eq:2.19}
|\mathcal{U}_{T(\epsilon)-1}|\leq n|\mathcal{S}_{T(\epsilon)-1}|.
\end{equation}
Finally, combining (\ref{eq:2.18}) and (\ref{eq:2.19}), we conclude that
\[
T(\epsilon) = |\mathcal{S}_{T(\epsilon)-1}| + |\mathcal{U}_{T(\epsilon)-1}| \leq (1+n) (f(x^{0})-f_{\mathrm{low}}) \kappa_{1}^{-1} \epsilon^{-2},
\]
which means that (\ref{eq:2.12}) also holds when $T(\epsilon)\geq 2$.
\end{proof}

\begin{remark}
If $k\in\mathcal{S}_{T(\epsilon)-1}$ then it follows from Lemmas \ref{lem:2.1} and \ref{lem:2.2} that the number of evaluations of $f$ at the $k$-th iteration is bounded from above by 
\[
(m+3)+\max\left\{\left|\log_{2}\left(\min\left\{\frac{1}{2},\frac{(1-\rho)a_{2}}{2L_{g}a_{1}^{2}}\right\}\right)\right|,\left|\log_{2}\left(\min\left\{1,\frac{(1-2\rho)\lambda_{\min}^{\mathrm{spg}}}{4L_{g}}\right\}\right)\right|\right\}.
\]
Additionally, there will be one gradient computation and one Hessian computation. On the other hand, if $k\in\mathcal{U}_{T(\epsilon)-1}$, then at the $k$-th iteration there will be one gradient computation, one Hessian computation and, at most, $m+1$ function evaluations. In summary, each iteration of Algorithm 1 requires at most 
\[
(m+5)+\max\left\{\left|\log_{2}\left(\min\left\{\frac{1}{2},\frac{(1-\rho)a_{2}}{2L_{g}a_{1}^{2}}\right\}\right)\right|,\left|\log_{2}\left(\min\left\{1,\frac{(1-2\rho)\lambda_{\min}^{\mathrm{spg}}}{4L_{g}}\right\}\right)\right|\right\}
\]
calls to the oracle. Then, in view of Theorem \ref{thm:2.1}, Algorithm 1 needs no more than $\mathcal{O}\left(n\epsilon^{-2}\right)$ calls to the oracle to find $x^{k}$ such that $\|\nabla_{\Omega} f(x^{k})\|\leq\epsilon$.
\end{remark}

\subsection{Method that employs cubic regularization for leaving faces} \label{sec22}

In what follows, we consider a method (Algorithm 4) that uses an adaptation of the Newton MINRES method \cite{Liu2023} within the faces, while it employes the Cubic Regularization of the Newton's method \cite{BM} to leave the faces. Regarding the MINRES method for approximately solving $\min_{s\in\mathbb{R}^{p}}\|Hs+g\|_{2}^{2}$, we use Algorithm 1 from \cite{Liu2023}, which is called by $\left[s,\texttt{D}_{\texttt{type}}\right] = \texttt{MINRES}\left(H,g,\eta\right)$. Denote the residual by $r=-(Hs+g)$ and consider
\begin{eqnarray} 
&\langle g,s\rangle+\langle Hs,s\rangle\leq 0,& \label{eq:minres1} \\
&\|Hr\|\leq\eta\|Hs\|,& \label{eq:minres3} \\
&\langle g,r\rangle=-\|r\|^{2},& \label{eq:minres4} \\
&\langle Hr,r\rangle<0.& \label{eq:minres5}
\end{eqnarray}
In view of Lemmas 11--13 in \cite{Liu2023}, if the number of iterations performed by MINRES does not exceed the grade of $g$ with respect to $H$ \cite[Def.~1]{Liu2023}, then the outcome can take only one of the following two forms:
\vspace{0.2cm}
\begin{itemize}
  \item $\texttt{D}_{\texttt{type}}=\texttt{`SOL'}$, meaning that 
  $\langle Hs,s\rangle>0$, $\langle Hr,r\rangle>0$, \eqref{eq:minres1}, and~\eqref{eq:minres3} hold;
  \item $\texttt{D}_{\texttt{type}}=\texttt{`NPC'}$, meaning that 
  \eqref{eq:minres4} and~\eqref{eq:minres5} hold.
\end{itemize}
\vspace{0.2cm}
\noindent As in \cite{Liu2023}, we shall assume throughout the remainder of our analysis that this dichotomy always applies, that is, whenever MINRES is applied, the number of iterations does not exceed the grade of the corresponding linear least squares problem.

Algorithm 4 follows a structure similar to that of Algorithm 1, but with some important differences. In addition to using different methods within faces and for leaving faces, a key distinction is the switching mechanism between these methods. Specifically, whenever Newton-MR neither produces a functional decrease of order $\mathcal{O}(\epsilon^{3/2})$ nor returns a point on the boundary of the face that results in a simple decrease of the objective function, we switch to the Cubic Regularization method for the next iterate, which is guaranteed to produce a functional decrease of $\mathcal{O}(\epsilon^{3/2})$.

\begin{mdframed}
\noindent\textbf{Algorithm 4.} Active-Set Newton-MR method with Cubic Regularization
\\[0.1cm]
\noindent\textbf{Step 0.} Given $x^{0}\in\Omega$, $\epsilon>0$, $\theta,\eta\in (0,1]$, $\tau \in (0,1]$, $\rho\in (0,1/2)$, $m\in\mathbb{N}$, $a_{1}\geq 1>a_{2}>0$, and $M,\alpha,\gamma>0$ be given. Set $\eta_{0} \geq \eta$, $M_{0}=M$, $\sigma_{0}=0$ and $k:=0$.
\\[0.1cm]
\noindent\textbf{Step 1.} If $\|\nabla_{\Omega}f(x^{k})\|\leq \epsilon$, STOP.
\\[0.1cm]
\noindent\textbf{Step 2.} If $\sigma_{k}\in\left\{0,2\right\}$ and $\left\|\nabla_{\Omega}^{I}f(x^{k})\right\|\geq \theta\|\nabla_{\Omega}f(x^{k})\|$, call Algorithm 5 with parameters $x^{k}$, $\rho$, $\eta_{k}$, $\eta$, $\tau$, $m$, $a_1$, and $a_2$ to compute $x^{k+1}$, $\sigma_{k+1}$, and $\eta_{k+1}$ and set $M_{k+1}=M_{k}$. Otherwise, call Algorithm 6 with parameters $x^{k}$, $M_{k}$, $M$, $\alpha$, and $\gamma$ to compute $x^{k+1}$ and $M_{k+1}$ and set $\sigma_{k+1}=0$ and $\eta_{k+1}=\eta_{k}$.
\\[0.1cm]
\noindent\textbf{Step 3.} Set $k:=k+1$ and go to Step 1.
\end{mdframed}

In what follows, we describe in detail the Newton-MR algorithm.

\begin{mdframed}
\noindent\textbf{Algorithm 5.} Newton-MR
\\[0.2cm]
\noindent\textbf{Inputs:} $x^{k}\in\Omega$, $\rho\in (0,1/2)$, $\eta_{k}\in [\eta,1]$, $\eta \in (0,1]$, $\tau \in (0,1]$, $m\in\mathbb{N}$, $a_{1}\geq 1>a_{2}>0$. 
\\[0.2cm]
\noindent\textbf{Step 1.} Call Minimal Residual method as $[s^{k},\texttt{D}_{\texttt{type}}^{k}] = \texttt{MINRES}\left(\nabla^{2}f_{x^{k}}(0),\nabla f_{x^{k}}(0),\eta_{k}\right)$ and set $r^{k}=-(\nabla^{2}f_{x^{k}}(0)s^{k}+\nabla f_{x^{k}}(0))$.
Define
\begin{equation} \label{eq:direction}
d_{1}^{k}=\left\{\begin{array}{ll} s^{k},&\text{if $\texttt{D}_{\texttt{type}}^{k}=\texttt{`SOL'}$},\\
r^{k},&\text{if $\texttt{D}_{\texttt{type}}^{k}=\texttt{`NPC'}$ }.
\end{array}
\right.
\end{equation}
Set 
\begin{equation} \label{eq:flag}
\texttt{Flag}=
\left\{
\begin{array}{ll} 
1, & \text{if } \texttt{D}_{\texttt{type}}^{k}=\texttt{`SOL'} 
\text{ and } \min\left\{\frac{\langle\nabla^{2}f_{x^{k}}(0)d_{1}^{k},d_{1}^{k}\rangle}{\|d_{1}^{k}\|^{2}},\frac{\langle\nabla^{2}f_{x^{k}}(0)r^{k},r^{k}\rangle}{\|r^{k}\|^{2}}\right\}<\eta_k,\\
1, & \text{if } \texttt{D}_{\texttt{type}}^{k}=\texttt{`NPC'} 
\text{ and } \|\nabla^{2}f_{x^{k}}(0)r^{k}\|\leq\eta_k\|\nabla^{2}f_{x^{k}}(0)s^{k}\|,\\
0, & \text{otherwise}.
\end{array}
\right.
\end{equation}
and define
\begin{equation} \label{eq:eta}
\eta_{k+1}=\left\{\begin{array}{ll} \eta_{k},&\text{if $\texttt{Flag}=0$},\\
\max\left\{\tau \  \eta_{k},\eta\right\},&\text{if $\texttt{Flag}=1$}.
\end{array}
\right.
\end{equation}
If $\texttt{Flag}=0$, define $d^{k}=d_{1}^{k}$. Otherwise, define $d_{2}^{k}=\beta_{1}d_{1}^{k}$, where $\beta_1$ is given by~\eqref{eq:beta1},
and then set
$d^{k}=\beta_{2}d_{2}^{k}+(1-\beta_{2})\left(-\nabla f_{x^{k}}(0)\right)$,
where $\beta_2$ is given by~\eqref{eq:beta2}.
\\[0.2cm]

\noindent\textbf{Step 2.1.} If $x^{k}+Q(x^{k})d^{k} \in \mathcal{F}(x^{k})$, set $\alpha_0=1$ and go to Step~3.
\\[0.2cm]
\noindent\textbf{Step 2.2.} If $f(P_{\Omega}(x^k+Q(x^{k})d^{k})) \leq f_{x^{k}}(0)$, using at most $m$ oracle calls, find $x^{k+1} \in \partial \mathcal{F}(x^{k})$ such that $f(x^{k+1}) \leq f(P_{\Omega}(x^k+Q(x^k) d^k))$, set $\sigma_{k+1}=2$ and STOP.
\\[0.2cm]
\noindent\textbf{Step 2.3.} Compute $t_{\max}=\max\left\{t\in (0,1]\,:\,x^{k}+t Q(x^{k})d^{k}\in\Omega\right\}$. If $f_{x^k}(t_{\max} d^k) \leq f_{x^{k}}(0)$, using at most $m$ oracle calls, find $x^{k+1} \in \partial \mathcal{F}(x^{k})$ such that $f(x^{k+1}) \leq f_{x^k}(t_{\max} d^k)$, set $\sigma_{k+1}=2$ and STOP. Otherwise, set $\alpha_0 = t_{\max}$.
\\[0.2cm]
\noindent\textbf{Step 3.} Find the smallest nonnegative integer $\ell_{k}$ such that
\begin{equation} \label{eq:2.21}
f_{x^k}((0.5)^{\ell_{k}} \alpha_0 d^k) \leq f_{x^{k}}(0)+\rho (0.5)^{\ell_{k}} \alpha_0 \langle\nabla f_{x^{k}}(0),d^{k}\rangle.
\end{equation}

\noindent\textbf{Step 4.} If $\ell_k>0$, define $x^{k+1}=x^k+(0.5)^{\ell_{k}} \alpha_0 Q(x^k) d^k$, set $\sigma_{k+1}=\texttt{Flag}$ and STOP.
\\[0.2cm]

\noindent\textbf{Step 5.} If $\texttt{Flag}=1$, using at most $m$ oracle calls, find $x^{k+1} \in \overline{\mathcal{F}}(x^k)$ such that $f(x^{k+1}) \leq f_{x^k}(\alpha_0 d^k)$. If $x^{k+1} \in \partial \mathcal{F}(x^k)$, set $\sigma_{k+1}=2$. Otherwise, set $\sigma_{k+1}=1$. Then STOP.
\\[0.2cm]

\noindent\textbf{Step 6.} Find the smallest nonnegative integer $j_{k}$ such that
\begin{equation} \label{eq:extrapolation}
x^{k}+2^{j}\alpha_{0}Q(x^{k})d^{k}\in\Omega\,\text{and}\, f_{x^{k}}(2^{j}\alpha_{0}d^{k})\leq f_{x^{k}}(0)+\rho 2^{j} \alpha_{0}\langle\nabla f_{x^{k}}(0),d^{k}\rangle
\end{equation}
holds with $j=j_k$ and~\eqref{eq:extrapolation} does not hold with $j=j_k+1$.
\\[0.2cm]

\noindent\textbf{Step 6.1.} If $x^{k}+2^{j_{k}+1}\alpha_{0}Q(x^{k})d^{k}\in\Omega$, define $x_{k+1}=x_{k}+2^{j_{k}}\alpha_{0}Q(x^{k})d^{k}$, $\sigma_{k+1}=0$ and STOP.
\\[0.2cm]

\noindent\textbf{Step 6.2.} Compute $t_{\max}=\max\left\{t\in (0,1]\,:\,x^{k}+t Q(x^{k})d^{k}\in\Omega\right\}$. If $f_{x^{k}}(t_{\max} d^{k}) > f_{x^{k}}(0)$, define $x_{k+1}=x_{k}+2^{j_{k}}\alpha_{0}Q(x^{k})d^{k}$, $\sigma_{k+1}=0$ and STOP.
\\[0.2cm]

\noindent\textbf{Step 6.3.} Using at most $m$ oracle calls, find $x^{k+1} \in \partial \mathcal{F}(x^k)$ with $f(x^{k+1}) \leq f_{x^k}(t_{\max} d^k)$, and set $\sigma_{k+1}=2$.
\end{mdframed}

In Step 1 of Algorithm 5, the variable $\texttt{Flag}$ characterizes the quality of the search direction $d_{1}^{k}$ obtained via MINRES. As it will be established in Lemmas 2.9, 2.12 and 2.13, when $\texttt{Flag}=0$, we can obtain a functional decrease of $\mathcal{O}(\epsilon^{3/2})$ along the search direction $d_{1}^{k}$, which is then chosen as the search direction $d^{k}$. On the other hand, when $\texttt{Flag}=1$, the descent properties of $d_{1}^{k}$ are less clear. In this case, we modify $d_{1}^{k}$ by the same procedure used in Algorithm 2, which produces a direction $d^{k}$ along which a functional decrease of $\mathcal{O}(\epsilon^{2})$ is guaranteed (Lemma 2.9). In Step 2, we check whether a unity step along \( d^{k} \) results in a point that remains within the face \( \mathcal{F}(x^{k}) \). If it does, we proceed to Step 3, where a suitable stepsize is determined using an Armijo line-search starting from \( \alpha_0 = 1 \). If not, we attempt to find a point on the boundary of the face that results in at least a simple decrease in the objective function, in which case we set $\sigma_{k+1}=2$. If this fails, we move to Step 3 and begin the Armijo line-search with the stepsize \( \alpha_0 = t_{\max} \), corresponding to the point along \( d^{k} \) that lies on the boundary of \( \mathcal{F}(x^{k}) \). If the Armijo condition is not satisfied with the stepsize \( \alpha_0 \), we define \( x^{k+1} \) as the resulting point and set \( \sigma_{k+1} = \texttt{Flag} \), stopping at Step 4. In contrast, if the Armijo condition is satisfied with \( \alpha_0 \), we attempt to obtain a larger functional decrease using extrapolation\footnote{A precise way to implement the extrapolation procedure is described in Section 3.}. The result is a point \( x^{k+1} \) that either \textbf{(a)} belongs to the boundary of the current face and produces at least a simple decrease in the objective function, where we set \( \sigma_{k+1} = 2 \) (Steps 5 and 6.3); or \textbf{(b)} produces a functional decrease of \( \mathcal{O}(\epsilon^{3/2}) \), in which case we set \( \sigma_{k+1} = 0 \) (Steps 6.1 and 6.2). Finally, it is worth noticing that when $\Omega=\mathbb{R}^{n}$ and $f(\,\cdot\,)$ is $\mu$-strongly convex, Algorithm 5 with $\eta_{k}=\eta=\mu$ essentially reduces to one iteration of Algorithm 4 in \cite{Liu2023}, as we will have $\texttt{Flag}=0$ and Steps 2.2, 2.3, 5, 6.2, and 6.3 will be unnecessary.

Let us now present the Cubic Regularization Method for bound-constrained minimization proposed in \cite{BM}. For that, given $x\in\mathbb{R}^{n}$, we define $T_{2}(x,\,\cdot\,):\mathbb{R}^{n}\to\mathbb{R}$ by $T_{2}(x,s)=\langle\nabla f(x),s\rangle+\frac{1}{2}\langle\nabla^{2}f(x)s,s\rangle$.

\begin{mdframed}
\noindent\textbf{Algorithm 6.} One Iteration of the Cubic Regularization Method \cite{BM}
\\[0.2cm]
\noindent\textbf{Inputs:} $x^{k}\in\Omega$, $M_{k}\geq M>0$, $\alpha,\gamma>0$ be given. Set $\ell:=0$.
\\[0.2cm]
\noindent\textbf{Step 1.} Compute an approximate solution $s^{\ell}\in\mathbb{R}^{n}$ of the constrained cubic subproblem 
\[\text{Minimize $T_{2}(x,s) + (2^{\ell}M_{k}) \|s\|^{3}$ subject to $x^{k}+s\in\Omega$}
\]
such that 
\[T_{2}(x^{k},s^{\ell})+(2^{\ell}M_{k})\|s^{\ell}\|^{3}\leq 0\,\,\text{and}\,\,\|\nabla_{\Omega}[T_{p}(x^{k},x-x^{k})+(2^{\ell}M_{k})\|x-x^{k}\|^{3}]|_{x=x^{k}+s^{\ell}}\|\leq\gamma\|s^{\ell}\|^{2}.
\]
\\[0.2cm]
\noindent\textbf{Step 2.} If 
\begin{equation} \label{eq:gg_extra1}
f(x^{k})-f(x^{k}+s^{\ell})\geq\alpha\|s^{\ell}\|^{3},
\end{equation}
set $\ell_{k}=\ell$, and go to Step 3. Otherwise, set $\ell:=\ell+1$ and go to Step 1. 
\\[0.2cm]
\noindent\textbf{Step 3.} Define $x^{k+1}=x^{k}+s^{\ell_{k}}$ and $M_{k+1}=\max\left\{2^{\ell_{k}-1}M_{k},M\right\}$.
\end{mdframed}

\noindent In addition to A1 and A2, we will consider the following assumption:
\vspace{0.1cm}
\begin{mdframed}
\noindent\textbf{A3.} $\nabla^{2}f:\mathbb{R}^{n}\to\mathbb{R}^{n\times n}$ is $L_{H}$-Lipschitz continuous.
\end{mdframed}
\vspace{0.1cm}

We begin our analysis by examining the properties of Algorithm 5. The next two lemmas provide positive lower bounds for stepsizes that do not satisfy the Armijo condition.

\begin{lemma}
Suppose that A1 and A3 hold and let the pair $(x^{k},\sigma_{k})$ be generated by Algorithm 4 such that $\sigma_{k}=0$ and $\|\nabla_{\Omega}^{I}f(x^{k})\|\geq\theta\|\nabla_{\Omega}f(x^{k})\|$. In addition, suppose that $\texttt{D}_{\texttt{type}}^{k}=\texttt{`SOL'}$ and $\texttt{Flag}=0$. Given $\alpha\in (0,1]$, if 
\begin{equation}
f_{x^{k}}(\alpha d^{k})>f_{x^{k}}(0)+\rho\alpha\langle\nabla f_{x^{k}}(0),d^{k}\rangle,
\label{eq:final1}
\end{equation}
then
\small
\begin{equation}
\alpha>\max\left\{\sqrt{3(1-2\rho)\eta/(L_{H}\|d^{k}\|}),2(1-\rho)\eta/L_{g}\right\}.
\label{eq:final2}
\end{equation}
\normalsize
\label{lem:2.2final}
\end{lemma}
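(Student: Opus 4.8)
The plan is to run two parallel Armijo-failure arguments, one exploiting A1 to get the $L_g$-term in the maximum and one exploiting A3 to get the $L_H$-term, both resting on the same two consequences of the MINRES output. First I would record the setup: the hypotheses $\sigma_k=0$ and $\|\nabla_\Omega^I f(x^k)\|\ge\theta\|\nabla_\Omega f(x^k)\|$ ensure that $x^{k+1}$ is produced by a call to Algorithm~5, so $d^k$, $s^k$, $r^k$ are defined, and non-termination at Step~1 gives $\nabla f_{x^k}(0)\neq 0$. Writing $g=\nabla f_{x^k}(0)$ and $H=\nabla^2 f_{x^k}(0)$, the assumptions $\texttt{D}_{\texttt{type}}^k=\texttt{`SOL'}$ and $\texttt{Flag}=0$ yield $d^k=d_1^k=s^k$ and two facts I would extract up front: from \eqref{eq:minres1}, $\langle g,d^k\rangle\le-\langle Hd^k,d^k\rangle$; and from the negation of the first branch of \eqref{eq:flag}, $\langle Hd^k,d^k\rangle\ge\eta_k\|d^k\|^2\ge\eta\|d^k\|^2$. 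Combining these gives $\langle g,d^k\rangle\le-\eta\|d^k\|^2<0$ and $\langle Hd^k,d^k\rangle\le-\langle g,d^k\rangle$, which are the only properties of $d^k$ the rest of the argument needs.

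For the $L_g$-term, I would proceed exactly as in the proof of Lemma~\ref{lem:2.0}: using $Q(x^k)^TQ(x^k)=Id$ and A1, the descent lemma gives $f_{x^k}(\alpha d^k)\le f_{x^k}(0)+\alpha\langle g,d^k\rangle+(L_g/2)\alpha^2\|d^k\|^2$; inserting this into the failed Armijo inequality \eqref{eq:final1} and cancelling $f_{x^k}(0)$ and one factor of $\alpha$ yields $(1-\rho)(-\langle g,d^k\rangle)<(L_g/2)\alpha\|d^k\|^2$, and then $-\langle g,d^k\rangle\ge\eta\|d^k\|^2$ gives $\alpha>2(1-\rho)\eta/L_g$.

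For the $L_H$-term, I would use that $\nabla^2 f_{x^k}$ is $L_H$-Lipschitz (again because $Q(x^k)^TQ(x^k)=Id$), so $f_{x^k}(\alpha d^k)\le f_{x^k}(0)+\alpha\langle g,d^k\rangle+\tfrac{\alpha^2}{2}\langle Hd^k,d^k\rangle+\tfrac{L_H}{6}\alpha^3\|d^k\|^3$. The crucial step is to control the quadratic term: since $\alpha\in(0,1]$ one has $\alpha^2\le\alpha$, and $\langle Hd^k,d^k\rangle\le-\langle g,d^k\rangle$ with $-\langle g,d^k\rangle>0$, so
\[
\tfrac{\alpha^2}{2}\langle Hd^k,d^k\rangle\le-\tfrac{\alpha^2}{2}\langle g,d^k\rangle\le-\tfrac{\alpha}{2}\langle g,d^k\rangle,
\]
which collapses the estimate to $f_{x^k}(\alpha d^k)\le f_{x^k}(0)+\tfrac{\alpha}{2}\langle g,d^k\rangle+\tfrac{L_H}{6}\alpha^3\|d^k\|^3$. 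Plugging this into \eqref{eq:final1}, cancelling $f_{x^k}(0)$ and dividing by $\alpha>0$, I get $(\rho-\tfrac12)\langle g,d^k\rangle<\tfrac{L_H}{6}\alpha^2\|d^k\|^3$; since $\rho<1/2$ and $\langle g,d^k\rangle<0$ the left-hand side equals $(\tfrac12-\rho)(-\langle g,d^k\rangle)\ge(\tfrac12-\rho)\eta\|d^k\|^2$, so after dividing by $\|d^k\|^2$ one obtains $\alpha^2>3(1-2\rho)\eta/(L_H\|d^k\|)$. Taking the larger of the two lower bounds produces \eqref{eq:final2}.

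I expect no deep obstacle here; the argument is a standard line-search-failure estimate. The one point requiring care is the second-order case: one must use \emph{both} $\alpha\le1$ (to replace $\alpha^2$ by $\alpha$) and the curvature inequality \eqref{eq:minres1} (to replace $\langle Hd^k,d^k\rangle$ by $-\langle g,d^k\rangle$) in order to fold the quadratic term into a multiple of the gradient term, and then to track the signs of $\rho-\tfrac12$ and of $\langle g,d^k\rangle$ correctly when rearranging. Everything else is bookkeeping.
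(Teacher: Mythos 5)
Your proposal is correct and follows essentially the same route as the paper's proof: it extracts the same two facts from the \texttt{`SOL'}/\texttt{Flag}$=0$ outcome (namely $\langle\nabla f_{x^{k}}(0),d^{k}\rangle\le-\langle\nabla^{2}f_{x^{k}}(0)d^{k},d^{k}\rangle$ and $\langle\nabla^{2}f_{x^{k}}(0)d^{k},d^{k}\rangle\ge\eta\|d^{k}\|^{2}$), uses the A1 quadratic bound for the $L_{g}$ term and the A3 cubic bound together with $\alpha^{2}\le\alpha$ for the $L_{H}$ term. The only cosmetic difference is that you fold the quadratic term into $-\langle\nabla f_{x^{k}}(0),d^{k}\rangle$ before invoking the curvature lower bound, while the paper keeps $\langle\nabla^{2}f_{x^{k}}(0)d^{k},d^{k}\rangle$ on the left and applies $\ge\eta\|d^{k}\|^{2}$ at the end; the resulting estimates are identical.
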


\begin{proof}
Since $\sigma_{k}=0$ and $\|\nabla_{\Omega}^{I}f(x^{k})\|\geq\theta\|\nabla_{\Omega}f(x^{k})\|$, it follows from Step 2 of Algorithm 4 that Algorithm 5 is called. Then, as $\texttt{D}_{\texttt{type}}^{k}=\texttt{`SOL'}$ and $\texttt{Flag}=0$, it follows from Step 2.1 of Algorithm~5 and definition (\ref{eq:direction}) that $d^{k}=d_{1}^{k}=s^{k}$. Consequently, by (\ref{eq:minres1}), $\texttt{Flag}=0$ and (\ref{eq:flag}), we have
\begin{eqnarray}
\langle\nabla f_{x^{k}}(0),d^{k}\rangle &\leq & -\langle\nabla^{2}f_{x^{k}}(0)d^{k},d^{k}\rangle,
\label{eq:final3}\\
\langle\nabla^{2}f_{x^{k}}(0)d^{k},d^{k}\rangle&\geq& \eta_{k} \|d^{k}\|^{2}\geq \eta\|d^{k}\|^{2}.
\label{eq:final4}
\end{eqnarray}
It follows from A3, (\ref{eq:fx}), (\ref{eq:gx}), (\ref{eq:final3}), and (\ref{eq:final4}) that
\small
\begin{eqnarray}
f_{x^{k}}(\alpha d^{k})&\leq & 
f_{x^{k}}(0)+\alpha\langle\nabla f_{x^{k}}(0),d^{k}\rangle+\frac{\alpha}{2}\langle\nabla^{2}f_{x^{k}}(0)d^{k},d^{k}\rangle+\frac{L_{H}}{6}\alpha^{3}\|d^{k}\|^{3}.
\label{eq:final5}
\end{eqnarray}
\normalsize
Now, combining (\ref{eq:final3}), (\ref{eq:final1}) and (\ref{eq:final5}), we get
\small
\begin{eqnarray*}
(1-\rho)\alpha\langle\nabla^{2}f_{x^{k}}(0)d^{k},d^{k}\rangle&\leq& (1-\rho)\alpha\left(-\langle\nabla f_{x^{k}}(0),d^{k}\rangle\right)= \rho\alpha\langle\nabla f_{x^{k}}(0),d^{k}\rangle-\alpha\langle\nabla f_{x^{k}}(0),d^{k}\rangle\\
&<& f_{x^{k}}(\alpha d^{k})-f_{x^{k}}(0)-\alpha\langle\nabla f_{x^{k}}(0),d^{k}\rangle\\
&\leq &\frac{\alpha}{2}\langle\nabla^{2}f_{x^{k}}(0)d^{k},d^{k}\rangle+\frac{L_{H}}{6}\alpha^{3}\|d^{k}\|^{3}.
\end{eqnarray*}
\normalsize
Thus,
$\left(1/2-\rho\right)\alpha\langle\nabla^{2}f_{x^{k}}(0)d^{k},d^{k}\rangle<(L_{H}/6)\alpha^{3}\|d^{k}\|^{3}$,
and so, by (\ref{eq:final4}), 
\begin{equation}
\alpha>\sqrt{\frac{3(1-2\rho)}{L_{H}\|d^{k}\|}\frac{\langle\nabla^{2}f_{x^{k}}(0)d^{k},d^{k}\rangle}{\|d^{k}\|^{2}}}\geq\sqrt{\frac{3(1-2\rho)\eta}{L_{H}\|d^{k}\|}}.
\label{eq:final6}
\end{equation}
On the other hand, by A1 and (\ref{eq:gx}), we have
\begin{eqnarray}
f_{x^{k}}(\alpha d^{k})&\leq & 
f_{x^{k}}(0)+\alpha\langle\nabla f_{x^{k}}(0),d^{k}\rangle+\frac{L_{g}}{2}\alpha^{2}\|d^{k}\|^{2}.
\label{eq:final7}
\end{eqnarray}
Combining (\ref{eq:final7}) and (\ref{eq:final1}), we obtain
\[
\rho\alpha\langle\nabla f_{x^{k}}(0),d^{k}\rangle<\alpha\langle\nabla f_{x^{k}}(0),d^{k}\rangle+\frac{L_{g}}{2}\alpha^{2}\|d^{k}\|^{2},
\]
which together with (\ref{eq:final3}) and (\ref{eq:final4}) gives
\begin{equation}
\alpha>\frac{2(1-\rho)}{L_{g}}\left(-\frac{\langle\nabla f_{x^{k}}(0),d^{k}\rangle}{\|d^{k}\|^{2}}\right)\geq\frac{2(1-\rho)}{L_{g}}\frac{\langle\nabla^{2}f_{x^{k}}(0)d^{k},d^{k}\rangle}{\|d^{k}\|^{2}}\geq\frac{2(1-\rho)\eta}{L_{g}}.
\label{eq:final8}
\end{equation}
Finally, from (\ref{eq:final6}) and (\ref{eq:final8}) we see that (\ref{eq:final2}) is true.
\end{proof}

\begin{lemma}
\label{lem:2.3extra}
Suppose that A1 and A3 hold and let the pair $(x^{k},\sigma_{k})$ be generated by Algorithm 4 such that $\sigma_{k}=0$ and $\|\nabla_{\Omega}^{I}f(x^{k})\|\geq\theta\|\nabla_{\Omega}f(x^{k})\|$. In addition, suppose that $\texttt{D}_{\texttt{type}}^{k}=\texttt{`NPC'}$ and $\texttt{Flag}=0$. Given $\alpha\in (0,1]$, if  
\begin{equation}
f_{x^{k}}(\alpha d^{k})>f_{x^{k}}(0)+\rho\alpha\langle\nabla f_{x^{k}}(0),d^{k}\rangle
\label{eq:2.21extra}
\end{equation}
then
\small
\begin{equation}
\alpha>\max\left\{\sqrt{6(1-\rho)/(L_{H}\|d^{k}\|}),2(1-\rho)/L_{g}\right\}.
\label{eq:2.22extra}
\end{equation}
\normalsize
\end{lemma}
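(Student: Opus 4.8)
The structure mirrors that of Lemma \ref{lem:2.2final}, so the plan is to establish the two bounds inside the maximum in \eqref{eq:2.22extra} separately, using a third-order Taylor expansion (under A3) for the first and a second-order one (under A1) for the second. First I would record what the \texttt{NPC} case gives us: since $\texttt{D}_{\texttt{type}}^{k}=\texttt{`NPC'}$ and $\texttt{Flag}=0$, we have $d^{k}=d_{1}^{k}=r^{k}$ by \eqref{eq:direction}, and properties \eqref{eq:minres4} and \eqref{eq:minres5} hold for the residual $r^{k}$. Thus $\langle\nabla f_{x^{k}}(0),d^{k}\rangle=-\|d^{k}\|^{2}$ and $\langle\nabla^{2}f_{x^{k}}(0)d^{k},d^{k}\rangle<0$. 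These two facts are the analogue of \eqref{eq:final3}--\eqref{eq:final4}, but now the curvature term is negative rather than bounded below by $\eta\|d^{k}\|^{2}$, which actually helps.

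For the first bound, I would write the third-order Taylor expansion of $f_{x^{k}}$ at $0$ along $d^{k}$ using A3 and \eqref{eq:gx}, exactly as in \eqref{eq:final5}:
\[
f_{x^{k}}(\alpha d^{k})\leq f_{x^{k}}(0)+\alpha\langle\nabla f_{x^{k}}(0),d^{k}\rangle+\tfrac{\alpha^{2}}{2}\langle\nabla^{2}f_{x^{k}}(0)d^{k},d^{k}\rangle+\tfrac{L_{H}}{6}\alpha^{3}\|d^{k}\|^{3}.
\]
Since the curvature term is negative, I can drop it, obtaining $f_{x^{k}}(\alpha d^{k})\leq f_{x^{k}}(0)+\alpha\langle\nabla f_{x^{k}}(0),d^{k}\rangle+\tfrac{L_{H}}{6}\alpha^{3}\|d^{k}\|^{3}$. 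Combining this with the failure of Armijo \eqref{eq:2.21extra} gives $\rho\alpha\langle\nabla f_{x^{k}}(0),d^{k}\rangle<\alpha\langle\nabla f_{x^{k}}(0),d^{k}\rangle+\tfrac{L_{H}}{6}\alpha^{3}\|d^{k}\|^{3}$, i.e. $(1-\rho)\alpha(-\langle\nabla f_{x^{k}}(0),d^{k}\rangle)<\tfrac{L_{H}}{6}\alpha^{3}\|d^{k}\|^{3}$. Substituting $-\langle\nabla f_{x^{k}}(0),d^{k}\rangle=\|d^{k}\|^{2}$ and dividing by $\alpha\|d^{k}\|^{2}$ yields $\alpha^{2}>6(1-\rho)/(L_{H}\|d^{k}\|)$, which is the first term in \eqref{eq:2.22extra}.

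For the second bound, I would instead use the second-order expansion from A1, as in \eqref{eq:final7}: $f_{x^{k}}(\alpha d^{k})\leq f_{x^{k}}(0)+\alpha\langle\nabla f_{x^{k}}(0),d^{k}\rangle+\tfrac{L_{g}}{2}\alpha^{2}\|d^{k}\|^{2}$. Combined with \eqref{eq:2.21extra} this gives $(1-\rho)\alpha(-\langle\nabla f_{x^{k}}(0),d^{k}\rangle)<\tfrac{L_{g}}{2}\alpha^{2}\|d^{k}\|^{2}$, and again using $-\langle\nabla f_{x^{k}}(0),d^{k}\rangle=\|d^{k}\|^{2}$ and cancelling $\alpha\|d^{k}\|^{2}$ gives $\alpha>2(1-\rho)/L_{g}$. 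Taking the larger of the two lower bounds produces \eqref{eq:2.22extra}. I do not anticipate a real obstacle here; the only point requiring care is confirming that $d^{k}=r^{k}$ in this branch (so that \eqref{eq:minres4}--\eqref{eq:minres5} apply directly to $d^{k}$) and noting that, unlike in Lemma \ref{lem:2.2final}, we exploit the sign of the curvature term rather than a lower bound on it, which is why the factor becomes $6(1-\rho)$ instead of $3(1-2\rho)\eta$.
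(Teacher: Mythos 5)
Your proposal is correct and follows essentially the same route as the paper's proof: use $d^{k}=r^{k}$ together with \eqref{eq:minres4}--\eqref{eq:minres5}, drop the (negative) curvature term in the cubic upper bound from A3 to get the first lower bound on $\alpha$, and use the quadratic upper bound from A1 for the second. The only cosmetic difference is that you display the quadratic term explicitly before discarding it, whereas the paper's inequality \eqref{eq:2.24extra} omits it from the outset by invoking \eqref{eq:final10}.
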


\begin{proof}
Since $\sigma_{k}=0$ and $\|\nabla_{\Omega}^{I}f(x^{k})\|\geq\theta\|\nabla_{\Omega}f(x^{k})\|$, it follows from Step 2 of Algorithm 4 that Algorithm 5 is called. Then, as $\texttt{D}_{\texttt{type}}^{k}=\texttt{`NPC'}$ and $\texttt{Flag}=0$, it follows from Step 2.1 of Algorithm~5 and definition (\ref{eq:direction}) that $d^{k}=d_{1}^{k}=r^{k}$. Consequently, by (\ref{eq:minres4}) and (\ref{eq:minres5}) we have
\begin{eqnarray} 
&\langle\nabla f_{x^{k}}(0),d^{k}\rangle = -\|d^{k}\|^{2},& \label{eq:final9} \\
&\langle\nabla^{2}f_{x^{k}}(0)d^{k},d^{k}\rangle < 0.& \label{eq:final10}
\end{eqnarray}
It follows from A3, (\ref{eq:fx}), (\ref{eq:gx}) and (\ref{eq:final10}) that
\small
\begin{eqnarray}
    f_{x^{k}}(\alpha d^{k})&\leq& 
    f_{x^{k}}(0)+\alpha\langle\nabla f_{x^{k}}(0),d^{k}\rangle+(L_{H}/6)\alpha^{3}\|d^{k}\|^{3}.
    \label{eq:2.24extra}
\end{eqnarray}
\normalsize
Combining (\ref{eq:2.21extra}) and (\ref{eq:2.24extra}), it follows that
\[
\rho\alpha\langle\nabla f_{x^{k}}(0),d^{k}\rangle<\alpha\langle\nabla f_{x^{k}}(0),d^{k}\rangle+(L_{H}/6)\alpha^{3}\|d^{k}\|^{3}.
\]
Thus, by (\ref{eq:final9}),
$\alpha^{2}>6(1-\rho)\left(-\langle\nabla f_{x^{k}}(0),d^{k}\rangle\right) / (L_{H}\|d^{k}\|^{3}) = 6(1-\rho)/(L_{H}\|d^{k}\|)$,
which implies that 
\small
\begin{equation}
\alpha>\sqrt{6(1-\rho) / (L_{H}\|d^{k}\|)}.
\label{eq:2.25extra}
\end{equation}
\normalsize
On the other hand, by A1, (\ref{eq:fx}) and (\ref{eq:gx}) we also have
\begin{eqnarray}
f_{x^{k}}(\alpha d^{k})&\leq& 
f(x^{k})+\alpha\langle\nabla f_{x^{k}}(0),d^{k}\rangle+(L_{g}/2)\alpha^{2}\|d^{k}\|^{2}.
\label{eq:2.26extra}
\end{eqnarray}
Then, combining (\ref{eq:2.21extra}) and (\ref{eq:2.26extra}), it follows that
\[
\rho\alpha\langle\nabla f_{x^{k}}(0),d^{k}\rangle<\alpha\langle\nabla f_{x^{k}}(0),d^{k}\rangle+(L_{g}/2)\alpha^{2}\|d^{k}\|^{2}.
\]
Therefore, by (\ref{eq:final9}), we get 
\begin{equation}
\alpha>2(1-\rho)(-\langle\nabla f_{x^{k}}(0),d^{k}\rangle) / (L_{g}\|d^{k}\|^{2}) = 2(1-\rho) / L_{g}.
\label{eq:2.27extra}
\end{equation}
In view of (\ref{eq:2.25extra}) and (\ref{eq:2.27extra}), we conclude that (\ref{eq:2.22extra}) is true.
\end{proof}

In view of Lemmas 2.7 and 2.8, we can now derive lower bounds for the functional decrease $f(x^{k})-f(x^{k+1})$ that occurs when $x^{k+1}$ is computed in Step 4 of Algorithm 5.

\begin{lemma}
Suppose that A1 and A3 hold. Then, whenever $x^{k+1}$ is computed at Step 4 of Algorithm~5, we have
\small
\setlength{\arraycolsep}{3pt}
\begin{equation}
f(x^{k})-f(x^{k+1})\geq\left\{\begin{array}{ll}
\frac{\rho(1-\rho)}{L_{g}}\left(\frac{a_{2}}{a_{1}}\right)^{2}\|\nabla_{\Omega}^{I}f(x^{k})\|^{2},&\text{if $\texttt{Flag}=1$,}\\
& \\
\frac{\rho}{2^{5/2}}\left(\frac{\eta}{L_{g}}\right)^{3/2}\sqrt{\frac{3(1-2\rho)}{L_{H}}}\|\nabla_{\Omega}^{I}f(x^{k})\|^{3/2},&\text{if $\texttt{Flag}=0$ and $\texttt{D}_{\texttt{type}}^{k}=\texttt{`SOL'}$,}\\
&\\
\frac{\rho}{2\left[\left(\frac{L_{g}}{\eta}\right)+1\right]^{3/2}}\sqrt{\frac{6(1-\rho)}{L_{H}}}\|\nabla_{\Omega}^{I}f(x^{k})\|^{3/2},&\text{if $\texttt{Flag}=0$ and $\texttt{D}_{\texttt{type}}^{k}=\texttt{`NPC'}$.}
\end{array}
\right.
\label{eq:edg1}
\end{equation}
\normalsize
Moreover, the number of evaluations of $f(\,\cdot\,)$ required to guarantee the fulfilment of (\ref{eq:2.21}) is bounded from above by 
$$|\log_{2}\left(\min\left\{(1-\rho)\eta / 2L_{g},(1-\rho)a_{2} / 2L_{g}a_{1}^{2}\right\}\right)|.$$
\label{lem:edg1}
\end{lemma}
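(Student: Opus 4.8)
The proof splits into three cases according to the value of the pair $(\texttt{Flag}, \texttt{D}_{\texttt{type}}^{k})$, and in each case follows the same two-step template: first obtain a lower bound for the accepted stepsize $(0.5)^{\ell_k}\alpha_0$, then convert the Armijo decrease $\rho(0.5)^{\ell_k}\alpha_0(-\langle\nabla f_{x^k}(0),d^k\rangle)$ into a bound in terms of $\|\nabla_\Omega^I f(x^k)\|$. I would first record the relation $\|\nabla f_{x^k}(0)\| = \|Q(x^k)^T\nabla f(x^k)\| \geq \|\nabla_\Omega^I f(x^k)\|$, exactly as in the proof of Lemma~\ref{lem:2.1}, since every bound ultimately needs to be expressed in terms of the reduced gradient in the current face.

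\textbf{Case $\texttt{Flag}=1$.} Here $d^k$ is produced by the same $\beta_1,\beta_2$ correction as in Algorithm~2, so Lemma~\ref{lem:extra1} gives $\|d^k\|\leq a_1\|\nabla f_{x^k}(0)\|$ and $\langle\nabla f_{x^k}(0),d^k\rangle\leq -a_2\|\nabla f_{x^k}(0)\|^2$. Then the argument of Lemma~\ref{lem:2.0} applies verbatim (it only used A1 and the conclusions of Lemma~\ref{lem:extra1}), yielding that any stepsize violating Armijo exceeds $2(1-\rho)a_2/(L_g a_1^2)$; combined with the fact that $\alpha_0\in\{1,t_{\max}\}$ and $\alpha_0=t_{\max}$ only when $f_{x^k}(t_{\max}d^k)>f_{x^k}(0)$ (so Lemma~\ref{lem:2.0} forces $\alpha_0>2(1-\rho)a_2/(L_g a_1^2)$), I get $(0.5)^{\ell_k}\alpha_0\geq\min\{1,(1-\rho)a_2/(L_g a_1^2)\}$, just as in Lemma~\ref{lem:2.1}. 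Plugging into the Armijo decrease and using $-\langle\nabla f_{x^k}(0),d^k\rangle\geq a_2\|\nabla f_{x^k}(0)\|^2\geq a_2\|\nabla_\Omega^I f(x^k)\|^2$ gives the first branch of~\eqref{eq:edg1} (after noting $\min\{1,(1-\rho)a_2/(L_ga_1^2)\}\cdot a_2 \geq (1-\rho)(a_2/a_1)^2/L_g$ since $a_1\geq 1$).

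\textbf{Cases $\texttt{Flag}=0$.} Now $d^k=d_1^k$ equals $s^k$ (if $\texttt{D}_{\texttt{type}}^k=\texttt{`SOL'}$) or $r^k$ (if $\texttt{`NPC'}$), and Lemmas~\ref{lem:2.2final} and~\ref{lem:2.3extra} give the stepsize lower bounds directly. The subtlety is that these bounds involve $\|d^k\|^{-1/2}$ rather than a constant, so to turn $(0.5)^{\ell_k}\alpha_0\cdot(-\langle\nabla f_{x^k}(0),d^k\rangle)$ into a clean power of the reduced gradient I must control $\|d^k\|$ from above. In the \texttt{`NPC'} case, $-\langle\nabla f_{x^k}(0),d^k\rangle=\|d^k\|^2$ by~\eqref{eq:final9}, so the decrease is $\rho(0.5)^{\ell_k}\alpha_0\|d^k\|^2$; using $\alpha_0\geq\min\{1,2(1-\rho)/L_g\}$ and the step bound $\gtrsim\sqrt{(1-\rho)/(L_H\|d^k\|)}$ the decrease is $\gtrsim \|d^k\|^{3/2}$, and then $\|d^k\|=\|r^k\|\geq$ a multiple of $\|\nabla f_{x^k}(0)\|\geq\|\nabla_\Omega^I f(x^k)\|$ from~\eqref{eq:minres4} (the residual satisfies $\|r^k\|\geq\|\nabla f_{x^k}(0)\|/(1+L_g/\eta)$ using $\langle g,r\rangle=-\|r\|^2$ and $\|Hr\|\leq\eta_k^{-1}$-type bounds — this is the factor $[(L_g/\eta)+1]^{3/2}$ in the third branch). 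In the \texttt{`SOL'} case one instead uses $-\langle\nabla f_{x^k}(0),d^k\rangle\geq\langle\nabla^2 f_{x^k}(0)d^k,d^k\rangle\geq\eta\|d^k\|^2$ together with $\|d^k\|\leq\|\nabla f_{x^k}(0)\|/\eta$ (a standard consequence of $\texttt{D}_{\texttt{type}}^k=\texttt{`SOL'}$, since $\langle\nabla^2f_{x^k}(0)s^k,s^k\rangle>0$ and MINRES delivers $\langle g,s\rangle+\langle Hs,s\rangle\leq 0$) to obtain the $\|\nabla_\Omega^I f(x^k)\|^{3/2}$ bound with the stated constants.

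\textbf{Expected main obstacle.} The bookkeeping of constants — matching $\frac{\rho}{2^{5/2}}(\eta/L_g)^{3/2}\sqrt{3(1-2\rho)/L_H}$ and $\frac{\rho}{2[(L_g/\eta)+1]^{3/2}}\sqrt{6(1-\rho)/L_H}$ exactly — is the fiddly part; it requires carefully combining the $\max\{\cdot,\cdot\}$ in~\eqref{eq:final2}/\eqref{eq:2.22extra} with the bound $\alpha_0\geq\min\{1,2(1-\rho)\eta/L_g\}$ (resp. $\min\{1,2(1-\rho)/L_g\}$) and with the upper bounds on $\|d^k\|$. The count of function evaluations follows as in Lemma~\ref{lem:2.1}: $\ell_k+1$ evaluations at Step~3, and from $(0.5)^{\ell_k+1}\geq$ the minimum of the halved stepsize bounds across the two relevant sub-cases one gets $\ell_k+1\leq|\log_2(\min\{(1-\rho)\eta/(2L_g),(1-\rho)a_2/(2L_ga_1^2)\})|$; here the $\sqrt{\cdot}$ terms from Lemmas~\ref{lem:2.2final}--\ref{lem:2.3extra} need not appear because one also has the linear-in-$\alpha$ Taylor bound~\eqref{eq:final7}/\eqref{eq:2.26extra} available, which gives the $(1-\rho)\eta/L_g$-type lower bound uniformly.
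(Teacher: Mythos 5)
Your overall architecture does match the paper's proof (case split on $(\texttt{Flag},\texttt{D}_{\texttt{type}}^{k})$, stepsize lower bounds from Lemmas~\ref{lem:2.0}, \ref{lem:2.2final} and \ref{lem:2.3extra}, conversion of the Armijo decrease, and an evaluation count based on the linear, $L_g$-type parts of those bounds), but two steps do not go through as written. First, you never use the fact that Step~4 of Algorithm~5 is executed only when $\ell_k>0$, and this is exactly what yields the stated constants: since $\ell_k>0$, the stepsize $(0.5)^{\ell_k-1}\alpha_0$ violates the Armijo condition, so the cited lemmas apply to it directly and give, e.g., $(0.5)^{\ell_k}\alpha_0>(1-\rho)a_2/(L_ga_1^2)$ in the $\texttt{Flag}=1$ case, with no minimum against $1$. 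Your substitute, $(0.5)^{\ell_k}\alpha_0\geq\min\{1,(1-\rho)a_2/(L_ga_1^2)\}$ followed by the claim that $\min\{1,(1-\rho)a_2/(L_ga_1^2)\}\,a_2\geq(1-\rho)(a_2/a_1)^2/L_g$ because $a_1\geq1$, is false whenever $(1-\rho)a_2>L_ga_1^2$ (then the minimum equals $1$ and $a_2<(1-\rho)a_2^2/(L_ga_1^2)$), so the first branch of \eqref{eq:edg1} is not obtained. The same omission matters in the $\texttt{Flag}=0$ branches: the accepted step inherits the $\sqrt{\,\cdot\,/(L_H\|d^k\|)}$ lower bound only through the preceding \emph{failed} Armijo trial, which exists precisely because $\ell_k>0$; if $\ell_k=0$ with $\alpha_0=1$ were allowed, there would be no violated inequality to feed into Lemmas~\ref{lem:2.2final}--\ref{lem:2.3extra}.

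Second, in the $\texttt{Flag}=0$, $\texttt{D}_{\texttt{type}}^{k}=\texttt{`SOL'}$ case your conversion relies on the upper bound $\|d^k\|\leq\|\nabla f_{x^k}(0)\|/\eta$, which points the wrong way (your earlier phrase ``control $\|d^k\|$ from above'' is the same slip). The decrease you obtain is of the form $c\,\eta^{3/2}\sqrt{(1-2\rho)/L_H}\,\|d^k\|^{3/2}$, an increasing function of $\|d^k\|$, so what is needed is a \emph{lower} bound on $\|d^k\|$ in terms of the gradient. The paper gets $\|\nabla f_{x^k}(0)\|\leq\|r^k\|+\|\nabla^{2}f_{x^k}(0)d^k\|\leq 2L_g\|d^k\|$, where $\|r^k\|\leq L_g\|d^k\|$ follows from the $\texttt{Flag}=0$ certificates $\langle\nabla^{2}f_{x^k}(0)r^k,r^k\rangle\geq\eta_k\|r^k\|^2$ and $\|\nabla^{2}f_{x^k}(0)r^k\|\leq\eta_k\|\nabla^{2}f_{x^k}(0)d^k\|$ together with A1; this inequality is absent from your argument and is precisely the source of the factor $2^{-5/2}(\eta/L_g)^{3/2}$ in the second branch, so your \texttt{`SOL'} case cannot reach the stated constant. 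Your \texttt{`NPC'} branch does state the correct lower bound $\|d^k\|\geq\|\nabla f_{x^k}(0)\|/(L_g/\eta+1)$, but its justification should invoke the $\texttt{Flag}=0$ condition $\|\nabla^{2}f_{x^k}(0)r^k\|>\eta_k\|\nabla^{2}f_{x^k}(0)s^k\|$ together with $\nabla f_{x^k}(0)+r^k=-\nabla^{2}f_{x^k}(0)s^k$, not generic residual bounds. The evaluation-count paragraph is essentially the paper's argument and is fine once the $\ell_k>0$ point is in place.
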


\begin{proof}
Since $x^{k+1}$ is computed by Step 4 of Algorithm 5, it follows that $\ell_{k}>0$, and so
\begin{equation}
f_{x^{k}}((0.5)^{\ell_{k}-1}\alpha_{0}d^{k})>f_{x^{k}}(0)+\rho(0.5)^{\ell_{k}-1}\alpha_{0}\langle\nabla f_{x^{k}}(0),d^{k}\rangle.
\label{eq:edg2}
\end{equation}
Let us analyse separately the possible cases.
\\[0.2cm]
\noindent\textbf{Case 1:} $\texttt{Flag}=1$.
\\[0.2cm]
\noindent By Step 1 of Algorithm 5, $d^{k}$ is defined by the same correction procedure employed in Algorithm 2. Consequently, by Lemma 2.1 we have
\begin{equation}
\|d^{k}\|\leq a_{1}\|\nabla f_{x^{k}}(0)\|\quad\text{and}\quad \langle\nabla f_{x^{k}}(0),d^{k}\rangle\leq -a_{2}\|\nabla f_{x^{k}}(0)\|^{2}.
\label{eq:edg3}
\end{equation}
Then, by A1, Lemma 2.2 also applies to $d^{k}$. In particular, it follows from (\ref{eq:edg2}) that
\begin{equation}
(0.5)^{\ell_{k}-1}\alpha_{0}> 2(1-\rho)a_{2} / (L_{g}a_{1}^{2}).
\label{eq:edg4}
\end{equation}
Now, combining (\ref{eq:fx}), (\ref{eq:2.21}), the second inequality in (\ref{eq:edg3}), and (\ref{eq:edg4}), we obtain
\small
\begin{eqnarray*}
f(x^{k})-f(x^{k+1})&=& f_{x^{k}}(0)-f_{x^{k}}((0.5)^{\ell_{k}}\alpha_{0}d^{k})\geq\rho (0.5)^{\ell_{k}}\alpha_{0}\left(-\langle\nabla f_{x^{k}}(0),d^{k}\rangle\right)\\
&\geq &\frac{\rho}{2}(0.5)^{\ell_{k}-1}\alpha_{0}a_{2}\|\nabla f_{x^{k}}(0)\|^{2}\geq\frac{\rho(1-\rho)}{L_{g}}\left(\frac{a_{2}}{a_{1}}\right)^{2}\|\nabla f_{x^{k}}(0)\|^{2}\\
&\geq &\frac{\rho (1-\rho)}{L_{g}}\left(\frac{a_{2}}{a_{1}}\right)^{2}\|\nabla_{\Omega}^{I}f(x^{k})\|^{2},
\end{eqnarray*}
\normalsize
that is, (\ref{eq:edg1}) holds in this case.
\\[0.2cm]
\noindent\textbf{Case 2:} $\texttt{Flag}=0$ and $\texttt{D}_{\texttt{type}}^{k}=\texttt{`SOL'}$.
\\[0.2cm]
\noindent In this case, it follows from (\ref{eq:edg2}) and Lemma 2.7 that
\small
\begin{equation}
(0.5)^{\ell_{k}-1}\alpha_{0}>\max\left\{\sqrt{3(1-2\rho)\eta/(L_{H}\|d^{k})\|},2(1-\rho)\eta/L_{g}\right\}.
\label{eq:edg5}
\end{equation}
\normalsize
In addition, by (\ref{eq:flag}), (\ref{eq:minres1})-(\ref{eq:minres3}), and $\eta_{k}\geq\eta$, we also have 
\begin{eqnarray}
\langle\nabla f_{x^{k}}(0),d^{k}\rangle&\leq &-\langle\nabla^{2}f_{x^{k}}(0)d^{k},d^{k}\rangle,
\label{eq:edg6}\\
\langle\nabla^{2}f_{x^{k}}(0)d^{k},d^{k}\rangle&\geq&\eta\|d^{k}\|^{2},
\label{eq:edg7}\\
\langle\nabla^{2}f_{x^{k}}(0)r^{k},r^{k}\rangle&\geq&\eta\|r^{k}\|^{2},
\label{eq:edg8}\\
\|\nabla^{2}f_{x^{k}}(0)r^{k}\|&\leq&\eta_{k}\|\nabla^{2}f_{x^{k}}(0)d^{k}\|.
\label{eq:edg9}
\end{eqnarray}
Combining (\ref{eq:fx}), (\ref{eq:2.21}), (\ref{eq:edg6}) and (\ref{eq:edg7}), it follows that
\begin{eqnarray}
    f(x^{k})-f(x^{k+1})&=&f_{x^{k}}(0)-f_{x^{k}}((0.5)^{\ell_{k}}\alpha_{0}d^{k})\geq \rho (0.5)^{\ell_{k}}\alpha_{0}\left(-\langle\nabla f_{x^{k}}(0),d^{k}\rangle\right)\nonumber\\
    &\geq &\frac{\rho}{2}(0.5)^{\ell_{k}-1}\alpha_{0}\langle\nabla^{2}f_{x^{k}}(0)d^{k},d^{k}\rangle\geq \frac{\eta\rho}{2}\sqrt{\frac{3(1-2\rho)\eta}{L_{H}\|d^{k}\|}}\|d^{k}\|^{2}.
\label{eq:edg10}
\end{eqnarray}
On the other hand, by (\ref{eq:edg8}) we have
\[
\|r^{k}\|^{2}=\frac{\|r^{k}\|^{2}}{\langle\nabla^{2}f_{x^{k}}(0),r^{k}\rangle}\langle\nabla^{2}f_{x^{k}}(0)r^{k},r^{k}\rangle\leq\frac{\|\nabla^{2}f_{x^{k}}(0)r^{k}\|\|r^{k}\|}{\eta_{k}}.
\]
Then, dividing both sides by $\|r^{k}\|$ and using (\ref{eq:edg9}) and A1 we get
\[
\|r^{k}\|\leq\frac{\|\nabla^{2}f_{x^{k}}(0)r^{k}\|}{\eta_{k}}\leq\|\nabla^{2}f_{x^{k}}(0)d^{k}\|\leq\|\nabla^{2}f_{x^{k}}(0)\|\|d^{k}\|\leq L_{g}\|d^{k}\|.
\]
Thus,
\begin{eqnarray}
\|\nabla f_{x^{k}}(0)\|&\leq&
\|r^{k}\|+\|\nabla^{2}f_{x^{k}}(0)d^{k}\|\leq 2L_{g}\|d^{k}\|.
\label{eq:edg11}
\end{eqnarray}
Combining (\ref{eq:edg10}) and (\ref{eq:edg11}), it follows that
\small
\begin{eqnarray*}
    f(x^{k})-f(x^{k+1})&\geq &\frac{\eta\rho}{2}\sqrt{\frac{3(1-2\rho)\eta}{L_{H}\|d^{k}\|}}\frac{\|\nabla f_{x^{k}}(0)\|^{3/2}}{(2L_{g})^{3/2}}\\
    &\geq & \frac{\rho}{2^{5/2}}\left(\frac{\eta}{L_{g}}\right)^{3/2}\sqrt{\frac{3(1-2\rho)}{L_{H}}}\|\nabla_{\Omega}^{I}f(x^{k})\|^{3/2},
\end{eqnarray*}
\normalsize
that is, (\ref{eq:edg1}) holds in this case.
\\[0.2cm]
\noindent\textbf{Case 3:} $\texttt{Flag}=0$ and $\texttt{D}_{\texttt{type}}^{k}=\texttt{`NPC'}$.
\\[0.2cm]
\noindent In this case, it follows from (\ref{eq:edg2}) and Lemma 2.8 that
\begin{equation}
(0.5)^{\ell_{k}-1}\alpha_{0}>\max\left\{\sqrt{6(1-\rho)/(L_{H}\|d^{k}\|)},2(1-\rho)/L_{g}\right\}.
\label{eq:edg12}
\end{equation}
In addition, by (\ref{eq:flag}) and (\ref{eq:minres4}) we have
\begin{equation}
    \langle\nabla f_{x^{k}}(0),d^{k}\rangle=-\|d^{k}\|^{2},
    \label{eq:edg13}
\end{equation}
and
\begin{equation}
\|\nabla^{2}f_{x^{k}}(0)d^{k}\|>\eta_{k}\|\nabla^{2}f_{x^{k}}(0)s^{k}\|\geq \eta\|\nabla^{2}f_{x^{k}}(0)s^{k}\| .
    \label{eq:edg14}
\end{equation}
Combining (\ref{eq:fx}), (\ref{eq:2.3}), (\ref{eq:edg13}) and (\ref{eq:edg12}), it follows that
\small
\begin{eqnarray}
f(x^{k})-f(x^{k+1})&\geq& 
\rho (0.5)^{\ell_{k}}\alpha\left(-\langle\nabla f_{x^{k}}(0),d^{k}\rangle\right)=\frac{\rho}{2}(0.5)^{\ell_{k}-1}\alpha_{0}\|d^{k}\|^{2}\nonumber\\
&\geq &(\rho/2)\sqrt{6(1-\rho)/L_{H}}\|d^{k}\|^{3/2}.
\label{eq:edg15}
\end{eqnarray}
\normalsize
On the other hand, by (\ref{eq:edg14}) and A1, we have
\small
\begin{equation} \label{eq:edg16}
\begin{array}{lclcl}
\|\nabla f_{x^{k}}(0)\| & \leq & \|\nabla f_{x^{k}}(0)+r^{k}\|+\|r^{k}\| & = & \|\nabla^{2} f_{x^{k}}(0)s^{k}\|+\|d^{k}\| \\[2mm]
& < & \|\nabla^{2} f_{x^{k}}(0)d^{k}\| / \eta+\|d^{k}\| & \leq & \|\nabla^{2}f_{x^{k}}(0)\|\|d^{k}\| / \eta +\|d^{k}\| \\[2mm]
& \leq & (L_{g}/\eta+1) \|d^{k}\|. & & 
\end{array}
\end{equation}
\normalsize
Combining (\ref{eq:edg15}) and (\ref{eq:edg16}), it follows that
\small
\[
f(x^{k})-f(x^{k+1}) 
\geq 
\frac{\rho}{2}\sqrt{\frac{6(1-\rho)}{L_{H}}}\frac{\|\nabla f_{x^{k}}(0)\|^{3/2}}{(L_{g}/\eta+1)^{3/2}}
\geq \frac{\rho}{2(L_{g}/\eta+1)^{3/2}}\sqrt{\frac{6(1-\rho)}{L_{H}}}\|\nabla_{\Omega}^{I}f(x^{k})\|^{3/2},
\]
\normalsize
that is, (\ref{eq:edg1}) also holds in this case.
\\[0.2cm]
To conclude, notice that the number of evaluations of $f(\,\cdot\,)$ performed at Step 3 of Algorithm 5 is equal to $\ell_{k}+1$. Since $\alpha_{0}\in (0,1]$, it follows from (\ref{eq:edg3}), (\ref{eq:edg5}) and (\ref{eq:edg12}) that
\[
(0.5)^{\ell_{k}+1}\geq (0.5)^{\ell_{k}+1}\alpha_{0}>\min\left\{(1-\rho)\eta/(2L_{g}), (1-\rho)a_{2}/(2L_{g}a_{1}^{2}),(1-\rho)/(2L_{g})\right\}.
\]
Then, taking the logarithm on both sides and using the fact that $\eta\in (0,1]$, we obtain
\[
\ell_{k}+1\leq\left|\log_{2}\left(\min\left\{(1-\rho)\eta/(2L_{g}),(1-\rho)a_{2}/(2L_{g}a_{1}^{2})\right\}\right)\right|.
\]
\end{proof}

The lemma below provides an upper bound on the maximum stepsize that satisfies the Armijo condition when the direction $d^{k}$ is certified with $\texttt{Flag}=0$. This result will be used in the sequel to derive an upper bound on the number of function evaluations required in the extrapolation procedure in Step 6 of Algorithm 5.

\begin{lemma}
\label{lem:2.4extra}
Suppose that A1-A3 hold and let $x^{k}$ be an iterate generated by Algorithm 4 such that $\|\nabla_{\Omega}^{I}f(x^{k})\|\geq\theta\|\nabla_{\Omega} f(x^{k})\|$. Suppose that $d^{k}$ is computed by Algorithm 5 and that $\texttt{Flag}=0$. Given $\alpha>0$, if $\|\nabla_{\Omega} f(x^{k})\|\geq\epsilon$ and
\begin{equation}
f_{x^{k}}(\alpha d^{k})\leq f_{x^{k}}(0)+\rho\alpha\langle\nabla f_{x^{k}}(0),d^{k}\rangle,
\label{eq:2.28extra}
\end{equation}
then
\begin{equation}
\alpha\leq \max\left\{2 L_{g}/\eta,(L_{g}/\eta+1)^{2}\right\}(f(x^{0})-f_{\mathrm{low}})\rho^{-1}\theta^{-2}\epsilon^{-2}.
\label{eq:2.29extra}    
\end{equation}
\end{lemma}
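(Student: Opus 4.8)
The plan is to convert the hypothesis \eqref{eq:2.28extra} into a lower bound on the functional decrease produced by the step $\alpha d^k$, and then to bound $-\langle\nabla f_{x^k}(0),d^k\rangle$ from below by a constant multiple of $\|\nabla_{\Omega}f(x^k)\|^{2}$. First I would rewrite \eqref{eq:2.28extra} as $\rho\,\alpha\bigl(-\langle\nabla f_{x^k}(0),d^k\rangle\bigr)\le f_{x^k}(0)-f_{x^k}(\alpha d^k)$. Because every branch of Algorithms~5 and~6 returns an iterate with $f(x^{j+1})\le f(x^{j})$, the sequence generated by Algorithm~4 is monotone, so $f_{x^k}(0)=f(x^k)\le f(x^{0})$; and $f_{x^k}(\alpha d^k)=f\bigl(x^k+\alpha Q(x^k)d^k\bigr)\ge f_{\mathrm{low}}$ by~A2, irrespective of the feasibility of that point. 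Hence $\alpha\le (f(x^{0})-f_{\mathrm{low}})\,\rho^{-1}\bigl(-\langle\nabla f_{x^k}(0),d^k\rangle\bigr)^{-1}$, and everything reduces to bounding the slope from below.

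Next I would lower bound $-\langle\nabla f_{x^k}(0),d^k\rangle$, using that $\texttt{Flag}=0$ forces $d^k=d_{1}^{k}$ and splitting on $\texttt{D}_{\texttt{type}}^{k}$ exactly as in the proof of Lemma~\ref{lem:edg1}. If $\texttt{D}_{\texttt{type}}^{k}=\texttt{`SOL'}$ then $d^k=s^{k}$, so \eqref{eq:minres1} together with the definition \eqref{eq:flag} gives $-\langle\nabla f_{x^k}(0),d^k\rangle\ge\langle\nabla^{2}f_{x^k}(0)d^k,d^k\rangle\ge\eta\|d^k\|^{2}$, while the estimate \eqref{eq:edg11} (whose derivation uses only $\texttt{Flag}=0$, $\texttt{D}_{\texttt{type}}^{k}=\texttt{`SOL'}$ and~A1) gives $\|\nabla f_{x^k}(0)\|\le 2L_{g}\|d^k\|$; combining the two, $-\langle\nabla f_{x^k}(0),d^k\rangle\ge c_{\mathrm{SOL}}\|\nabla f_{x^k}(0)\|^{2}$. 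If $\texttt{D}_{\texttt{type}}^{k}=\texttt{`NPC'}$ then $d^k=r^{k}$, so \eqref{eq:minres4} gives $-\langle\nabla f_{x^k}(0),d^k\rangle=\|d^k\|^{2}$ and \eqref{eq:edg16} gives $\|\nabla f_{x^k}(0)\|\le(L_{g}/\eta+1)\|d^k\|$, whence $-\langle\nabla f_{x^k}(0),d^k\rangle\ge(L_{g}/\eta+1)^{-2}\|\nabla f_{x^k}(0)\|^{2}$. Taking the worse of the two explicit constants then yields $-\langle\nabla f_{x^k}(0),d^k\rangle\ge\kappa\|\nabla f_{x^k}(0)\|^{2}$, with $\kappa^{-1}$ of the form displayed in \eqref{eq:2.29extra}.

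Finally I would inject the stationarity hypotheses. By \eqref{eq:gx}, $\|\nabla f_{x^k}(0)\|=\|Q(x^k)^{T}\nabla f(x^k)\|\ge\|\nabla_{\Omega}^{I}f(x^k)\|$, since for $i\in\mathcal{I}(x^k)$ the $i$-th component of $\nabla_{\Omega}f(x^k)$ is dominated in absolute value by that of $\nabla f(x^k)$ (the same fact used in the proof of Lemma~\ref{lem:edg1}); then $\|\nabla_{\Omega}^{I}f(x^k)\|\ge\theta\|\nabla_{\Omega}f(x^k)\|\ge\theta\epsilon$ by the two standing hypotheses of the lemma. Substituting $-\langle\nabla f_{x^k}(0),d^k\rangle\ge\kappa(\theta\epsilon)^{2}$ into the bound for $\alpha$ from the first paragraph gives \eqref{eq:2.29extra}.

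I expect the only delicate part to be the second step: producing the case-uniform estimate $-\langle\nabla f_{x^k}(0),d^k\rangle\gtrsim\|\nabla f_{x^k}(0)\|^{2}$ with the correct constant, which amounts to re-running and carefully collecting the constants in the two inequality chains already established inside the proof of Lemma~\ref{lem:edg1}. The one genuinely new ingredient, small but essential, is the monotonicity of the whole Algorithm~4, which is what lets $f(x^k)$ be replaced by $f(x^{0})$ in the decrease bound; the remainder is routine.
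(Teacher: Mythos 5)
Your proposal is correct and takes essentially the same route as the paper's proof: it bounds $\rho\alpha\bigl(-\langle\nabla f_{x^{k}}(0),d^{k}\rangle\bigr)$ by $f(x^{0})-f_{\mathrm{low}}$ via the Armijo inequality, the monotonicity of Algorithm~4 and A2, then lower-bounds the slope by splitting on $\texttt{D}_{\texttt{type}}^{k}$ using \eqref{eq:minres1}, \eqref{eq:flag} and \eqref{eq:edg11} in the \texttt{`SOL'} case and \eqref{eq:minres4} and \eqref{eq:edg16} in the \texttt{`NPC'} case, finishing with $\|\nabla f_{x^{k}}(0)\|\geq\|\nabla_{\Omega}^{I}f(x^{k})\|\geq\theta\epsilon$. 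The only caveat is constant bookkeeping in the \texttt{`SOL'} branch, where the chain $\eta\|d^{k}\|^{2}$ combined with $\|\nabla f_{x^{k}}(0)\|\leq 2L_{g}\|d^{k}\|$ literally yields $4L_{g}^{2}/\eta$ rather than the $2L_{g}/\eta$ appearing in \eqref{eq:2.29extra} (a looseness shared by the paper's own proof), which affects only the constant and not the $\mathcal{O}(\epsilon^{-2})$ bound or its use in Lemma~\ref{lem:2.6extra}.
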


\begin{proof}
If $\texttt{D}_{\texttt{type}}^{k}=\texttt{`SOL'}$, then it follows from (\ref{eq:2.28extra}), (\ref{eq:edg6}), (\ref{eq:edg7}) and (\ref{eq:edg11}) that
\[
\begin{array}{lclcl}
f(x^{0})-f_{\mathrm{low}} & \geq & f_{x^{k}}(0)-f_{x^{k}}(\alpha d^{k}) & \geq & \rho\alpha\left(-\langle\nabla f_{x^{k}}(0),d^{k}\rangle\right) \\[2mm]
& \geq & \rho\alpha\langle\nabla^{2}f_{x^{k}}(0)d^{k},d^{k}\rangle & \geq &  \rho\alpha\eta\|d^{k}\|^{2}\\[2mm]
& \geq & (\rho/2)(\eta/L_g)\alpha\|\nabla_{\Omega}^{I}f(x^{k})\|^{2} & \geq & (\theta^2 \rho / 2)(\eta/L_g)\epsilon^{2}\alpha.
\end{array}
\]

Thus, $\alpha\leq 2(L_g/\eta)\left(f(x^{0})-f_{\mathrm{low}}\right)\rho^{-1}\theta^{-2}\epsilon^{-2}$, which means that (\ref{eq:2.29extra}) holds in this case. 

Now, suppose that $\texttt{D}_{\texttt{type}}^{k}=\texttt{`NPC'}$. Then, by (\ref{eq:2.28extra}), (\ref{eq:edg13}) and (\ref{eq:edg16}) we have
\[
f(x^{0})-f_{\mathrm{low}} \geq \rho\alpha\|d^{k}\|^{2}\geq \frac{\rho\alpha}{(L_g/\eta+1)^2}\|\nabla f_{x^{k}}(0)\|^{2}=\frac{\rho\alpha}{(L_g/\eta+1)^2}\|\nabla_{\Omega}^{I}f(x^{k})\|^{2}
\]
\[
\geq \frac{\rho\alpha}{(L_g/\eta+1)^2}\theta^{2}\|\nabla_{\Omega}f(x^{k})\|^{2}\geq\frac{\rho\theta^{2}}{(L_g/\eta+1)^2}\epsilon^{2}\alpha.
\]
Therefore, $\alpha\leq \left(L_{g}/\eta+1\right)^{2}\left(f(x^{0})-f_{\mathrm{low}}\right)\rho^{-1}\theta^{-2}\epsilon^{-2}$, which means that (\ref{eq:2.29extra}) also holds in this case. 
\end{proof}

The next lemma gives an upper bound of $\mathcal{O}\left(|\log_{2}(\epsilon)|\right)$ for the number of evaluations of $f(\,\cdot\,)$ required by the extrapolation procedure in Step 6 of Algorithm 5.

\begin{lemma}
\label{lem:2.6extra}
Suppose that assumptions A1-A3 hold, and that $x^{k+1}$ is computed in Step 6 of Algorithm 5. Then, the number of evaluations of $f(\,\cdot\,)$ required to guarantee the fulfilment of (\ref{eq:extrapolation}) is bounded from above by
\begin{equation}
1+\log_{2}\left(\max\left\{L_{g}/\eta,\left(L_{g}/\eta+1\right)^{2}\right\}(f(x^{0})-f_{\mathrm{low}})\rho^{-1}\theta^{-2}\epsilon^{-2}\right).
\label{eq:bound}
\end{equation}
\end{lemma}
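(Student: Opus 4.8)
The plan is to reduce the whole estimate to Lemma~\ref{lem:2.4extra}. First I would check that reaching Step~6 of Algorithm~5 puts us exactly in the setting covered by that lemma. Step~6 is entered only when $\texttt{Flag}=0$ (Step~5 stops the algorithm whenever $\texttt{Flag}=1$); moreover, since Algorithm~5 was invoked from Step~2 of Algorithm~4 we have $\|\nabla_{\Omega}^{I}f(x^{k})\|\geq\theta\|\nabla_{\Omega}f(x^{k})\|$, and since iteration $k$ of Algorithm~4 was not terminated at its Step~1 we have $\|\nabla_{\Omega}f(x^{k})\|>\epsilon$. Hence the hypotheses of Lemma~\ref{lem:2.4extra} hold for the direction $d^{k}$ produced by Algorithm~5.

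Next I would identify the evaluation count. The search in Step~6 evaluates $f$ at a point $x^{k}+2^{j}\alpha_{0}Q(x^{k})d^{k}$ only for those indices $j\geq 1$ that pass the feasibility test $x^{k}+2^{j}\alpha_{0}Q(x^{k})d^{k}\in\Omega$; the instance $j=0$ of (\ref{eq:extrapolation}) coincides with (\ref{eq:2.21}) for $\ell_{k}=0$, already verified in Step~3, so it costs nothing here. Since (\ref{eq:extrapolation}) holds at $j=j_{k}$ and fails at $j=j_{k}+1$, the loop performs at most $j_{k}+1$ evaluations of $f$: one for each $j\in\{1,\dots,j_{k}\}$, plus possibly one more at $j=j_{k}+1$ if the feasibility test happens to pass there. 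It therefore suffices to bound $j_{k}$.

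I would then split on the value of $\alpha_{0}$. If $\alpha_{0}=t_{\max}<1$ (the value set in Step~2.3), the feasible steplengths along $Q(x^{k})d^{k}$ starting from $x^{k}$ form an interval $[0,t^{*}]$ with $t^{*}=t_{\max}<1$; hence $x^{k}+2\alpha_{0}Q(x^{k})d^{k}\notin\Omega$, so (\ref{eq:extrapolation}) already fails at $j=1$, giving $j_{k}=0$ at the cost of no evaluation of $f$, and (\ref{eq:bound}) holds trivially. If $\alpha_{0}=1$, the Armijo part of (\ref{eq:extrapolation}) at $j=j_{k}$ is exactly inequality (\ref{eq:2.28extra}) with $\alpha=2^{j_{k}}$, so Lemma~\ref{lem:2.4extra} gives $2^{j_{k}}\leq\max\{2L_{g}/\eta,(L_{g}/\eta+1)^{2}\}(f(x^{0})-f_{\mathrm{low}})\rho^{-1}\theta^{-2}\epsilon^{-2}$. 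Taking base-2 logarithms, and using $\max\{2L_{g}/\eta,(L_{g}/\eta+1)^{2}\}=\max\{L_{g}/\eta,(L_{g}/\eta+1)^{2}\}=(L_{g}/\eta+1)^{2}$, we obtain $j_{k}\leq\log_{2}\!\bigl(\max\{L_{g}/\eta,(L_{g}/\eta+1)^{2}\}(f(x^{0})-f_{\mathrm{low}})\rho^{-1}\theta^{-2}\epsilon^{-2}\bigr)$, and combining with the count $j_{k}+1$ from the previous paragraph yields (\ref{eq:bound}).

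The computational content is entirely in Lemma~\ref{lem:2.4extra}; the only genuinely delicate points are the bookkeeping of which evaluations the extrapolation loop actually spends (the free $j=0$ entry, and the terminal $j=j_{k}+1$ test that may be free when the feasibility check fails) and the degenerate case $\alpha_{0}<1$, in which doubling the step immediately leaves $\Omega$ so that no extrapolation, and no further evaluation, takes place.
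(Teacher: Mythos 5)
Your argument is essentially the paper's: both proofs reduce everything to Lemma~\ref{lem:2.4extra}, verify its hypotheses from the structure of Algorithms~4 and~5 (Flag $=0$ because Step~5 would otherwise have stopped, $\|\nabla_{\Omega}^{I}f(x^{k})\|\geq\theta\|\nabla_{\Omega}f(x^{k})\|$ and $\|\nabla_{\Omega}f(x^{k})\|>\epsilon$ because Algorithm~5 was invoked), apply the lemma with $\alpha=2^{j_{k}}\alpha_{0}$ to the Armijo inequality contained in (\ref{eq:extrapolation}) at $j=j_{k}$, and take base-2 logarithms; your explicit remark that $\max\{2L_{g}/\eta,(L_{g}/\eta+1)^{2}\}=\max\{L_{g}/\eta,(L_{g}/\eta+1)^{2}\}$ since $(a+1)^{2}\geq 2a$ is a reconciliation the paper leaves implicit. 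The one place you diverge is the treatment of $\alpha_{0}$: the paper shows the case $\alpha_{0}=t_{\max}$ simply cannot occur at Step~6, because reaching Step~6 means $\ell_{k}=0$, so the Armijo inequality (\ref{eq:2.21}) holds at $\alpha_{0}$ and hence $f_{x^{k}}(\alpha_{0}d^{k})\leq f_{x^{k}}(0)$, which contradicts the fact that Step~2.3 sets $\alpha_{0}=t_{\max}$ only when $f_{x^{k}}(t_{\max}d^{k})>f_{x^{k}}(0)$; thus $\alpha_{0}=1$ always, and your second branch is the whole proof. You instead keep the branch $\alpha_{0}=t_{\max}<1$ alive and dispose of it by the feasibility argument ($2t_{\max}$ infeasible, so $j_{k}=0$ with no new evaluation), asserting that (\ref{eq:bound}) then "holds trivially." That last assertion is the only loose point: with zero (or one) evaluations you still need the right-hand side of (\ref{eq:bound}) to be nonnegative, which is not literally trivial; it does hold (e.g.\ A1--A2 and $\|\nabla_{\Omega}f(x^{k})\|>\epsilon$ give $f(x^{0})-f_{\mathrm{low}}\geq\|\nabla f(x^{k})\|^{2}/(2L_{g})>\epsilon^{2}/(2L_{g})$, so the argument of the logarithm exceeds $1$), but either supply such a one-line justification or, more cleanly, adopt the paper's observation that this branch is vacuous. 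With that repair your proof is complete and matches the paper's in all essentials; your bookkeeping of which evaluations the extrapolation loop actually spends is in fact slightly more careful than the paper's blanket count of $j_{k}+1$.
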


\begin{proof}
The execution of Step 6 of Algorithm 5 implies that $f_{x^{k}}(\alpha_{0}d^{k})\leq f_{x^{k}}(0)$. Therefore, $\alpha_{0}=1$. Indeed, if we had $\alpha_{0}=t_{\max}<1$, then we would have had $f_{x^{k}}(t_{\max}d^{k})\leq f_{x^{k}}(0)$, and the method would have stopped at Step 2.3, which was not the case. Notice that the number of evaluations of $f(\,\cdot\,)$ required to fulfill (\ref{eq:extrapolation}) is equal to $j_{k}+1$. Since $x^{k+1}$ is computed by Algorithm 5, we must have had $\|\nabla_{\Omega}^{I}f(x^{k})\|>\theta\|\nabla_{\Omega} f(x^{k})\|$, and $\|\nabla_{\Omega} f(x^{k})\|>\epsilon$. Moreover, by design, we also have $f(x^{k})\leq f(x^{0})$. Then, the definition of $j_{k}$, $\alpha_{0}=1$, and Lemma \ref{lem:2.4extra} imply that
\[
2^{j_{k}}=2^{j_{k}}\alpha_{0}\leq \max\left\{L_{g}/\eta,\left(L_{g}/\eta+1\right)^{2}\right\}(f(x^{0})-f_{\mathrm{low}})\rho^{-1}\theta^{-2}\epsilon^{-2}.
\]
and so, taking the logarithm, we conclude that $j_{k}+1$ is bounded from above by the number in~(\ref{eq:bound}).
\end{proof}

The following two lemmas establish lower bounds of $\mathcal{O}\left(\|\nabla_{\Omega}^{I}f(x^{k})\|^{3/2}\right)$ for the functional decrease $f(x^{k})-f(x^{k+1})$ obtained when $x^{k+1}$ is computed in Steps 6.1 and 6.2 of Algorithm 5.

\begin{lemma}
Suppose that A1 and A3 hold. Then, whenever $x^{k+1}$ is computed in Step 6.1 of Algorithm 5, we have
\[
f(x^{k})-f(x^{k+1})\geq\left\{\begin{array}{ll}
\frac{\rho}{2^{5/2}}\left(\frac{\eta}{L_{g}}\right)^{3/2}\sqrt{\frac{3(1-2\rho)}{L_{H}}}\|\nabla_{\Omega}^{I}f(x^{k})\|^{3/2},&\text{if $\texttt{D}_{\texttt{type}}^{k}=\texttt{`SOL'}$,}\\
\frac{\rho}{2\left[\left(\frac{L_{g}}{\eta}\right)+1\right]^{3/2}}\sqrt{\frac{6(1-\rho)}{L_{H}}}\|\nabla_{\Omega}^{I}f(x^{k})\|^{3/2},&\text{if $\texttt{D}_{\texttt{type}}^{k}=\texttt{`NPC'}$.}
\end{array}
\right.
\]
\end{lemma}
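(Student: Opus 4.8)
The plan is to mimic the proof of Lemma~\ref{lem:edg1} (Cases 2 and 3), but now starting from the fact that $x^{k+1}$ is produced in Step~6.1 rather than Step~4. When $x^{k+1}$ is computed in Step~6.1 we have $x^{k+1}=x^{k}+2^{j_{k}}\alpha_{0}Q(x^{k})d^{k}$, where $j_{k}$ is chosen so that the Armijo-type inequality \eqref{eq:extrapolation} holds with $j=j_{k}$; that is,
\[
f_{x^{k}}(2^{j_{k}}\alpha_{0}d^{k})\leq f_{x^{k}}(0)+\rho\,2^{j_{k}}\alpha_{0}\langle\nabla f_{x^{k}}(0),d^{k}\rangle .
\]
Since $2^{j_{k}}\alpha_{0}\geq\alpha_{0}\geq\tfrac12(0.5)^{\ell_{k}-1}\alpha_{0}$ is not what we want — instead I would argue directly that the extrapolated step is at least as large as a single Armijo step would have been. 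More precisely, because Step~6 is only reached when the Armijo condition \emph{is} satisfied at $\alpha_{0}$ (Step~4 was not triggered), and because the extrapolation only enlarges the stepsize, we have $2^{j_{k}}\alpha_{0}\geq\alpha_{0}$. Moreover, as in the proof of Lemma~\ref{lem:2.6extra}, the very fact that Step~6 is executed forces $\alpha_{0}=1$. Hence $2^{j_{k}}\alpha_{0}\geq 1$.

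Next I would invoke the lower bounds on the stepsize that would \emph{fail} Armijo, i.e.\ Lemmas~\ref{lem:2.2final} and~\ref{lem:2.3extra}: the stepsize $2^{j_{k}+1}\alpha_{0}$ does \emph{not} satisfy \eqref{eq:extrapolation} (either because it leaves $\Omega$, which cannot happen here since Step~6.1 assumes $x^{k}+2^{j_{k}+1}\alpha_{0}Q(x^{k})d^{k}\in\Omega$, or because the Armijo inequality fails). Therefore the Armijo inequality fails at $\alpha=2^{j_{k}+1}\alpha_{0}$, and Lemma~\ref{lem:2.2final} (if $\texttt{D}_{\texttt{type}}^{k}=\texttt{`SOL'}$) or Lemma~\ref{lem:2.3extra} (if $\texttt{D}_{\texttt{type}}^{k}=\texttt{`NPC'}$) gives
\[
2^{j_{k}+1}\alpha_{0}>\sqrt{\tfrac{3(1-2\rho)\eta}{L_{H}\|d^{k}\|}}
\quad\text{resp.}\quad
2^{j_{k}+1}\alpha_{0}>\sqrt{\tfrac{6(1-\rho)}{L_{H}\|d^{k}\|}},
\]
so $2^{j_{k}}\alpha_{0}$ is bounded below by half of the corresponding right-hand side. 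From here the estimate is identical to Cases~2 and~3 of Lemma~\ref{lem:edg1}: using \eqref{eq:2.21}-type descent at the accepted step, $\langle\nabla^{2}f_{x^{k}}(0)d^{k},d^{k}\rangle\geq\eta\|d^{k}\|^{2}$ (SOL case) or $\langle\nabla f_{x^{k}}(0),d^{k}\rangle=-\|d^{k}\|^{2}$ (NPC case), together with the bounds \eqref{eq:edg11} resp.\ \eqref{eq:edg16} relating $\|\nabla f_{x^{k}}(0)\|$ to $\|d^{k}\|$, and finally $\|\nabla f_{x^{k}}(0)\|=\|Q(x^{k})^{T}\nabla f(x^{k})\|\geq\|\nabla_{\Omega}^{I}f(x^{k})\|$, yields exactly the two claimed bounds.

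The main obstacle — really the only delicate point — is establishing rigorously that the accepted stepsize $2^{j_{k}}\alpha_{0}$ is at least half of the failure threshold given by Lemmas~\ref{lem:2.2final}/\ref{lem:2.3extra}. One must be careful that those lemmas are stated for $\alpha\in(0,1]$, whereas the extrapolated stepsize may exceed $1$; so I would first check that the Taylor/Lipschitz inequalities \eqref{eq:final5}, \eqref{eq:final7}, \eqref{eq:2.24extra}, \eqref{eq:2.26extra} used in those proofs hold for \emph{all} $\alpha>0$, not just $\alpha\le1$ (they do, since A1 and A3 are global), so that the conclusion $2^{j_{k}+1}\alpha_{0}>(\cdots)$ remains valid for the larger stepsize. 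With that observation, the rest is a routine repetition of the arithmetic already carried out in Lemma~\ref{lem:edg1}, and I would simply state ``proceeding exactly as in Cases~2 and~3 of the proof of Lemma~\ref{lem:edg1}'' to avoid duplicating the computation.
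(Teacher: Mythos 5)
Your proposal is correct and follows essentially the same route as the paper's proof: feasibility of the trial point with stepsize $2^{j_{k}+1}\alpha_{0}$ in Step~6.1 forces the Armijo-type inequality in \eqref{eq:extrapolation} to fail there, Lemmas~\ref{lem:2.2final} and~\ref{lem:2.3extra} then give the lower bounds on $2^{j_{k}+1}\alpha_{0}$, and repeating the arithmetic of Cases~2 and~3 of Lemma~\ref{lem:edg1} yields exactly the stated constants. Your explicit check that the Lipschitz/Taylor estimates (and hence those two lemmas) remain valid for stepsizes exceeding $1$ is a point the paper passes over silently; the only detail you leave implicit is that reaching Step~6 forces $\texttt{Flag}=0$ (otherwise Algorithm~5 stops at Step~5), which is precisely the hypothesis needed to invoke Lemmas~\ref{lem:2.2final} and~\ref{lem:2.3extra}.
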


\begin{proof}
As $x^{k+1}$ is computed in Step 6.1, this means that $\texttt{Flag}=0$; otherwise, the Algorithm 5 would have stopped at Step 5. Since $x^{k}+2^{j_{k}+1}\alpha_{0}Q(x^{k})d^{k}\in\Omega$, it follows from the definition of $j_{k}$, that 
\begin{equation}
f_{x^{k}}(2^{j_{k}+1}\alpha_{0}d^{k})>f_{x^{k}}(0)+\rho 2^{j_{k}+1}\alpha_{0}\langle\nabla f_{x^{k}}(0),d^{k}\rangle.
\label{eq:edgfinal2}
\end{equation}
If $\texttt{D}_{\texttt{type}}^{k}=\texttt{`SOL'}$, then it follows from Lemma 2.7 that $2^{j_{k}+1}\alpha_{0}>\sqrt{\frac{3(1-2\rho)\eta}{L_{H}\|d^{k}\|}}$. Thus, by the same reasoning used in the proof of Lemma 2.9 (Case 2), we see that
\[
f(x^{k})-f(x^{k+1})\geq\frac{\rho}{2^{5/2}}\left(\frac{\eta}{L_{g}}\right)^{3/2}\sqrt{\frac{3(1-2\rho)}{L_{H}}}\|\nabla_{\Omega}^{I}f(x^{k})\|^{3/2}.
\]
On the other hand, if $\texttt{D}_{\texttt{type}}^{k}=\texttt{`NPC'}$, then it follows from (\ref{eq:edgfinal2}) and Lemma 2.8 that $2^{j_{k}+1}\alpha_{0}>\sqrt{6(1-\rho)/(L_{H}\|d^{k}\|)}$. Thus, as in the proof of Lemma 2.9 (Case 3), it follows that 
\[
f(x^{k})-f(x^{k+1}) \geq \frac{\rho}{2(L_{g}/\eta+1)^{3/2}}\sqrt{\frac{6(1-\rho)}{L_{H}}}\|\nabla_{\Omega}^{I}f(x^{k})\|^{3/2},
\]
which concludes the proof.
\end{proof}

\begin{lemma}
Suppose that A1 and A3 hold. Then, whenever $x^{k+1}$ is computed in Step 6.2 of Algorithm 5, we have
\begin{equation}
f(x^{k}) - f(x^{k+1}) \geq 
\left\{
\begin{array}{ll}
\frac{\rho}{2^{5/2}}\left(\frac{\eta}{L_{g}}\right)^{3/2}\sqrt{\frac{3(1-2\rho)}{L_{H}}}\|\nabla_{\Omega}^{I}f(x^{k})\|^{3/2}, 
& \text{if } \texttt{D}_{\texttt{type}}^{k}=\texttt{`SOL'},\\
\frac{\rho}{2(L_g/\eta+1)^{3/2}}\sqrt{\frac{6(1-\rho)}{L_{H}}}\|\nabla_{\Omega}^{I}f(x^{k})\|^{3/2}, 
& \text{if } \texttt{D}_{\texttt{type}}^{k}=\texttt{`NPC'}.
\end{array}
\right.
\label{eq:edgfinal3}
\end{equation}
\end{lemma}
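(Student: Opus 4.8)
The plan is to mirror the proof of Lemma~\ref{lem:edg1} (Cases~2 and~3) and that of the preceding lemma for Step~6.1, the only genuinely new ingredient being a lower bound on the stepsize $2^{j_{k}}\alpha_{0}$ extracted from the \emph{failure} of the Armijo condition at the trial stepsize $t_{\max}$ computed in Step~6.2. First I would record the structural facts. Since $x^{k+1}$ is produced in Step~6.2, Algorithm~5 did not stop at Step~5, so $\texttt{Flag}=0$, and therefore $d^{k}=d_{1}^{k}$, which equals $s^{k}$ when $\texttt{D}_{\texttt{type}}^{k}=\texttt{`SOL'}$ and $r^{k}$ when $\texttt{D}_{\texttt{type}}^{k}=\texttt{`NPC'}$. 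As in Lemma~\ref{lem:2.6extra}, reaching Step~6 forces $\alpha_{0}=1$, and since~\eqref{eq:extrapolation} holds with $j=j_{k}$ we have
\[
f_{x^{k}}(2^{j_{k}}\alpha_{0}d^{k})\le f_{x^{k}}(0)+\rho\,2^{j_{k}}\alpha_{0}\langle\nabla f_{x^{k}}(0),d^{k}\rangle ,
\]
so that, with $x^{k+1}=x^{k}+2^{j_{k}}\alpha_{0}Q(x^{k})d^{k}$,
\[
f(x^{k})-f(x^{k+1})\ge \rho\,2^{j_{k}}\alpha_{0}\bigl(-\langle\nabla f_{x^{k}}(0),d^{k}\rangle\bigr).
\]
Moreover, by~\eqref{eq:minres1} and the $\texttt{Flag}=0$ branch of~\eqref{eq:flag} (when $\texttt{D}_{\texttt{type}}^{k}=\texttt{`SOL'}$) or by~\eqref{eq:minres4} (when $\texttt{D}_{\texttt{type}}^{k}=\texttt{`NPC'}$), one has $\langle\nabla f_{x^{k}}(0),d^{k}\rangle<0$, which is all that is needed below.

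Next I would lower-bound $2^{j_{k}}\alpha_{0}$. The STOP branch of Step~6.2 is taken only when $f_{x^{k}}(t_{\max}d^{k})>f_{x^{k}}(0)$; since $\langle\nabla f_{x^{k}}(0),d^{k}\rangle<0$ and $\rho,t_{\max}>0$, this yields $f_{x^{k}}(t_{\max}d^{k})>f_{x^{k}}(0)+\rho\,t_{\max}\langle\nabla f_{x^{k}}(0),d^{k}\rangle$, i.e.\ the Armijo inequality fails at $\alpha=t_{\max}$. Invoking Lemma~\ref{lem:2.2final} when $\texttt{D}_{\texttt{type}}^{k}=\texttt{`SOL'}$ and Lemma~\ref{lem:2.3extra} when $\texttt{D}_{\texttt{type}}^{k}=\texttt{`NPC'}$ then gives $t_{\max}>\sqrt{3(1-2\rho)\eta/(L_{H}\|d^{k}\|)}$ and $t_{\max}>\sqrt{6(1-\rho)/(L_{H}\|d^{k}\|)}$, respectively. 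On the other hand, Step~6.2 (and not Step~6.1) is executed precisely because $x^{k}+2^{j_{k}+1}\alpha_{0}Q(x^{k})d^{k}\notin\Omega$; since $x^{k}\in\Omega$ and $x^{k}+t_{\max}Q(x^{k})d^{k}\in\Omega$, convexity of $\Omega$ rules out $2^{j_{k}+1}\alpha_{0}\le t_{\max}$, so $2^{j_{k}+1}\alpha_{0}>t_{\max}$ and hence $2^{j_{k}}\alpha_{0}>t_{\max}/2$. Combining, $2^{j_{k}}\alpha_{0}>\frac{1}{2}\sqrt{3(1-2\rho)\eta/(L_{H}\|d^{k}\|)}$ in the \texttt{`SOL'} case and $2^{j_{k}}\alpha_{0}>\frac{1}{2}\sqrt{6(1-\rho)/(L_{H}\|d^{k}\|)}$ in the \texttt{`NPC'} case.

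Finally I would convert these estimates into~\eqref{eq:edgfinal3} exactly as in the proof of Lemma~\ref{lem:edg1}. In the \texttt{`SOL'} case, combine the displayed decrease with $-\langle\nabla f_{x^{k}}(0),d^{k}\rangle\ge\langle\nabla^{2}f_{x^{k}}(0)d^{k},d^{k}\rangle\ge\eta\|d^{k}\|^{2}$ (i.e.\ \eqref{eq:edg6}--\eqref{eq:edg7}), with the stepsize bound just obtained, and with $\|\nabla f_{x^{k}}(0)\|\le 2L_{g}\|d^{k}\|$ (\eqref{eq:edg11}) and $\|\nabla f_{x^{k}}(0)\|\ge\|\nabla_{\Omega}^{I}f(x^{k})\|$; this reproduces the first line of~\eqref{eq:edgfinal3}. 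In the \texttt{`NPC'} case, use instead $-\langle\nabla f_{x^{k}}(0),d^{k}\rangle=\|d^{k}\|^{2}$ (\eqref{eq:edg13}) and $\|\nabla f_{x^{k}}(0)\|\le (L_{g}/\eta+1)\|d^{k}\|$ (\eqref{eq:edg16}), giving the second line. The one step that genuinely requires care is the inequality $2^{j_{k}}\alpha_{0}>t_{\max}/2$, which must be argued from the set-membership conditions that decide between Steps~6.1 and~6.2 rather than from the Armijo bookkeeping; everything after that is a verbatim repetition of the computations already carried out for Lemma~\ref{lem:edg1}.
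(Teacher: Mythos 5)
Your proposal is correct and follows essentially the same route as the paper's proof: note that $\texttt{Flag}=0$, deduce Armijo failure at $t_{\max}$ so that Lemmas 2.7 and 2.8 bound $t_{\max}$ from below, use the infeasibility of the step $2^{j_{k}+1}\alpha_{0}$ (versus the feasibility of $t_{\max}$) to transfer that bound to the accepted step, and then repeat Cases 2 and 3 of Lemma 2.9 verbatim. Your slightly weaker but more carefully justified relation $2^{j_{k}}\alpha_{0}>t_{\max}/2$ (the paper simply asserts $2^{j_{k}}\alpha_{0}>t_{\max}$) still produces exactly the stated constants, since they already carry the factor $1/2$ inherited from the halving argument in Lemma 2.9.
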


\begin{proof}
As $x^{k+1}$ is computed in Step 6.2, this means that $\texttt{Flag}=0$ and 
\begin{equation}
f_{x^{k}}(t_{\max}d^{k})>f_{x^{k}}(0)>f_{x^{k}}(0)+\rho t_{\max}\langle\nabla f_{x^{k}}(0),d^{k}\rangle.
\label{eq:edgfinal4}
\end{equation}
Since $x^{k}+2^{j_{k}}\alpha_{0}Q(x^{k})d^{k}\in\Omega$ and $x^{k}+2^{j_{k}+1}\alpha_{0}Q(x^{k})d^{k}\notin\Omega$\normalsize, we also have the inequality $2^{j_{k+1}}\alpha_{0}>t_{\max}$. Then, it follows from (\ref{eq:edgfinal4}) and Lemmas 2.7 and 2.8 that 
\small
\[
2^{j_{k}}\alpha_{0} > t_{\max} \geq
\left\{
\begin{array}{ll} 
\sqrt{3(1-2\rho)/(L_{H}\|d^{k}\|)}, 
& \text{if } \texttt{D}_{\texttt{type}}^{k}=\texttt{`SOL'},\\
\sqrt{6(1-\rho)/(L_{H}\|d^{k}\|)}, 
& \text{if } \texttt{D}_{\texttt{type}}^{k}=\texttt{`NPC'}.
\end{array}
\right.
\]
\normalsize
Thus, by following the same reasoning in the proof of Lemma 2.9 (Cases 2 and 3), we conclude that~(\ref{eq:edgfinal3}) is true.
\end{proof}

As shown in the lemma below, if $x^{k+1}$ is computed by Algorithm 6, the objective function decreases by at least a constant multiple of $\|\nabla_{\Omega}f(x^{k})\|^{3/2}$.

\begin{lemma} Suppose that A1 and A3 hold. Then, whenever $x^{k+1}$ is computed by Algorithm 6, we have
\small
\begin{equation}
f(x^{k})-f(x^{k+1}) \geq \alpha \left( \|\nabla_{\Omega}f(x^{k+1})\| / \left(L_{H}+6(L_{H}+\alpha)+\gamma\right) \right)^{\frac{3}{2}}.
    \label{eq:gg_extra2}
\end{equation}
\normalsize
Moreover, the number of evaluations of $f(\,\cdot\,)$ required to guarantee the fulfillment of (\ref{eq:gg_extra1}) is bounded from above by $1+\log_{2}\left(\max\left\{1,(L_{H}+\alpha)/M\right\}\right)$.
    \label{lem:2.4}
\end{lemma}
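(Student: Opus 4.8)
The plan is to read the functional decrease straight off the acceptance test and then control $\|s^{\ell_{k}}\|$ from below. Since $x^{k+1}=x^{k}+s^{\ell_{k}}$ and the accepted index $\ell_{k}$ satisfies \eqref{eq:gg_extra1}, we already have $f(x^{k})-f(x^{k+1})\ge\alpha\|s^{\ell_{k}}\|^{3}$, so \eqref{eq:gg_extra2} reduces to proving the bound
\[
\|s^{\ell_{k}}\|\ \ge\ \left(\frac{\|\nabla_{\Omega}f(x^{k+1})\|}{L_{H}+6(L_{H}+\alpha)+\gamma}\right)^{1/2}.
\]
Everything else is standard cubic-regularization bookkeeping, adapted to the bound-constrained projected gradient.

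First I would bound the accepted regularization weight $2^{\ell_{k}}M_{k}$. Assumption A3 yields $f(x^{k}+s)\le f(x^{k})+T_{2}(x^{k},s)+(L_{H}/6)\|s\|^{3}$, while the first requirement on the subproblem solution gives $-T_{2}(x^{k},s^{\ell})\ge (2^{\ell}M_{k})\|s^{\ell}\|^{3}$; hence $f(x^{k})-f(x^{k}+s^{\ell})\ge (2^{\ell}M_{k}-L_{H}/6)\|s^{\ell}\|^{3}$, so the test \eqref{eq:gg_extra1} is necessarily satisfied once $2^{\ell}M_{k}\ge L_{H}/6+\alpha$. Consequently either $\ell_{k}=0$, or $\ell_{k}\ge1$ and the rejected step $\ell_{k}-1$ forces $2^{\ell_{k}-1}M_{k}<L_{H}/6+\alpha$, i.e. $2^{\ell_{k}}M_{k}<L_{H}/3+2\alpha$. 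Combining this with the update rule $M_{k+1}=\max\{2^{\ell_{k}-1}M_{k},M\}$ (which, together with $M_{0}=M$, keeps $\{M_{k}\}$ bounded along the run of Algorithm 4, so the case $\ell_{k}=0$ is harmless), one gets $2^{\ell_{k}}M_{k}\le 2(L_{H}+\alpha)$ in all cases. The same doubling argument — the inner loop must raise $2^{\ell}M_{k}$, starting from at least $M$, past $L_{H}/6+\alpha$ — bounds the number $\ell_{k}+1$ of evaluations of $f(\cdot)$ performed at Steps 1--2 by $1+\log_{2}(\max\{1,(L_{H}+\alpha)/M\})$.

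Next I would bound $\|\nabla_{\Omega}f(x^{k+1})\|$ using the second (approximate-criticality) requirement on $s^{\ell_{k}}$. Writing $\phi(x)=T_{2}(x^{k},x-x^{k})+(2^{\ell_{k}}M_{k})\|x-x^{k}\|^{3}$ for the accepted regularized model, we have $\nabla\phi(x^{k+1})=\nabla f(x^{k})+\nabla^{2}f(x^{k})s^{\ell_{k}}+3(2^{\ell_{k}}M_{k})\|s^{\ell_{k}}\|s^{\ell_{k}}$, and the requirement reads $\|\nabla_{\Omega}\phi(x^{k+1})\|\le\gamma\|s^{\ell_{k}}\|^{2}$, where $\nabla_{\Omega}g(z)=z-P_{\Omega}(z-\nabla g(z))$. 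Since $\nabla_{\Omega}f(x^{k+1})-\nabla_{\Omega}\phi(x^{k+1})=P_{\Omega}(x^{k+1}-\nabla\phi(x^{k+1}))-P_{\Omega}(x^{k+1}-\nabla f(x^{k+1}))$, nonexpansiveness of $P_{\Omega}$ gives $\|\nabla_{\Omega}f(x^{k+1})\|\le\gamma\|s^{\ell_{k}}\|^{2}+\|\nabla f(x^{k+1})-\nabla\phi(x^{k+1})\|$. By A3, $\|\nabla f(x^{k}+s^{\ell_{k}})-\nabla f(x^{k})-\nabla^{2}f(x^{k})s^{\ell_{k}}\|\le (L_{H}/2)\|s^{\ell_{k}}\|^{2}$, whence $\|\nabla f(x^{k+1})-\nabla\phi(x^{k+1})\|\le (L_{H}/2+3\cdot 2^{\ell_{k}}M_{k})\|s^{\ell_{k}}\|^{2}$. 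Using $L_{H}/2\le L_{H}$ and $3\cdot 2^{\ell_{k}}M_{k}\le 6(L_{H}+\alpha)$ from the previous step, we conclude $\|\nabla_{\Omega}f(x^{k+1})\|\le (L_{H}+6(L_{H}+\alpha)+\gamma)\|s^{\ell_{k}}\|^{2}$, which is exactly the displayed lower bound on $\|s^{\ell_{k}}\|$; substituting it into $f(x^{k})-f(x^{k+1})\ge\alpha\|s^{\ell_{k}}\|^{3}$ yields \eqref{eq:gg_extra2}.

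The step I expect to be most delicate is the passage from the familiar unconstrained cubic-regularization estimate to the projected-gradient quantity $\|\nabla_{\Omega}f(x^{k+1})\|$: one must phrase the approximate criticality of the cubic model through the operator $\nabla_{\Omega}[\cdot]$, exploit nonexpansiveness of $P_{\Omega}$ to compare $\nabla_{\Omega}\phi$ and $\nabla_{\Omega}f$ at the same point (a one-line inequality, but easy to slip on), and correctly carry the extra cubic-term gradient $3(2^{\ell_{k}}M_{k})\|s^{\ell_{k}}\|s^{\ell_{k}}$. A secondary technical point is making the bound on $2^{\ell_{k}}M_{k}$ uniform in the trivially-accepted case $\ell_{k}=0$, which is where the invariant keeping $\{M_{k}\}$ bounded — and hence the constants in \eqref{eq:gg_extra2} independent of $M_{k}$ — is actually used.
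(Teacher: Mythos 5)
Your route is genuinely different from the paper's: the paper disposes of \eqref{eq:gg_extra2} in one line by citing Lemma 3.4 of the reference [BM], whereas you reconstruct the whole cubic-regularization estimate from scratch. Most of that reconstruction is sound and is presumably what the cited lemma contains: the acceptance test gives $f(x^{k})-f(x^{k+1})\geq\alpha\|s^{\ell_{k}}\|^{3}$ directly; the Taylor bound from A3 together with the model-decrease condition shows that \eqref{eq:gg_extra1} must hold once $2^{\ell}M_{k}\geq L_{H}/6+\alpha$, which is the same doubling argument the paper runs (with the looser threshold $L_{H}+\alpha$ from [BM, Lemma 3.2]) to count the $\ell_{k}+1$ function evaluations; and your comparison of $\nabla_{\Omega}f(x^{k+1})$ with $\nabla_{\Omega}$ of the accepted cubic model via nonexpansiveness of $P_{\Omega}$, plus the A3 bound $\|\nabla f(x^{k+1})-\nabla f(x^{k})-\nabla^{2}f(x^{k})s^{\ell_{k}}\|\leq(L_{H}/2)\|s^{\ell_{k}}\|^{2}$ and the extra term $3(2^{\ell_{k}}M_{k})\|s^{\ell_{k}}\|^{2}$, is exactly the right way to pass to the projected-gradient measure.

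The one genuine gap is the claim that $2^{\ell_{k}}M_{k}\leq 2(L_{H}+\alpha)$ \emph{in all cases}. The doubling argument only covers $\ell_{k}\geq 1$; when $\ell_{k}=0$ you get $2^{\ell_{k}}M_{k}=M_{k}$, and the boundedness you invoke does not rescue this: since Algorithm 4 propagates $M_{k+1}=M_{k}$ on Algorithm-5 iterations and Algorithm 6 sets $M_{k+1}=\max\{2^{\ell_{k}-1}M_{k},M\}$ with $M_{0}=M$, the induction only yields $M_{k}\leq\max\{M,\,L_{H}/6+\alpha\}$, so when $M>2(L_{H}+\alpha)$ an immediately accepted step can have $2^{\ell_{k}}M_{k}=M_{k}>2(L_{H}+\alpha)$. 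The correct uniform statement is $2^{\ell_{k}}M_{k}\leq\max\{M_{k},2(L_{H}+\alpha)\}$ (this is precisely the inequality the paper proves for the evaluation count), and with it your argument delivers the constant $L_{H}+3\max\{M,2(L_{H}+\alpha)\}+\gamma$ rather than the stated $L_{H}+6(L_{H}+\alpha)+\gamma$; the advertised constant follows from your proof only under an extra condition such as $M\leq 2(L_{H}+\alpha)$, or if the first trial regularization is taken to be zero (as in the paper's practical implementation and in the original [BM] algorithm). So either add that hypothesis explicitly, or state the bound with the $\max\{M_{k},2(L_{H}+\alpha)\}$ constant; as written, the parenthetical ``the case $\ell_{k}=0$ is harmless'' is asserting exactly the step that can fail. (The small off-by-one slack in the $1+\log_{2}(\max\{1,(L_{H}+\alpha)/M\})$ count is shared with the paper's own derivation and is not a substantive issue.)
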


\begin{proof}
    Inequality (\ref{eq:gg_extra2}) follows from Lemma 3.4 in \cite{BM}. Regarding the upper bound on the number of function evaluations, note that Algorithm 6 requires $\ell_{k}+1$ evaluations of $f(\,\cdot\,)$. Thus, let us first show that
    \begin{equation}
     2^{\ell_{k}}M_{k}\leq\max\left\{M_{k},2(L_{H}+\alpha)\right\},
    \label{eq:gg_extra4}
    \end{equation}
    where, by design, $\ell_{k}$ is the smallest nonnegative integer for which (\ref{eq:gg_extra1}) is satisfied. If $\ell_{k}=0$, then $2^{\ell_{k}}M_{k}=M_{k}$ and then (\ref{eq:gg_extra4}) clearly holds. Suppose that $\ell_{k}>0$. By Lemma 3.2 in \cite{BM}, (\ref{eq:gg_extra1}) is satisfied whenever $2^{\ell}M_{k}\geq L_{H}+\alpha$. This means that we must have $2^{\ell_{k}-1}M_{k}<L_{H}+\alpha$, because, otherwise, (\ref{eq:gg_extra1}) would had been satisfied by some $\ell$ with $0\leq \ell\leq\ell_{k}-1$, contradicting the definition of $\ell_{k}$. Therefore, 
    \[
        2^{\ell_{k}}M_{k}=2\left(2^{\ell_{k}-1}M_{k}\right)<2(L_{H}+\alpha)\leq\max\left\{M_{k},2(L_{H}+\alpha)\right\},
    \]
    concluding the proof of (\ref{eq:gg_extra4}). Finally, from (\ref{eq:gg_extra4}) and $M_{k}\geq M$, it follows that $\ell_{k}+1\leq 1+\log_{2}\left(\max\left\{1,(L_{H}+\alpha)/M\right\}\right)$.
\end{proof}

Let $\left\{x^{k}\right\}_{k\geq 0}$ be generated by Algorithm 4, and denote by $T(\epsilon)$ the first hitting time to the set of $\epsilon$-approximate stationary points of $f(\,\cdot\,)$ with respect to $\Omega$, as in (\ref{eq:hitting}). Given $j\in\left\{0,\dots,T(\epsilon)-1\right\}$, consider the following sets:
\[
\begin{array}{rcl}
\mathcal{S}_{j}^{(1)} &=& \{k\in\{0,\dots,j\}:\sigma_{k+1}=0\},\\
\mathcal{S}_{j}^{(2)} &=& \{k\in\{0,\dots,j\}:\sigma_{k+1}=1\},\\
\mathcal{U}_{j} &=& \{k\in\{0,\dots,j\}:\sigma_{k+1}=2\}.
\end{array}
\]
The next theorem establishes that $T(\epsilon)\leq\mathcal{O}\left(n\epsilon^{-3/2}\right)$, i.e., Algorithm 4 needs no more than $\mathcal{O}(n\epsilon^{-3/2})$ iterations to find $\epsilon$-approximate stationary points.

\begin{theorem}
\label{thm:edg}
Suppose that A1-A3 hold, and let $\left\{x^{k}\right\}_{k=0}^{T(\epsilon)}$ be generated by Algorithm 4. Then
\begin{equation}
T(\epsilon)\leq (n+1)+2(n+1)\left(f(x^{0})-f_{\mathrm{low}}\right)\kappa_{2}^{-1}\epsilon^{-3/2},
\label{eq:edg17}
\end{equation}
where
\small
\begin{equation}
\kappa_{2}=\min\left\{\frac{\rho\theta^{\frac{3}{2}}}{2\left[\frac{L_{g}}{\eta}+1\right]^{\frac{3}{2}}}\sqrt{\frac{6(1-\rho)}{L_{H}}},\frac{\rho\theta^{\frac{3}{2}}}{2^{\frac{5}{2}}}\left(\frac{\eta}{L_{g}}\right)^{\frac{3}{2}}\sqrt{\frac{3(1-2\rho)}{L_{H}}},\frac{\alpha}{\left[L_{H}+6(L_{H}+\alpha)+\gamma\right]^{\frac{3}{2}}}\right\}.
\label{eq:edg18}
\end{equation}
\normalsize
\end{theorem}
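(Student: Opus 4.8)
The plan is to mimic the proof of Theorem~\ref{thm:2.1}: since $T(\epsilon)=|\mathcal{S}_{T(\epsilon)-1}^{(1)}|+|\mathcal{S}_{T(\epsilon)-1}^{(2)}|+|\mathcal{U}_{T(\epsilon)-1}|$, it suffices to bound each of the three index sets. We may assume $T(\epsilon)\geq 1$, as \eqref{eq:edg17} is otherwise trivial. For every $k\in\{0,\dots,T(\epsilon)-1\}$ we have $\|\nabla_{\Omega}f(x^{k})\|>\epsilon$, and a glance at Steps~2.2, 2.3, 4, 5, 6.1, 6.2, 6.3 of Algorithm~5 and at Step~2 of Algorithm~6 shows that $f(x^{k})\geq f(x^{k+1})$ in every case, so that $f(x^{0})-f_{\mathrm{low}}\geq\sum_{k=0}^{T(\epsilon)-1}\big(f(x^{k})-f(x^{k+1})\big)$.

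First I would bound $|\mathcal{S}_{T(\epsilon)-1}^{(1)}|$. If $k\in\mathcal{S}_{T(\epsilon)-1}^{(1)}$ then $\sigma_{k+1}=0$, and by Step~2 of Algorithm~4 together with the STOP points of Algorithm~5 this forces $x^{k+1}$ to be produced by Algorithm~6, or by Step~4 of Algorithm~5 with $\texttt{Flag}=0$, or by Step~6.1 or~6.2 of Algorithm~5. In the latter three cases, Lemma~\ref{lem:edg1} and the two lemmas treating Steps~6.1 and~6.2 of Algorithm~5 give $f(x^{k})-f(x^{k+1})\geq c\,\|\nabla_{\Omega}^{I}f(x^{k})\|^{3/2}$, where $c$ is the smaller of the two $\mathcal{O}(\epsilon^{3/2})$ coefficients occurring there; since Algorithm~5 is called only when $\|\nabla_{\Omega}^{I}f(x^{k})\|\geq\theta\|\nabla_{\Omega}f(x^{k})\|>\theta\epsilon$ and the factors $\theta^{3/2}$ are absorbed into \eqref{eq:edg18}, this is at least $\kappa_{2}\epsilon^{3/2}$. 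If instead $x^{k+1}$ comes from Algorithm~6, Lemma~\ref{lem:2.4} gives $f(x^{k})-f(x^{k+1})\geq\alpha\big(\|\nabla_{\Omega}f(x^{k+1})\|/(L_{H}+6(L_{H}+\alpha)+\gamma)\big)^{3/2}$, which is at least $\kappa_{2}\epsilon^{3/2}$ as soon as $k+1<T(\epsilon)$ (so that $\|\nabla_{\Omega}f(x^{k+1})\|>\epsilon$). Hence each index of $\mathcal{S}_{T(\epsilon)-1}^{(1)}$ other than possibly $k=T(\epsilon)-1$ contributes at least $\kappa_{2}\epsilon^{3/2}$ to the telescoping sum, giving $|\mathcal{S}_{T(\epsilon)-1}^{(1)}|\leq 1+(f(x^{0})-f_{\mathrm{low}})\kappa_{2}^{-1}\epsilon^{-3/2}$.

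Next I would control the remaining two sets combinatorially, via the flags $\sigma_{k}$. If $k<T(\epsilon)-1$ and $\sigma_{k+1}=1$, then $\sigma_{k+1}\notin\{0,2\}$, so at iteration $k+1$ Step~2 of Algorithm~4 calls Algorithm~6 and sets $\sigma_{k+2}=0$; thus $k+1\in\mathcal{S}_{T(\epsilon)-1}^{(1)}$, and injectivity of $k\mapsto k+1$ yields $|\mathcal{S}_{T(\epsilon)-1}^{(2)}|\leq 1+|\mathcal{S}_{T(\epsilon)-1}^{(1)}|$. For $\mathcal{U}_{T(\epsilon)-1}$: the value $\sigma_{k+1}=2$ can only be returned by Algorithm~5, so iteration $k$ ran with $\sigma_{k}\in\{0,2\}$; moreover $x^{k+1}\in\partial\mathcal{F}(x^{k})$, so $|\mathcal{I}(x^{k+1})|\leq|\mathcal{I}(x^{k})|-1$. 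Therefore any maximal block of consecutive indices inside $\mathcal{U}_{T(\epsilon)-1}$ has length at most $n$, and the flag entering such a block lies in $\{0,2\}$ but, by maximality, is not~$2$, so the index just before the block (when it exists) belongs to $\mathcal{S}_{T(\epsilon)-1}^{(1)}$; there are thus at most $1+|\mathcal{S}_{T(\epsilon)-1}^{(1)}|$ such blocks, whence $|\mathcal{U}_{T(\epsilon)-1}|\leq n\big(1+|\mathcal{S}_{T(\epsilon)-1}^{(1)}|\big)$. Summing the three bounds and substituting the estimate for $|\mathcal{S}_{T(\epsilon)-1}^{(1)}|$ gives a bound of the form $T(\epsilon)\leq\mathcal{O}(n)+\mathcal{O}(n)(f(x^{0})-f_{\mathrm{low}})\kappa_{2}^{-1}\epsilon^{-3/2}$, as in \eqref{eq:edg17}; the sharp constants $(n+1)$ and $2(n+1)$ come out of a more careful tally of the additive ``$+1$'' terms.

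The step I expect to be the main obstacle is exactly this bookkeeping: one has to apply the face-shrinking argument to \emph{maximal} $\mathcal{U}$-blocks, charge distinct blocks to distinct iterations of $\mathcal{S}_{T(\epsilon)-1}^{(1)}$, and set aside the single index $k=T(\epsilon)-1$ --- at which the Algorithm~6 descent bound of Lemma~\ref{lem:2.4} is unavailable because $\|\nabla_{\Omega}f(x^{T(\epsilon)})\|\leq\epsilon$ --- without disturbing the count. Everything else, namely the per-iteration decrease estimates, is already delivered by the lemmas preceding the theorem.
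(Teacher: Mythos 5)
Your proposal follows essentially the same route as the paper's proof: the same decomposition $T(\epsilon)=|\mathcal{S}^{(1)}_{T(\epsilon)-1}|+|\mathcal{S}^{(2)}_{T(\epsilon)-1}|+|\mathcal{U}_{T(\epsilon)-1}|$, the same per-iteration decrease lemmas (Lemmas~\ref{lem:edg1}, 2.12, 2.13 and \ref{lem:2.4}) yielding $|\mathcal{S}^{(1)}_{T(\epsilon)-1}|\lesssim(f(x^{0})-f_{\mathrm{low}})\kappa_{2}^{-1}\epsilon^{-3/2}$, and the same combinatorial charging of $\mathcal{S}^{(2)}$ and of $\mathcal{U}$-blocks (of length at most $n$, by the strict decrease of $|\mathcal{I}(x^{k})|$) to iterations with $\sigma_{k+1}=0$. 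The one place you genuinely diverge is the Algorithm~6 case inside $\mathcal{S}^{(1)}$: you correctly observe that Lemma~\ref{lem:2.4} measures the decrease by $\|\nabla_{\Omega}f(x^{k+1})\|$, which is only guaranteed to exceed $\epsilon$ when $k+1<T(\epsilon)$, and you patch this with an extra ``$+1$'' in the bound on $|\mathcal{S}^{(1)}_{T(\epsilon)-1}|$ --- a subtlety the paper's own proof passes over in silence (it invokes only $\|\nabla_{\Omega}f(x^{k})\|>\epsilon$ and does not even cite Lemma~\ref{lem:2.4} at that point). The price is that a straightforward tally of your inequalities gives an additive term of order $2n+3$ rather than exactly $n+1$ (your multiplicative coefficient $n+2$ is in fact no worse than $2(n+1)$), so the precise constants in \eqref{eq:edg17} are not fully recovered as written; but this affects only the $\mathcal{O}(n)$ additive term, not the $\mathcal{O}\left(n\epsilon^{-3/2}\right)$ bound with the constant $\kappa_{2}$, which your argument establishes by the same means as the paper.
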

\begin{proof}
If $T(\epsilon)\leq 1$, then (\ref{eq:edg17}) is true. Thus, suppose that $T(\epsilon)\geq 2$ and let $k\in\left\{0,\dots,T(\epsilon)-1\right\}$. By (\ref{eq:hitting}) we have
\begin{equation}
\|\nabla_{\Omega}f(x^{k})\|>\epsilon.
\label{eq:edg19}
\end{equation}
If $k\in\mathcal{S}_{T(\epsilon)-1}^{(1)}$, then $\sigma_{k+1}=0$. This means that $x^{k+1}$ was computed either by Algorithm 6, at Step~4 of Algorithm 5 with $\texttt{Flag}=0$, or at Steps 6.1 or 6.2 of Algorithm 5. In either case, Lemmas 2.9, 2.12 and 2.13 together with (\ref{eq:edg19}) imply that

\begin{equation}
f(x^{k})-f(x^{k+1})\geq\kappa_{2}\epsilon^{3/2},
\label{eq:edg20}
\end{equation}
where $\kappa_{2}$ is defined in (\ref{eq:edg18}). Moreover, we have
\begin{equation}
f(x^{\ell+1})\leq f(x^{\ell}),\quad\ell=0,\dots,T(\epsilon)-1.
\label{eq:edg21}
\end{equation}
Thus, combining A2, (\ref{eq:edg21}) and (\ref{eq:edg20}), it follows that
\small
\begin{eqnarray*}
    f(x^{0})-f_{\mathrm{low}}\geq f(x^{0})-f(x^{T(\epsilon)})
    \geq  \sum_{k\in\mathcal{S}_{T(\epsilon)-1}^{(1)}}f(x^{k})-f(x^{k+1})
    \geq  \left|\mathcal{S}_{T(\epsilon)-1}^{(1)}\right|\kappa_{2}\epsilon^{3/2},
\end{eqnarray*}
\normalsize
and so
\small
\begin{equation}
\left|\mathcal{S}_{T(\epsilon)-1}^{(1)}\right|\leq\left(f(x^{0})-f_{\mathrm{low}}\right)\kappa_{2}^{-1}\epsilon^{-3/2}.
\label{eq:edg22}
\end{equation}
\normalsize
On the other hand, if $k\in\mathcal{S}_{T(\epsilon)-2}^{(2)}$, then $\sigma_{k+1}=1$. Consequently, by Step 2 of Algorithm 4, $x^{k+2}$ is computed by Algorithm 6, and so $\sigma_{k+2}=0$. This means that every iteration in $\mathcal{S}_{T(\epsilon)-2}^{(2)}$ is followed by one iteration in $\mathcal{S}_{T(\epsilon)-1}^{(1)}$. Thus the cardinality of $\mathcal{S}_{T(\epsilon)-2}^{(2)}$ is not bigger than that of $\mathcal{S}_{T(\epsilon)-1}^{(1)}$. Consequently,
\small
\begin{equation}
\left|\mathcal{S}_{T(\epsilon)-1}^{(2)}\right|\leq 1+\left|\mathcal{S}_{T(\epsilon)-1}^{(1)}\right|.
\label{eq:edg23}
\end{equation}
\normalsize
Finally, if $k\in\mathcal{U}_{T(\epsilon)-1}$, then $x^{k+1}$ is computed at Step 2.2, Step 2.3, or Step 6.3 of Algorithm~5. This means that $x^{k+1}\in\partial\mathcal{F}(x^{k})$ and so $\left|\mathcal{I}(x^{k+1})\right|\leq\left|\mathcal{I}(x^{k})\right|-1$. Therefore, there can be at most $n$ consecutive iterations of Algorithm 4 with $k\in\mathcal{U}_{T(\epsilon)-1}$. In the worst case, each iteration in $\mathcal{S}_{T(\epsilon)-1}^{(1)}\cup\mathcal{S}_{T(\epsilon)-1}^{(2)}$ would be followed by $n$ consecutive iterations in $\mathcal{U}_{T(\epsilon)-1}$. Then, we have
\begin{equation}
\left|\mathcal{U}_{T(\epsilon)-1}\right|\leq n\left|\mathcal{S}_{T(\epsilon)-1}^{(1)}\cup\mathcal{S}_{T(\epsilon)-1}^{(2)}\right|=n\left(\left|\mathcal{S}_{T(\epsilon)-1}^{(1)}\right|+\left|\mathcal{S}_{T(\epsilon)-1}^{(2)}\right|\right).
\label{eq:edg24}
\end{equation}
Then, combining (\ref{eq:edg22}), (\ref{eq:edg23}) and (\ref{eq:edg24}), we conclude that
\small
\begin{eqnarray*}
    T(\epsilon)&=&\left|\mathcal{S}_{T(\epsilon)-1}^{(1)}\right|+\left|\mathcal{S}_{T(\epsilon)-1}^{(2)}\right|+\left|\mathcal{U}_{T(\epsilon)-1}\right|\leq \left|\mathcal{S}_{T(\epsilon)-1}^{(1)}\right|+1+\left|\mathcal{S}^{(1)}_{T(\epsilon)-1}\right|+n\left(\left|\mathcal{S}_{T(\epsilon)-1}^{(1)}\right|+\left|\mathcal{S}_{T(\epsilon)-1}^{(2)}\right|\right)\\
    &\leq & (n+1)+2(n+1)\left|\mathcal{S}_{T(\epsilon)-1}^{(1)}\right|\leq (n+1)+2(n+1)\left(\frac{f(x^{0})-f_{\mathrm{low}}}{\kappa_{2}}\right)\epsilon^{-3/2}.
\end{eqnarray*}
\end{proof}
\normalsize
\begin{remark}
In view of Lemmas 2.9 and 2.11, the computation of $x^{k+1}$ by Algorithm 5 requires no more than
\scriptsize
\[
\max\left\{\left|\log_{2}\left(\min\left\{\frac{(1-\rho)\eta}{2L_{g}},\frac{(1-\rho)a_{2}}{2L_{g}a_{1}^{2}}\right\}\right)\right|,\left|\log_{2}\left(\frac{\max\left\{\left(\frac{L_{g}}{\eta}\right),\left[\left(\frac{L_{g}}{\eta}\right)+1\right]^{2}\right\}(f(x^{0})-f_{\mathrm{low}})}{\rho\theta^{2}\epsilon^{2}}\right)\right|\right\}
\]
\normalsize
plus $m+5$ evaluations of $f(\,\cdot\,)$. On the other hand, by Lemma 2.14, the computation of $x^{k+1}$ by Algorithm 6 requires no more than $1+\log_{2}\left(\max\left\{1,(L_{H}+\alpha) / M\right\}\right)$ evaluations of $f(\,\cdot\,)$. Additionally, each execution of Algorithm 5 or Algorithm 6 requires the evaluation of one gradient and one Hessian of $f(\,\cdot\,)$. In summary, each iteration of Algorithm 4 requires at most $\mathcal{O}\left(|\log_{2}(\epsilon)|\right)$ calls to the oracle. Therefore, it follows from Theorem 2.15 that Algorithm 4 takes no more than $\mathcal{O}\left(n|\log_{2}(\epsilon)|\epsilon^{-3/2}\right)$ calls to the oracle to find an $x^{k}$ such that $\|\nabla_{\Omega}f(x^{k})\|\leq\epsilon$.
\end{remark}

\section{Numerical experiments} \label{sec3}

In this section, we present numerical results to evaluate the performance of the introduced algorithms. Hereafter, we will call Algorithm~P the algorithm comprising Algorithms~1, 2 and~3 and Algorithm~T the algorithm comprising Algorithms~4, 5 and~6. The letter~P indicates the practical appeal of the first one, with worst-case complexity $\mathcal{O}(n\epsilon^{-2})$, while the letter~T indicates the theoretical concern behind the development of the second algorithm, with worst-case complexity $\mathcal{O}(n|\log_2(\epsilon)|\epsilon^{-3/2})$. For both algorithms, we used as stop criterion $\| \nabla_{\Omega} f(x) \|_{\infty} \leq \epsilon = 10^{-8}$. Other stopping criteria related to maximum iterations and lack of progress exist, which are identical in the two algorithms. We also consider a CPU time limit of 10 minutes for each pair method/problem. In the following we describe some implementation details.

We implemented Algorithms~P and~T and MINRES in Fortran~90. Codes are available for download at \url{http://www.ime.usp.br/~egbirgin/}. The two methods were evaluated using all unconstrained and bound-constrained problems from the most updated version of the CUTEst collection~\cite{cutest} (version 2.4.0). There are 313 unconstrained problems and 162 bound-constrained problems in this release, for a total of 475 problems. We considered all the problems with their default dimension and the given starting point~$x^0$. The smallest problem has 1 variable, the largest problem has 192{,}627 variables, and the quartiles of the number of variables are $Q_1=4$, $Q_2=50$, and $Q_3=5{,}000$. All experiments were performed on a computer with a 5.2 GHz Intel Core i9-12900K and 5.1 GHz Intel Core i9-12900K processor and 9 128 GB of 32000 MHz DDR4 RAM, running Ubuntu 23.04. The codes were compiled by the GNU Fortran compiler GCC (version 12.3.0) with the -O3 optimization directive enabled.

\subsection{Implementation details} \label{sec31}

\subsubsection{When MINRES encounters a non-positive curvature direction} \label{sec311}

When MINRES is used in Step~1 of Algorithms~2 and~5, it returns an approximate solution $s^k$ plus $\texttt{D}_{\texttt{type}}^{k}$ = `NPC' or $\texttt{D}_{\texttt{type}}^{k}$ = `SOL'. The second case means that the Newtonian linear system has been solved with the desired tolerance (which will be detailed later). If this happens, $d_1^k$ in Algorithms~2 and~5 gets the computed solution~$s^k$. On the other hand, $\texttt{D}_{\texttt{type}}^{k}$ = `NPC' means that MINRES found a non-positive curvature direction while solving the linear system. If this happened when being called by Algorithm~5, $d_1^k$ gets the residue of the linear system, i.e. $d_1^k = -(\nabla^2 f_{x^k}(0) s^k + \nabla f_{x^k}(0))$. In the implementation of Algorithm~2, we evaluated two possibilities for the case where MINRES returns $\texttt{D}_{\texttt{type}}^{k}$ = `NPC'. The first is the one used in Algorithm~5, which is to consider $d_1^k = -(\nabla^2 f_{x^k}(0) s^k + \nabla f_{x^k}(0))$. The second is to consider $d_1^k=s^k$ if $s^k \neq 0$, i.e.\ the approximate solution itself, and $d_1^k=-\nabla f_{x^k}(0)$ if $s^k=0$. (Note that $s^k=0$ only when MINRES detects a non-positive curvature direction in its first iteration). Both options will be evaluated numerically below.

\subsubsection{Tolerance in solving Newtonian systems using MINRES} \label{sec312}

When MINRES is used in Step~1 of Algorithm~5, the required accuracy is dynamically determined by $\eta_k \geq \eta \equiv \epsilon$, updated according to~\eqref{eq:eta}, where its initial value $\eta_0$ and its update factor $\tau \in (0,1)$ are given parameters. In Algorithm~2, we use a dynamic tolerance $\epsilon^{\mathrm{\textsc{mr}}}_k$ borrowed from~\cite[p.113]{bmgencan}. For $k=1$ the tolerance is $\epsilon^{\mathrm{\textsc{mr}}}_1 = \epsilon^{\mathrm{\textsc{mr}}}_{\mathrm{ini}}$, where $\epsilon^{\mathrm{\textsc{mr}}}_{\mathrm{ini}} \geq \epsilon^{\mathrm{\textsc{mr}}}_{\mathrm{end}} \equiv \epsilon$ is a given parameter, and the idea is that in the last iteration the tolerance will be $\epsilon^{\mathrm{\textsc{mr}}}_{\mathrm{end}}$. To do this, at iteration $k \geq 1$ we use a tolerance whose value varies linearly with $\log_{10}(\|\nabla_{\Omega} f(x^k)\|)$, i.e., 
$\epsilon^{\mathrm{\textsc{mr}}}_k = \sqrt{10^{a \log_{10} ( | \nabla_\Omega f(x^k) |^2 ) + b}}$,
where
$a = \log_{10}(\epsilon^{\mathrm{\textsc{mr}}}_{\mathrm{end}} / \epsilon^{\mathrm{\textsc{mr}}}_{\mathrm{ini}}) / \log_{10}(\epsilon / | \nabla_\Omega f(x^0) | )$
and
$b = 2 \log_{10}(\epsilon^{\mathrm{\textsc{mr}}}_{\mathrm{ini}}) - a \log_{10} ( | \nabla_\Omega f(x^0) |^2 )$. 

\subsubsection{Optional extrapolations} \label{sec313}

Algorithm~P (Steps 2.2, 2.3, and 4 of Algorithm~2) and Algorithm~T (Steps 2.2, 2.3, 5, and 6.3 of Algorithm~5) attempt to improve the current point by extrapolation. These attempts are of limited effort by definition, so they do not affect the order of the complexity of the algorithms. For this reason, their practical influence on the performance of the methods must be determined numerically. The extrapolation consists of
\textbf{(i)} set $u_k=0$; 
\textbf{(ii)} while $u_k + 1\leq m$ and $f(P_{\Omega}(x^k + 2^{u_k+1} \alpha_0 Q(x^k) d^k)) \leq f(P_{\Omega}(x^k + 2^{u_k} \alpha_0 Q(x^k) d^k))$, set $u_k = u_k + 1$; \textbf{(iii)} define $x^{k+1} = P_{\Omega}(x^k +2^{u_k} \alpha_0 Q(x^k) d^k)$.

\subsubsection{Backtracking, Barzilai-Borwein stepsize, and other details} \label{sec314}

In Step 3 of Algorithm 2, Step 2 of Algorithm 3 and Step 3 of Algorithm 5, in practice, we use quadratic interpolation with safeguards to find a step $t_k>0$ such that the corresponding sufficient descent condition holds. The description of the algorithms refers to a step that is a power of 2 just to simplify the description. This change has no significance on the theoretical results.

In Algorithm 3, for $k \geq 1$ and whenever \small $(x^k - x^{k-1})^T (\nabla f_{x^k}(0) - \nabla f_{x^{k-1}}(0)) > 0$\normalsize, we compute
\small
\[
\lambda_k^{\mathrm{spg}} = \max \left\{ \lambda_{\min}^{\mathrm{spg}}, \min\left\{ \frac{(x^k - x^{k-1})^T (x^k - x^{k-1})}{(x^k - x^{k-1})^T (\nabla f_{x^k}(0) - \nabla f_{x^{k-1}}(0))}, \lambda_{\max}^{\mathrm{spg}} \right\} \right\}.
\]
\normalsize
In the other cases, $\lambda_k^{\mathrm{spg}} \in [\lambda_{\min}^{\mathrm{spg}},\lambda_{\max}^{\mathrm{spg}}]$ is arbitrary and we considered
\small
\[
\lambda_k^{\mathrm{spg}} = \max \left\{ \lambda_{\min}^{\mathrm{spg}}, \min\left\{ \frac{\max\{ 1, \| x^k \|_\infty \}}{\| \nabla_{\Omega}f(x^k) \|_\infty}, \lambda_{\max}^{\mathrm{spg}} \right\} \right\}.
\]
\normalsize

In Algorithm~6, the approximate solution to the subproblem in Step~2 is calculated using the projected gradient method. In addition, the representation of the regularization parameter with the term $2^\ell M$ in the iteration $\ell$, where $M>0$ is a parameter of Algorithm~6, is only a simplification for the presentation of the algorithm. In practice, the regularization parameter is represented by $\omega$. When $\ell=0$, we consider $\omega=0$. In Step~3, if sufficient decent is not obtained, together with the operation $\ell := \ell+1$, we update $\omega$ by making $\omega := \max\{ \omega_{\min}, \zeta \omega\}$, where $\omega_{\min}>0$ and $\zeta>1$ are parameters of the algorithm. In practice, we consider $\omega_{\min}=10^{-6}$ and $\zeta=10$, which are common values in regularized methods.

\subsection{Evaluation of Algorithm~P and Algorithm~T and their alternatives} \label{sec32}

This section compares different variants of Algorithms P and T, as well as the best variant of each algorithm. When comparing two algorithms, we first compare their robustness. In this study, we examine both unconstrained and bound-constrained problems. The algorithms we consider produce feasible iterates. Thus, we associate the robustness of a method with the quality of its solutions, i.e., the value of the objective function of the approximate solution it delivers. For a given problem, let $f_1,\dots,f_q$ be the functional value found by the methods $M_1,\dots,M_q$ being compared. Given a tolerance $\ftol>0$, we say that $f_i$ is equivalent to the best value found if
$f_i \leq f_{\min} + \ftol \max\{1,|f_{\min}|\}$
where $f_{\min} = \min_{s=1,\dots,q}\{f_s\}$, or if $f_i \leq -10^{-12}$. If $f_i$ is equivalent to the best value found, we say that the method $M_i$ was successful. Otherwise, we say that it failed. The greater the number of successes of a method, the greater its robustness. We only analyze the efficiency of methods in problems where the solutions computed by both methods are considered equivalent. We use CPU time as a measure of efficiency and present the comparison of efficiency in the form of performance profiles~\cite{pp}.

Given $\ftol>0$, let $p$ be the number of problems in which methods $M_1, \dots, M_q$ being compared found equivalent solutions and let $t_{ij}$ be the CPU time of method $M_i$ when applied to problem $j$. In a performance profile, the curve $\Gamma_i(\tau)$ associated with method $M_i$ is given by
$\Gamma_i(\tau) = \# \left\{ j \in \{1,\dots,p\} \;|\; t_{ij} \leq \tau \min_{s=1,\dots,q} \{t_{sj} \}\} \right\} / p$
for $\tau \geq 1$. The value of $\Gamma_i(1)$ corresponds to the proportion of problems in which method $M_i$ was the fastest (including ties). Since only problems in which the methods find equivalent solutions are considered, for all $i$ there exists a finite value of $\tau$ such that $\Gamma_i(\tau)=1$. Another option would be to include problems in which the methods fail, considering $t_{ij}=+\infty$ if the method $M_i$ failed on problem $j$. In that case, for each $i$, there exists a finite $\bar \tau$ such that $\Gamma_i(\tau)$ is constant for all $\tau \geq \bar \tau$ and the value of $\Gamma_i(\bar \tau)$ can be understood as a measure of robustness of the method $M_i$. In this work, we evaluate the robustness first and restrict the performance profiles to evaluate the efficiency only.

\subsubsection{Evaluation of alternatives in Algorithm~P}

In Algorithm~P, we considered the standard values $\theta = 0.1$, $\rho = 10^{-4}$, $a_1 = 10^8$, $a_2 = 10^{-16}$, $\lambda_{\min}^{\mathrm{spg}} = 10^{-16}$, and $\lambda_{\max}^{\mathrm{spg}} = 10^{16}$ from the literature. See, for example, \cite{bmgencan}. For the parameter $m$ that limits the effort of the optional extrapolations, we considered $m \in \{0, 5, 10, 15, 20\}$. The case $m=0$ corresponds to no optional extrapolations at all. For the parameter $\epsilon^{\mathrm{\textsc{mr}}}_{\mathrm{ini}}$ that determines the tolerance for the solution of Newtonian linear systems, we considered $\epsilon^{\mathrm{\textsc{mr}}}_{\mathrm{ini}} \in \{ 10^{-1}, 10^{-2}, 10^{-3}, 10^{-4}, \epsilon \}$. Note that when $\epsilon^{\mathrm{\textsc{mr}}}_{\mathrm{ini}} = \epsilon^{\mathrm{\textsc{mr}}}_{\mathrm{end}}=\epsilon$ all Newtonian linear systems are solved to full precision. 

The five options for values of $m$ plus the five options for $\epsilon^{\mathrm{\textsc{mr}}}_{\mathrm{ini}}$ and the two options for the choice of direction $d_1^k$ when MINRES detects a non-positive curvature direction (see Section~\ref{sec311}) leave a total of fifty parameter combinations for Algorithm~P. The best combination of parameters was found by employing irace~\cite{irace}. The irace package implements the Iterated Race method for the automatic tuning of optimization algorithms, given a set of instances of an optimization problem. We used 20\% of the 475 problems as a training set. The problems were selected by ordering the unconstrained and box-constrained problems from smallest to largest by the number of variables, separately, and selecting one in each five in the two ordered sets. The irace package requires a scalar merit function to evaluate the performance of the method whose parameters are being calibrated. We ignored the final value of the objective function and considered CPU time as a performance metric, considering a time of 10 minutes if the stopping criterion of a small projected continuous gradient was not reached. The combination that was identified as the best by irace was $m=20$, $\epsilon^{\mathrm{\textsc{mr}}}_{\mathrm{ini}}=0.1$ and taking $d_1^k$ as the approximate solution $s^k$ (instead of the residue $r^k$) when MINRES detectes a non-positive curvature direction. The values of $m$ and $\epsilon^{\mathrm{\textsc{mr}}}_{\mathrm{ini}}$ coincide with values reported in the literature for similar situations~\cite{bmgencan}. The choice of $d_1^k$ coincides with the results of the preliminary experiments carried out to define Algorithm~P.

It is important to discuss the sensitivity of the method in relation to its parameter and algorithmic choice options. When comparing the $m=0$ and $m=20$ options, we see that the latter finds values of $f$ smaller than $-10^{12}$ (suggesting that the objective function may be unbounded from below) in three more problems (eight versus eleven) and finds better function values in thirty-one problems when considering tolerance $\ftol=0.1$. The variant with $m = 0$ finds values of $f$ equivalent to the best one in 439 problems, while the variant with $m = 20$ does the same in 470 problems. Of the problems in which the two variants found equivalent values of $f$ with tolerance $\ftol = 0.1$, the variant with $m = 0$ is faster in 49\% of the problems, while the variant with $m = 20$ is faster in 54\%. In conclusion, extrapolations increase the effectiveness of the method but have little impact on its average efficiency. Now, examine the options for choosing a search direction when MINRES identifies a non-positive curvature direction. The options are to use the approximate solution found by MINRES as the search direction or to use the residue as the search direction. Both options identify 11 values of $f$ that are smaller than $-10^{12}$. However, the first option identifies 469 values of $f$ that are considered equivalent to the best, while the second option identifies 446. Considering problems in which both options identify equivalent function values, the first option is faster in 57\%, while the second option is faster 45\%. In summary, choosing the approximate solution of the linear system as search direction is a more robust and efficient option. This practical observation contrasts with the fact that the second option guarantees a functional decrease $\mathcal{O}(\epsilon^{3/2})$. Regarding the tolerance required to solve linear Newtonian systems, we highlight the difference between the more relaxed option $\epsilon^{\mathrm{\textsc{mr}}}_{\mathrm{ini}}=0.1$, and the more stringent option $\epsilon^{\mathrm{\textsc{mr}}}_{\mathrm{ini}}=\epsilon^{\mathrm{\textsc{mr}}}_{\mathrm{end}}=\epsilon=10^{-8}$. Surprisingly, the two options produced very similar results. The two variants identified 11 cases in which $f$ appears to be unbounded below and found 470 and 461 better function values, respectively. Taking into account the cases in which they identified equivalent function values, the former variant was faster in 59\% of the cases, while the latter variant was faster in 45\% of the cases. In short, the two variants were very similar, with a slight advantage in robustness and efficiency for the variant in which Newtonian systems are solved with increasing accuracy.

\subsubsection{Evaluation of alternatives in Algorithm~T}

In Algorithm~T we considered $\theta = 0.1$, $\eta = 10^{-8}$, $\rho = 10^{-4}$, $\alpha = 10^{-8}$, and $\gamma = 1$. For the parameter~$m$ that limits the effort of the optional extrapolations, we considered $m \in \{0, 5, 10, 15, 20\}$. For the parameters $\eta_0$ and $\tau$ that determine the tolerance for the solution of Newtonian linear systems, we considered $\eta_0 \in \{ 10^{-1}, 10^{-2}, 10^{-3}, 10^{-4}, \eta \}$ and $\tau \in \{ 0.1, 0.3, 0.5, 0.7, 0.9 \}$. We found the best combination of parameters for Algorithm~T by using irace the same way we used to calibrate the parameters of Algorithm~P. The best configuration returned by irace was $m=20$, $\eta_0 = \eta$, and $\tau = 0.9$. Note that since $\eta_0=\eta$ means that the Newtonian linear systems are solved to full precision by MINRES, the value of $\tau$ has no effect in Algorithm~T.

The influence of the extrapolations in Algorithm~T was very similar to that already reported for Algorithm~P. On the other hand, the tolerance required in the solution of linear Newtonian systems deserves special mention. Here, we consider the variants with $\eta_0=0.1$ and $\eta_0=\eta=\epsilon=10^{-8}$. It should be noted that when the Newton direction is computed with higher precision, there is a higher chance that the search direction produces a functional descent $\mathcal{O}(\epsilon^{3/2})$. Conversely, when the Newton direction is computed with lower precision, there is a greater chance that the search direction will guarantee only a functional descent of order $O(\epsilon^2)$. Consequently, Algorithm~T is forced to make more iterations of both Algorithm~5 and Algorithm~6, resulting in a less efficient method. On the one hand, the variant with $\eta_0=0.1$ proved to be more robust, finding 469 better function values against 441 of the other variant. On the other hand, in the cases where the two variants found function values considered equivalent, the first was faster in 63\% of the cases while the second was faster in 42\% of the cases. In other words, the variant that solves linear Newtonian systems in a relaxed way makes more use of Newton iterations with regularization. This method is considered more robust, though less efficient. Because efficiency was prioritized when choosing parameters with irace, the variant with $\eta_0=\eta$ was selected.

\subsubsection{Algorithm~P versus Algorithm~T}

We end this section by comparing Algorithms~P and~T. Considering the 475 problems, Algorithms~P and~T stopped at the CPU time limit in 25 and 42 problems, found a function value less than or equal to $-10^{12}$ in 11 and 13 problems, and found a point with a gradient sup-norm less than or equal to $\epsilon$ in 401 and 371 problems, respectively. Regardless of this, considering all 475 problems, Table~\ref{tab:spg-cr} shows the comparison of the function values found, and Figure~\ref{fig:spg-cr} compares the efficiency of Algorithms~P and~T in those problems where both found equivalent function values with $\ftol=0.1$. The table shows that Algorithm~P is substantially more robust than Algorithm~T, since it finds a significantly larger number of better solutions, regardless of the tolerance considered to determine that functional values are equivalent. The figure shows that when both methods find equivalent functional values, Algorithm~P is slightly more efficient than Algorithm~T. A comparison with the version of Algorithm~T that uses $\eta_0=0.1$ would show that the algorithms are similar in robustness, but Algorithm~P is much more efficient.

\begin{table}[ht!]
\begin{center}
{\small
\begin{tabular}{ccccccccc}
\cline{2-9}
& \multicolumn{8}{c}{$\ftol$} \\ 
\hline
& $0.1$ & $10^{-2}$ & $10^{-3}$ & $10^{-4}$ & $10^{-5}$ & $10^{-6}$ & $10^{-7}$ & $10^{-8}$ \\
\hline
Algorithm~P & 465 & 461 & 457 & 454 & 451 & 450 & 447 & 441 \\
Algorithm~T & 412 & 399 & 388 & 380 & 377 & 369 & 369 & 369 \\
\hline
\end{tabular}}
\end{center}
\caption{Number of solutions equivalent to the best solution found by Algorithms~P and~T, as a function of the tolerance $\ftol \in \{10^{-1}, 10^{-2} ,\dots,10^{-8}\}$, considering all the 475 unconstrained problems and bound-constrained problems from the CUTEst collection.}
\label{tab:spg-cr}
\end{table}
\begin{figure}[ht!]
\begin{center}
\scalebox{1.0}{\input{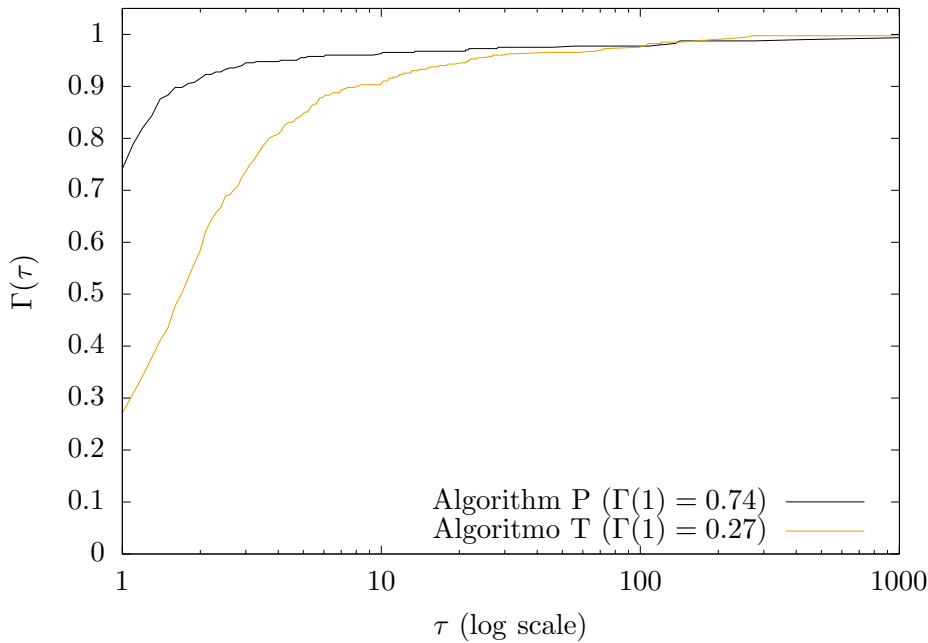}}
\end{center}
\caption{Performance profiles comparing the efficiency of Algorithms~P and~T on the 402 problems where the two methods found equivalent objective function values with tolerance $\ftol = 0.1$.} 
\label{fig:spg-cr}
\end{figure}

\subsection{Comparision of Algorithm~P and Gencan} \label{sec33}

In this section we compare Algorithm~P with Gencan (included in Algencan 3.1.1 and freely available at \url{http://www.ime.usp.br/~tango/}). Gencan is an active set method for bound-constrained minimization, introduced in~\cite{bmgencan}. Algencan~\cite{abmstango,bmbook,bmcomper}, an augmented Lagrangian method for nonlinear programming, uses Gencan to solve its subproblems. Gencan is an active set method whose general framework is exactly the same as that described by Algorithm~1. That is, it uses exactly the same criteria as Algorithm~P to decide whether the next iteration should be within the current face or whether the current face should be abandoned. If the current face should be abandoned, Gencan also uses an SPG iteration as described in Algorithm~3. For iterations within a face, when matrix-factorizations are not allowed, Gencan, like Algorithm~P, uses truncated Newton with line search. The difference is that Newtonian linear systems are solved with conjugate gradients. In conjugate gradients, if Hessians are not available, the Hessian vector products are approximated by differences of gradients. In the present work, we are assuming that Hessians are available and, therefore, Gencan as well as Algorithms~P and~T use true Hessian vector products in conjugate gradients and MINRES. Gencan's truncated Newton inspired and shares with Algorithm~2 the way to calculate the tolerance with which linear systems should be solved, what to do when a non-positive curvature direction is detected, and how to decide whether to attempt extrapolations or not. That is, the only relevant difference between Gencan and Algorithm~P is that the former uses conjugate gradients and the latter uses MINRES to solve Newtonian linear systems. It is important to mention that the comparison presented in~\cite{bg} ranked Gencan among the most efficient and robust methods for bound-constrained minimization, in a comparison that included ASA-CG~\cite{asacg}, Ipopt~\cite{ipopt}, Lancelot~B~\cite{Conn1992}, L-BFGS-B~\cite{lbfgsb3}, SPG~\cite{Birgin2000} and fmincon~\cite{Coleman1994,Coleman1996}.

We run Gencan with all its default parameters and the same stopping criterion already mentioned for Algorithms~P and~T, i.e., $\| \nabla_{\Omega} f(x) \|_{\infty} \leq \epsilon$ with $\epsilon = 10^{-8}$. Considering the 475 problems, Algorithm~P and Gencan stopped at the CPU time limit in 25 and 33 problems, found a function value less than or equal to $-10^{12}$ in 11 and 12 problems, and found a point with a gradient sup-norm less than or equal to $\epsilon$ in 401 and 364 problems, respectively. Regardless of this, considering all the 475 problems, Table~\ref{tab:spg-cg} shows the comparison of the function values found, and Figure~\ref{fig:spg-cg} compares the efficiency of the two variants in those problems where both found equivalent function values with $\ftol=0.1$. The table and the figure show that Algorithm~P is slightly more robust and significantly more efficient than Gencan. When we consider only the 313 unconstrained problems in the CUTEst collection, the results are qualitatively equivalent to those shown in \cite{Liu2023}. In that study, the Newton-MR method, from which Algorithms P and T originated, was found to be more robust and efficient than several variations of Newton's method that use conjugate gradients to solve Newtonian linear systems.

\begin{table}[ht!]
\begin{center}
{\small
\begin{tabular}{ccccccccc}
\cline{2-9}
& \multicolumn{8}{c}{$\ftol$} \\ 
\hline
& $0.1$ & $10^{-2}$ & $10^{-3}$ & $10^{-4}$ & $10^{-5}$ & $10^{-6}$ & $10^{-7}$ & $10^{-8}$ \\
\hline
Algorithm~P & 462 & 453 & 448 & 443 & 438 & 436 & 435 & 428 \\
Gencan      & 447 & 445 & 437 & 430 & 427 & 423 & 422 & 416 \\
\hline
\end{tabular}}
\end{center}
\caption{Number of solutions equivalent to the best solution found by Algorithm~P and Gencan, as a function of the tolerance $\ftol \in \{10^{-1}, 10^{-2} ,\dots,10^{-8}\}$, considering all the 475 unconstrained problems and bound-constrained problems from the CUTEst collection.}
\label{tab:spg-cg}
\end{table}

\begin{figure}[ht!]
\begin{center}
\scalebox{1.0}{\input{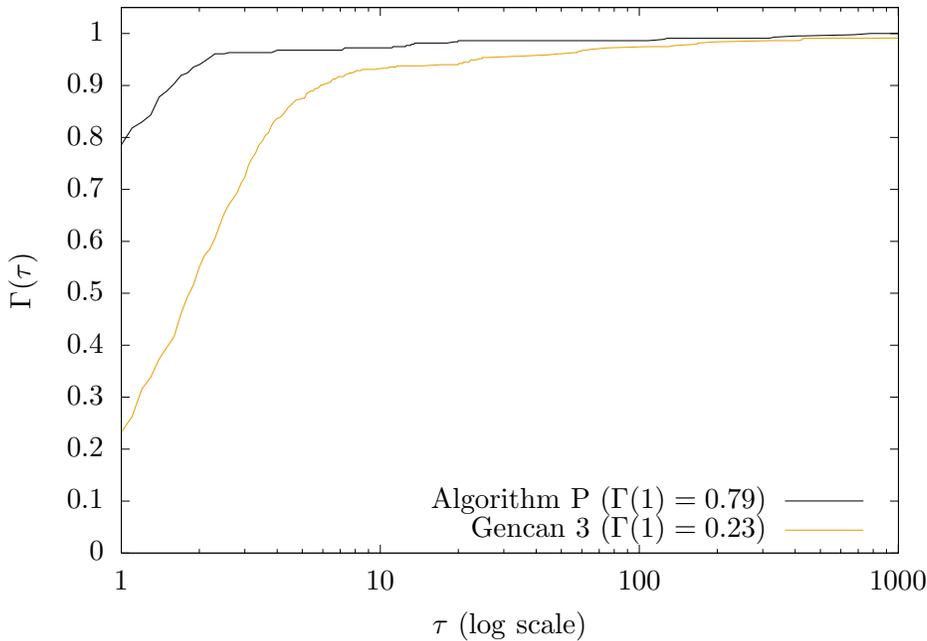}}
\end{center}
\caption{Performance profiles comparing the efficiency of Algorithm~P and Gencan on the 434 problems where the two methods found equivalent objective function values with tolerance $\ftol = 0.1$.} 
\label{fig:spg-cg}
\end{figure}

\section{Conclusion} \label{sec4}

Recent work has analyzed the practical and theoretical properties of the well-known MINRES method for solving linear systems, particularly in the context of a truncated Newton method (Newton-MR) for unconstrained minimization. In this paper, we extended the Newton-MR method in two distinct ways. In one approach, we preserved the worst-case complexity of $\mathcal{O}(\epsilon^{-3/2})$ exhibited by Newton-MR for unconstrained minimization. In the other approach, inspired by Gencan and guided by numerical evaluations of various alternatives, we developed an extension of Newton-MR for bound-constrained minimization with worst-case complexity of $\mathcal{O}(\epsilon^{-2})$. Numerical experiments demonstrated that the latter approach is more robust and efficient than the former, when considering both unconstrained and bound-constrained problems from the CUTEst collection. A similar conclusion is reached when only unconstrained problems are considered. On the one hand, it can be argued that worst-case complexity does not always accurately reflect a method's practical performance. On the other hand, it is important to note that the method with lower complexity requires stronger assumptions than the method with higher complexity. These stronger assumptions are difficult to verify in practice. The best of the two methods was also compared with Gencan, a method with similar characteristics but that solves linear systems using conjugate gradients. The new method proved to be more robust and efficient. In future work, it remains to be seen whether this advantage holds when the method is used to solve subproblems in an augmented Lagrangian method. 

\bibliographystyle{plain}

\bibliography{bgm2025}

\end{document}